\documentclass[12pt,reqno,final,a4paper]{article}

\setlength{\hoffset}{-2cm}
\setlength{\voffset}{-1.3cm}
\setlength{\textwidth}{15.75cm}
\setlength{\textheight}{22.275cm}

\usepackage{hyperref}
\usepackage{amsmath}
\usepackage{amsthm}
\usepackage{amssymb}
\usepackage{dsfont}
\usepackage{stmaryrd}
\usepackage{commath}
\usepackage{mathrsfs}
\usepackage{enumitem}
\usepackage{graphicx}
\evensidemargin0.5cm
\font\msbm=msbm10

\numberwithin{equation}{section}

\theoremstyle{plain}
\newtheorem{theorem}{Theorem}[section]
\newtheorem{lemma}[theorem]{Lemma}
\newtheorem{corollary}[theorem]{Corollary}
\newtheorem{proposition}[theorem]{Proposition}

\theoremstyle{definition}

\newtheorem{remark}[theorem]{Remark}
\def\mathbb#1{\hbox{\msbm{#1}}}

\newcommand{\field}[1]{\ensuremath{\mathds{#1}}}

\newcommand{\N}{\field N}

\newcommand{\R}{\field R}

\newcommand{\supp}{{\rm  supp \, }}
\newcommand{\sign}{{ \ensuremath{\mbox{\rm sign} \,} }}

\renewcommand{\>}{\rangle}

\newcommand{\be}{\begin{equation}}
\newcommand{\ee}{\end{equation}}
\newcommand{\beq}{\begin{eqnarray}}
\newcommand{\beqq}{\begin{eqnarray*}}
\newcommand{\eeq}{\end{eqnarray}}
\newcommand{\eeqq}{\end{eqnarray*}}

\newcommand{\1}{\mathds{1}}

\DeclareMathOperator{\err}{err}

\newcommand{\Lip}{\ensuremath{\mathrm{Lip}}}
\newcommand{\Rs}[1]{\ensuremath{\mathcal R_d^{#1}}}
\newcommand{\Sparse}{\ensuremath{\mathfrak S^{d-1}}}
\newcommand{\Sphere}{\ensuremath{\mathds S^{d-1}}}

\newcommand{\AlgAdap}{\ensuremath{\mathcal S^{\text{ada}}}}
\newcommand{\Alg}{\ensuremath{\mathcal S}}


\usepackage{ifdraft}
\usepackage[colorinlistoftodos, linecolor=red,bordercolor=red, backgroundcolor=red,textsize=small,textwidth=3cm]{todonotes}
\usepackage{xcolor}
\usepackage{setspace}
\usepackage[normalem]{ulem}
\usepackage{comment}
\usepackage{framed}

\ifdraft{\usepackage{showkeys}}{}

\newcounter{todocounter}
\newenvironment{todobox}[3][]
{\addtocounter{todocounter}{1}\todo[caption={\protect\hypertarget{todo\thetodocounter}{}#2 #3},noline, #1]{#2 \hyperlink{todo\thetodocounter}{$\uparrow$}}\color{red}\textbf{#3} }
{\color{black}}




\specialcomment{help}
{
\section*{Help}
}
{}
\ifdraft{}{\excludecomment{help}}

\specialcomment{changes}
{
\section*{Changes}
}
{\listoftodos[TODO's etc.]}
\ifdraft{}{\excludecomment{changes}}

\specialcomment{detail}
{\textcolor{blue!50!black}{/** }\begingroup\color{blue!50!black}}
{\endgroup\textcolor{blue!50!black}{**/} }
\ifdraft{}{\excludecomment{detail}}

\newcommand{\old}[1]{\textcolor{red}{\sout{\ifdraft{#1}{}}}}
\newcommand{\new}[1]{\ifdraft{\textcolor{green!50!black}{#1}}{#1}}


\hypersetup{
   pdfauthor={Sebastian Mayer, Tino Ullrich, and Jan Vyb\'iral},
   pdfkeywords={ridge functions, entropy numbers, sampling, information-based complexity},
   pdftitle={Approximation properties of ridge functions}
}

\begin{document}

\title{Entropy and sampling numbers of classes of ridge functions}

\author{Sebastian Mayer$^a$, Tino Ullrich$^{a,}$\footnote{Corresponding author. Email: tino.ullrich@hcm.uni-bonn.de, Tel: +49 228 73 62224}, and Jan Vyb\'iral$^b$}

\maketitle

\vspace{-1cm}
\begin{center}
$^a$ Hausdorff-Center for Mathematics, Endenicher Allee 62, 53115 Bonn, Germany\\
$^b$ Department of Mathematics, Technical University Berlin, Stra{\ss}e des 17. Juni 136, 10623 Berlin, Germany\end{center}

\begin{abstract}
We study properties of ridge functions $f(x)=g(a\cdot x)$ in high dimensions $d$ from the viewpoint of approximation theory. The considered function classes consist of ridge functions such that the profile $g$ is a member of a univariate Lipschitz class with smoothness $\alpha > 0$ (including infinite smoothness), and the ridge direction $a$ has $p$-norm $\|a\|_p \leq 1$. First, we investigate entropy numbers in order to quantify the compactness of these ridge function classes in $L_{\infty}$. We show that they are essentially as compact as the class of univariate Lipschitz functions. Second, we examine sampling numbers and face two extreme cases. In case $p=2$, sampling ridge functions on the Euclidean unit ball faces the curse of dimensionality. It is thus as difficult as sampling general multivariate Lipschitz functions, a result in sharp contrast to the result on entropy numbers. When we additionally assume that all feasible profiles have a first derivative uniformly bounded away from zero in the origin, 
then the complexity of sampling ridge functions reduces drastically to the complexity of sampling univariate Lipschitz functions.
In between, the sampling problem's degree of difficulty varies, depending 
on the values of $\alpha$ and $p$. Surprisingly, we see almost the entire hierarchy of 
tractability levels as introduced in the recent monographs by Novak and Wo\'zniakowski.
\end{abstract}

\begin{help}

\begin{itemize}
 \item \new{Piece of text which is new in this version.}
 \item \old{Piece of text that should be removed for some reason.}
 \item \textcolor{blue!50!black}{/** more detailed descriptions; internal comments; $\dots$ **/}
 \item Remove the \texttt{draft} option to hide working draft annotations.
\end{itemize}

\end{help}

\begin{changes}
Changes to draft v11.
\begin{itemize}
\item Introduced the class of adaptive algorithms. Lower bounds in Section \ref{sec:sampling} now formulated in terms of adaptive algorithms.
\item I checked the rules how to use commas in English language. Hopefully, everything is correct know. To get a better feeling, I additionally compared with text books written by native English speakers.
\item At several points, I tried to make sentences more concise.
\end{itemize}
\end{changes}

\section{Introduction}
\old{Complex systems are nowadays ubiquitous in real-world applications. Quite often,}Functions depending on a large number (or even infinitely many) variables naturally appear in many real-world applications\old{when modelling these systems}. Since analytical representations are rarely available, there is a need to compute approximations to such functions or at least functionals thereof. Examples include parametric and stochastic PDEs \cite{CDMR, Schwab11}, data analysis and learning theory \cite{BvG,CZ,HTF}, quantum chemistry \cite{FHKS}, and mathematical finance \cite{PT}.

It is a very well-known fact that approximation of smooth multivariate functions in many cases suffers from the so-called \emph{curse of dimensionality}.
Especially, for fixed smoothness, the order of approximation decays rapidly with increasing dimension \cite{DL,lomavo96}.
Actually, a recent result \cite{NW_2009_2} from the area of \emph{information-based complexity} states that on the unit cube, even uniform approximation of infinitely differentiable functions is intractable in high dimensions. These results naturally lead to the search for other assumptions than smoothness which would allow for tractable approximation, but would still be broad enough to include real-world applications. There are many different conditions of this kind.  Usually, they require additional structure; for example, that the functions under consideration are tensor products or belong to some sort of weighted function space.
We refer to \cite{NW1, TWW} and \cite{NW2} for a detailed discussion of (in)tractability of high-dimensional problems.

In this work, we are interested in functions which take the form of a \emph{ridge}. This means that we look at functions where each $f$ is constant along lines perpendicular to some specific direction, say $a$. In other words, the function is of the form $f(x)=g(a\cdot  x)$, where $g$ is a univariate function called the profile. Ridge functions provide a simple, coordinate-independent model, which describes inherently one-dimensional structures hidden in a high-dimensional ambient space.

That the unknown functions take the form of a ridge is a frequent assumption in statistics, for instance, in the context of \emph{single index models}. For several of such statistical problems, minimax bounds have been studied on the basis of algorithms which exploit the ridge structure \cite{G,HJS,RWY}. Another point of view on ridge functions, which has attracted attention for more than 30 years,  is to approximate \emph{by} ridge functions. An early work in this direction is \cite{LS75}, which took motivations from computerized tomography, and in which the term ``ridge function'' was actually coined.  Another seminal paper is \cite{FrSt81}, which introduced \emph{projection pursuit regression} for data analysis. More recent works include the mathematical analysis of neural networks \cite{C2,Pinkus:NeuralNetworks}, and wavelet-type analysis  \cite{CD}. For a survey on further approximation-theoretical results, we refer the reader to \cite{Pinkus:ApproxRidge}.

For classical setups in statistics and data analysis, it is typical that we have no influence on the choice of sampling points. In contrast, problems of \emph{active learning} allow to \new{\emph{freely} choose} a limited number of samples from which to recover the function. Such a situation occurs, for instance, if sampling the unknown function at a point is realized by a (costly) PDE solver. In this context, ridge functions have appeared only recently as function models. The papers \cite{CT,CDDKP,FSV12} provide several algorithms and upper bounds for the approximation error.

We continue in the direction of active learning, addressing two questions concerning the approximation of ridge functions. First, we ask how ``complex''  the classes of ridge functions are compared to uni- and multivariate Lipschitz functions. We measure complexity in terms of \emph{entropy numbers}, a classical concept in  approximation theory. Second, we ask how hard it is to approximate ridge functions having only function values as information. Here, especially lower bounds are of interest to us. We formulate our results in terms of \emph{sampling numbers}. It should be pointed out, however, that we use a broader notion of sampling numbers than classical approximation theory does. As in the classical sense, we also consider a \emph{worst-case} setting with error measured in $L_{\infty}$. But sampling points may be chosen \emph{adaptively}.

Both for entropy and sampling numbers, we consider classes of ridge functions defined on the $d$-dimensional Euclidean unit ball. These classes are characterized by three parameters: the profiles' order of Lipschitz smoothness $\alpha > 0$ (including infinite smoothness $\alpha=\infty$); a norm parameter $0 < p \leq 2$ indicating the $\ell_p^d$-ball in which ridge directions must be contained; and a parameter $0 \leq \kappa \leq 1$ to impose the restriction $|g'(0)| \geq \kappa$ on the first derivative of all feasible profiles $g$ (of course, this last parameter makes only sense in case of $\alpha > 1$).

Regarding entropy numbers, the considered ridge function classes show a very uniform behaviour. For all possible parameter values, it turns out that they are essentially 
as compact as the class of univariate Lipschitz functions of the same order. For the sampling problem on the contrary, we find a much more diverse picture. 
On a first glance, the simple structure of ridge functions misleads one into thinking that approximating them should not be too much harder than approximating 
a univariate function. But this is far from true in general. Actually, in our specific setting, the sampling problem's degree of difficulty crucially depends 
on the constraint $|g'(0)| \geq \kappa$. If $\kappa > 0$, then it becomes possible to first recover the ridge direction efficiently. What remains then is only 
the one-dimensional problem of sampling the profile. In this scenario, the ridge structure indeed has a sweeping impact and the sampling problem is \emph{polynomially tractable}. But without the constraint on first 
derivatives and when all vectors in the domain may occur as ridge direction ($p=2$), sampling of ridge functions is essentially as hard as sampling of general Lipschitz functions over the same domain.
It even suffers from the \emph{curse of dimensionality}, as long as we have only finite smoothness of profiles. For other configurations of the parameters $\alpha$ and $p$, the sampling problem's level 
of difficulty varies in between the extreme cases of polynomial tractability and curse of dimensionality. Surprisingly, we obtain almost the entire spectrum of degrees of tractability as introduced 
in the recent monographs by Novak and Wo\'zniakowski.

The work is organized as follows. In Section \ref{sec:prelim}, we define the setting in a precise way and introduce central concepts. Section \ref{sec:entropy} then is dedicated to the study of entropy numbers for the considered function classes. Lower and upper bounds on sampling numbers are found in Section \ref{sec:sampling}. Finally, in Section \ref{sec:tractability}, 
we interpret our findings on sampling numbers in the language of information based-complexity.

\section{Preliminaries}
\label{sec:prelim}
When $X$ denotes a (quasi-)Banach space of functions,
equipped with the (quasi-)norm $\|\cdot\|_X$, we write $B_X = \{ f \in X: \; \|f\|_X < 1\}$ for the open unit ball and 
$\bar B_X$ for its closure. In case that $X=\ell_p^d(\R) = (\R,\|\cdot\|_p)$ we additionally use the notation $B_p^d$ for the open unit ball and 
$\Sphere_p$ for the unit sphere in $\ell_p^d$.

\subsection{Ridge function classes}
The specific form of ridge functions suggests to describe a class of such functions in terms of two parameters: one to determine the smoothness of profiles, the other to restrict the norm of ridge directions.

Regarding smoothness, we require that ridge profiles are Lipschitz of some order. For the reader's convenience, let us briefly recall this notion. 
Let $\Omega\subset \R^d$ be a bounded domain and $s$ be a natural number. The function space $C^{s}(\Omega)$ consists of those functions over the domain $\Omega$ which have partial derivatives up to order $s$ in the interior $\mathring{\Omega}$ of $\Omega$, and these derivatives are moreover bounded 
and continuous in $\Omega$. Formally,
\[
 C^s(\Omega) = \big\{ f: \Omega \to \R: \quad \|f\|_{C^s} := \max_{ |\gamma| \leq s} \|D^{\gamma} f\|_{\infty} < \infty \big\}, 
\]
where, for any multi-index $\gamma = (\gamma_1,\dots,\gamma_d) \in \N_0^d$, the partial differential operator $D^{\gamma}$ is given by 
\[
 D^{\gamma}f := \frac{\partial^{|\gamma|} f}{\partial x_1^{\gamma_1} \cdots \partial x_d^{\gamma_d}}\,.
\]
Here we have written $|\gamma| = \sum_{i=1}^d \gamma_i$ for the order of $D^{\gamma}$. For the vector of first derivatives we use the usual notation $\nabla f = (\partial f/\partial x_1,\dots, \partial f/\partial x_d)$. Beside $C^s(\Omega)$ we further need the space of infinitely differentiable
functions $C^{\infty}(\Omega)$ defined by 
\be\label{f62}
 C^{\infty}(\Omega) = \big\{ f: \Omega \to \R: \quad \|f\|_{C^{\infty}} := \sup_{ \gamma \in \N_0^d} \|D^{\gamma} f\|_{\infty} < \infty \big\}\,.
\ee

For a function $f:\Omega\to\R$ and any positive number $0 < \beta \leq 1$, the \emph{H\"older constant} of order $\beta$ is given by
\begin{equation}\label{beta}
|f|_{\beta}:=\sup_{\substack{x,y\in\Omega\\ x\not=y}}\frac{|f(x)-f(y)|}{2\min\{1, \|x-y\|_1\}^{\beta}}\;.
\end{equation} 
This definition immediately implies the relation 
\be\label{f64}
  |f|_{\beta} \leq |f|_{\beta'}\mbox{ if } 0<\beta < \beta' \leq 1.
\ee
Now, for any $\alpha >0$, we can define the \emph{Lipschitz space} $\Lip_{\alpha}(\Omega)$. If we let $s=\llfloor \alpha \rrfloor$ 
be the largest integer \emph{strictly less} than $\alpha$, it contains those functions in $C^s(\Omega)$  which have partial derivatives of order $s$ which are moreover H\"older-continuous of order $\beta = \alpha - s>0$. Formally,
\[
 \Lip_{\alpha}(\Omega) = \big\{ f \in C^s(\Omega): \quad \|f\|_{\Lip_{\alpha}(\Omega)} := \max \{ \|f\|_{C^s}, \ \max_{|\gamma| = s} |D^{\gamma}f|_{\beta} \} < \infty \big\}.
\]
For $s\in \N_0$ and $1 \geq \beta_2>\beta_1 > 0$ the following embeddings hold true 
\be\label{f60}
 C^{\infty}(\Omega) \subset \Lip_{s+\beta_2}(\Omega) \subset \Lip_{s+\beta_1}(\Omega) \subset C^s(\Omega) \subset \Lip_s(\Omega)\,,
\ee
where the respective identity operators are of norm one. In other words, the respective unit balls satisfy the same relation. Note that the fourth inclusion only makes sense if $s\geq 1$.  The third embedding is a trivial consequence of the definition. The second embedding follows from
the third, \old{the fourth,} and \eqref{f64}. The fourth embedding and the second imply the first. So it remains to establish the fourth embedding. 
We have to show that for every $\gamma \in \N_0^d$ with $|\gamma| = s-1$ it holds $|D^{\gamma}f|_1 \leq \|f\|_{C^s}$\,. On the one hand, Taylor's formula in $\R^d$ gives
for some $0<\theta<1$
\be\nonumber
  \begin{split}
    |D^{\gamma}f(x)-D^{\gamma}f(y)| &= |\nabla (D^{\gamma}f)(x+\theta(y-x))\cdot (x-y)| \\
    &\leq \max_{|\beta| = s}\|D^{\beta}f\|_{\infty}\cdot \|x-y\|_1\\
    &\leq \|f\|_{C^s}\|x-y\|_1\,.
  \end{split}  
\ee
On the other hand, we have $|D^{\gamma}f(x)-D^{\gamma}f(y)| \leq 2\|f\|_{C^s}$\,. Both estimates together yield 
$|D^{\gamma}f|_1 \leq \|f\|_{C^s}$\,.

Having  introduced Lipschitz spaces, we can give a formal definition of our ridge functions classes. For the rest of the paper, we fix as function domain the closed unit ball 
$$
    \Omega  = \bar B_2^d = \{x\in\R^d~:~\|x\|_2 \leq 1\}.
$$
As before, let $\alpha > 0$ denote the order of Lipschitz smoothness. Further, let $0 < p \leq 2$. We define the class of ridge functions with Lipschitz profiles as
\begin{align}\label{def_ridge}
 \Rs{\alpha,p} = \left\lbrace f: \Omega \to \R \; : \; f(x) = g(a\cdot x), \; \|g\|_{\Lip_{\alpha}[-1,1]} \leq 1 ,\; \|a\|_p \leq 1  \right\rbrace.
\end{align}
In addition, we define the class of ridge functions with infinitely differentiable profiles by
$$
 \Rs{\infty,p} = \left\lbrace f: \Omega \to \R \; : \; f(x) = g(a\cdot x), \; \|g\|_{C^{\infty}[-1,1]} \leq 1 ,\; \|a\|_p \leq 1  \right\rbrace.
$$

\noindent Let us collect basic properties of these classes.

\begin{lemma}\label{emb}
For any $\alpha > 0$ and $0 < p \leq 2$ the class $\Rs{\alpha,p}$ is contained in $\bar B_{\Lip_{\alpha}(\Omega)}$ and $\Rs{\infty,p}$ is 
contained in $\bar B_{C^{\infty}(\Omega)}$.
\end{lemma}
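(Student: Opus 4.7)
The statement is a chain-rule type bookkeeping result, so my plan is purely computational. Let me sketch it in four short steps.

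\textbf{Step 1: Admissibility of the argument $a\cdot x$.} Since $0<p\le 2$, the monotonicity of finite-dimensional $\ell_p$-norms yields $\|a\|_2\le\|a\|_p\le 1$ and hence also $\|a\|_\infty\le 1$. For any $x\in\Omega=\bar B_2^d$, Cauchy--Schwarz gives $|a\cdot x|\le\|a\|_2\|x\|_2\le 1$, so $g(a\cdot x)$ is well-defined as $g$ lives on $[-1,1]$.

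\textbf{Step 2: Bounding $\|f\|_{C^s(\Omega)}$.} For a multi-index $\gamma$ with $|\gamma|\le s=\llfloor\alpha\rrfloor$, repeated application of the chain rule gives $D^\gamma f(x)=a^\gamma\,g^{(|\gamma|)}(a\cdot x)$, where $a^\gamma=a_1^{\gamma_1}\cdots a_d^{\gamma_d}$. Since $|a^\gamma|\le\|a\|_\infty^{|\gamma|}\le 1$, we obtain $\|D^\gamma f\|_\infty\le\|g^{(|\gamma|)}\|_\infty$ and therefore $\|f\|_{C^s(\Omega)}\le\|g\|_{C^s[-1,1]}$.

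\textbf{Step 3: Bounding the Hölder seminorm.} For $|\gamma|=s$ and $\beta=\alpha-s\in(0,1]$ we compute
\[
|D^\gamma f(x)-D^\gamma f(y)|=|a^\gamma|\cdot|g^{(s)}(a\cdot x)-g^{(s)}(a\cdot y)|\le |g^{(s)}|_\beta\cdot 2\min\{1,|a\cdot(x-y)|\}^\beta.
\]
The key (and only nontrivial) estimate is
\[
|a\cdot(x-y)|\le\|a\|_\infty\|x-y\|_1\le\|x-y\|_1,
\]
which lets us replace the inner distance by $\|x-y\|_1$ inside the $\min$. Dividing by $2\min\{1,\|x-y\|_1\}^\beta$ and taking the supremum over $x\ne y$ gives $|D^\gamma f|_\beta\le|g^{(s)}|_\beta$. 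Combined with Step~2, this yields $\|f\|_{\Lip_\alpha(\Omega)}\le\|g\|_{\Lip_\alpha[-1,1]}\le 1$, i.e.\ $f\in\bar B_{\Lip_\alpha(\Omega)}$.

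\textbf{Step 4: The $C^\infty$ case.} Exactly the same chain-rule identity $D^\gamma f(x)=a^\gamma g^{(|\gamma|)}(a\cdot x)$ together with $|a^\gamma|\le 1$ works for every $\gamma\in\N_0^d$ without any size restriction, giving $\sup_\gamma\|D^\gamma f\|_\infty\le\sup_k\|g^{(k)}\|_\infty=\|g\|_{C^\infty[-1,1]}\le 1$, hence $f\in\bar B_{C^\infty(\Omega)}$. There is no real obstacle in this lemma; the only thing one must be careful about is matching the metric used in the definition~\eqref{beta} (the $\ell_1$-distance) with the one induced by $a$ on the profile's argument, which is precisely what the inequality $\|a\|_\infty\le 1$ supplies.
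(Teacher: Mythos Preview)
Your proof is correct and follows essentially the same approach as the paper: the chain-rule identity $D^\gamma f(x)=a^\gamma g^{(|\gamma|)}(a\cdot x)$ together with $|a^\gamma|\le\|a\|_\infty^{|\gamma|}\le 1$ handles the $C^s$ and $C^\infty$ parts, and the estimate $|a\cdot(x-y)|\le\|a\|_\infty\|x-y\|_1\le\|x-y\|_1$ controls the H\"older seminorm. The only cosmetic difference is that the paper writes $\|a\|_p$ rather than $\|a\|_\infty$ in the intermediate bounds, and it does not spell out your Step~1 on the admissibility of $a\cdot x$.
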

\begin{proof}
Let $f\in \Rs{\alpha,p}$ and $s=\llfloor\alpha\rrfloor$. Furthermore, let $\gamma \in \N_0^d$ be such that $|\gamma| \leq s$. Then, there exists $g\in \Lip_{\alpha}([-1,1])$ with
$$
    D^{\gamma}f(x) = D^{|\gamma|}g(a\cdot  x)a^{\gamma}\,,\quad x\in \Omega\,,
$$
where we used the convention $a^\gamma = \prod_{i=1}^d a_i^{\gamma_i}$\,.
Therefore, we have  
$$
  \|D^{\gamma} f\|_{\infty} \leq \|D^{|\gamma|} g\|_{\infty}\|a\|_{\infty}^{|\gamma|} \leq \|a\|_p^{|\gamma|} \leq 1\,. 
$$
If we let $s \to \infty$ this immediately implies $\Rs{\infty,p} \subset \bar B_{C^{\infty}(\Omega)}$.
Moreover, if $|\gamma| = s$ and $\beta = \alpha-s$ we obtain by H\"older's inequality for $x,y \in \Omega$
\begin{equation}\nonumber
  \begin{split}
    |D^{\gamma}f(x)-D^{\gamma}f(y)| &= |a^{\gamma}|\cdot|D^sg(a\cdot x)-D^sg(a\cdot y)|\\ 
    &\leq \|a\|_p^s\cdot |D^sg|_{\beta}\cdot 2\min\{1,\|a\|_p\cdot \|x-y\|_1\}^{\beta}\\
    &\leq 2\min\{1,\|x-y\|_1\}^{\beta}\,.
  \end{split}
\end{equation}
Consequently, we have $\|f\|_{\Lip_{\alpha}(\Omega)} \leq 1$ and hence $\Rs{\alpha,p} \subset \bar B_{\Lip_{\alpha}(\Omega)}$.
\end{proof}

\noindent Note that in the special case $\alpha=1$, we have Lipschitz-continuous profiles. Whenever $0<\alpha_1 < \alpha_2 \leq \infty$, we have $\Rs{\alpha_2, p} \subset \Rs{\alpha_1,p}$, which is an immediate consequence of \eqref{f60}. Likewise, for $p < q$ we have the relation $\Rs{\alpha, p} \subset \Rs{\alpha, q}$.

Finally, for Lipschitz smoothness $\alpha > 1$, we want to introduce a restricted version of $\Rs{\alpha,p}$, where profiles obey the additional constraint $|g'(0)| \geq \kappa > 0$. We define
\begin{align}
 \Rs{\alpha,p,\kappa} =  \{ g(a\cdot) \in \Rs{\alpha,p}: \quad |g'(0)| \geq \kappa\}.
\end{align}
Whenever we say in the sequel that we consider ridge functions with first derivatives bounded away from zero in the origin, we mean that they are contained in the class $\Rs{\alpha,p,\kappa}$ for some $0 < \kappa \le 1$.

\paragraph{Taylor expansion.}
We introduce a straight-forward, multivariate extension of Taylor's expansion on intervals to ridge functions in
$\Rs{\alpha,p}$ and functions in $\Lip_\alpha(\Omega)$. For $x,x^0 \in \mathring{\Omega}$ we define the function $\Phi_x(\cdot)$ by 
$$
    \Phi_x(t) := f(x^0+t(x-x^0))\,,\quad t\in [0,1]\,.
$$

\begin{lemma} Let $\alpha>1$ and $\alpha = s+\beta$, $s\in \N$, $0<\beta\leq 1$. Let further $f\in \Lip_{\alpha}(\Omega)$ and $x,x^0 \in \mathring{\Omega}$. Then there is a real number $\theta \in (0,1)$ such that 
$$
  f(x) = T_{s,x^0}f(x) + R_{s,x^0}f(x)\,,
$$ 
where the Taylor polynomial $T_{s,x^0}f(x)$ is given by
\begin{equation}
  T_{s,x^0}f(x) = \sum\limits_{j=0}^s \frac{\Phi_x^{(j)}(0)}{j!} = \sum\limits_{|\gamma| \leq s} \frac{D^{\gamma}f(x^0)}{\gamma!}(x-x^0)^{\gamma}
\end{equation}
and the remainder 
\begin{align}\label{remainder1}
  R_{s,x^0}f(x) &= \frac{1}{s!}\Big(\Phi_x^{(s)}(\theta)-\Phi_x^{(s)}(0)\Big)\\ 
  \label{remainder2}&= \sum\limits_{|\gamma| = s} \frac{D^{\gamma}f(x^0+\theta(x-x^0))-D^{\gamma}f(x^0)}{\gamma!}(x-x^0)^{\gamma}\,.
\end{align}

\end{lemma}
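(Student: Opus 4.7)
The argument reduces to one variable: restrict $f$ to the line segment from $x^0$ to $x$ by setting $\Phi_x(t)=f(x^0+t(x-x^0))$, apply the classical univariate Taylor theorem with Lagrange remainder to $\Phi_x$ on $[0,1]$, and then translate the derivatives $\Phi_x^{(j)}(0)$ and $\Phi_x^{(s)}(\theta)$ back into multivariate language via the chain rule. For the setup, since $\Omega=\bar B_2^d$ is convex and $x^0,x\in\mathring\Omega$, the whole segment $\{x^0+t(x-x^0):t\in[0,1]\}$ lies in $\mathring\Omega$. By assumption $f\in\Lip_\alpha(\Omega)\subset C^s(\Omega)$, hence every partial derivative $D^\gamma f$ with $|\gamma|\le s$ is bounded and continuous on $\Omega$; in particular $\Phi_x\in C^s([0,1])$.

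Next I would establish, by induction on $j$, the chain-rule identity
\[
\Phi_x^{(j)}(t) \;=\; \sum_{|\gamma|=j}\frac{j!}{\gamma!}\,D^{\gamma}f\bigl(x^0+t(x-x^0)\bigr)\,(x-x^0)^{\gamma},\qquad 0\le j\le s.
\]
The base case $j=0$ is the definition of $\Phi_x$; the inductive step uses $\partial_t\,D^{\gamma}f(x^0+t(x-x^0))=\sum_{k=1}^d (x_k-x^0_k)\,D^{\gamma+e_k}f(x^0+t(x-x^0))$ together with the multinomial bookkeeping $\sum_{k}\frac{j!}{\gamma!}=\frac{(j+1)!}{(\gamma+e_k)!}\cdot(\gamma_k+1)$, grouped so that each multi-index of length $j+1$ receives the correct coefficient $(j+1)!/\gamma!$. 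This is the only step requiring genuine computation and is the main (mild) obstacle in the proof.

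Having \eqref{remainder1} in mind, I would now invoke the one-dimensional Taylor theorem with Lagrange remainder for $\Phi_x\in C^s([0,1])$: there exists $\theta\in(0,1)$ with
\[
\Phi_x(1) \;=\; \sum_{j=0}^{s-1}\frac{\Phi_x^{(j)}(0)}{j!} + \frac{\Phi_x^{(s)}(\theta)}{s!}.
\]
Adding and subtracting $\Phi_x^{(s)}(0)/s!$ yields $f(x)=\Phi_x(1)=T_{s,x^0}f(x)+R_{s,x^0}f(x)$ with $T_{s,x^0}f$ and $R_{s,x^0}f$ in the forms stated as the first equalities for the Taylor polynomial and as \eqref{remainder1}. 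Finally, substituting the chain-rule identity at $t=0$ converts $\sum_{j=0}^{s}\Phi_x^{(j)}(0)/j!$ into $\sum_{|\gamma|\le s} D^{\gamma}f(x^0)(x-x^0)^{\gamma}/\gamma!$, and substituting it at $t=\theta$ and $t=0$ in $\Phi_x^{(s)}(\theta)-\Phi_x^{(s)}(0)$ produces the multi-index form \eqref{remainder2}, completing the proof.
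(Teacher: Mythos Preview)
The paper states this lemma without proof, treating it as a straightforward extension of the one-variable Taylor formula; your argument is exactly the standard derivation and is correct. One small notational slip: the displayed ``bookkeeping'' identity in your inductive step has a sum over $k$ on the left but a free $k$ on the right; what you actually need (and evidently intend, given your verbal summary) is that for $|\delta|=j+1$,
\[
\sum_{k:\,\delta_k\ge 1}\frac{j!}{(\delta-e_k)!}
=\frac{j!}{\delta!}\sum_{k}\delta_k
=\frac{(j+1)!}{\delta!}\,,
\]
which then yields the claimed coefficient $(j+1)!/\delta!$ for each multi-index of order $j+1$.
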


\noindent The previous lemma has a nice consequence for the approximation of functions from $\Rs{\alpha,p}$ in case $\alpha>1$ and 
$0<p\le 2$\,. Let $p'$ denote the dual index of $p$ given by  $1/\max\{p,1\} + 1/p' = 1$.

\begin{lemma}\label{lem:taylorapprox} Let $\alpha = s+\beta > 1$ and $\Omega = \bar B^d_2$.\\
{\em (i)} For $f\in \Lip_{\alpha}(\Omega)$ and $x, x^0 \in \mathring{\Omega}$ we have 
$$
    |f(x) - T_{s,x^0}f(x)| \leq 2\|f\|_{\Lip_{\alpha}(\Omega)}\frac{\|x-x^0\|_{1}^{\alpha}}{s!}\,.
$$
{\em (ii)} Let $0<p\leq 2$. Then for $f\in \Rs{\alpha,p}$ we have the slightly better estimate
$$
    |f(x) - T_{s,x^0}f(x)| \leq \frac{2}{s!}\|x-x^0\|_{p'}^{\alpha}\,.
$$
\end{lemma}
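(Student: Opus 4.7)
My plan is to attack (i) and (ii) with different but parallel remainder estimates from the preceding lemma, exploiting the one-variable structure in (ii).

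For part (i), I would start from the explicit remainder \eqref{remainder2}. For each multiindex $\gamma$ with $|\gamma|=s$, the H\"older property built into $\|f\|_{\Lip_\alpha(\Omega)}$ gives
\[
|D^{\gamma}f(x^0+\theta(x-x^0))-D^{\gamma}f(x^0)|
\le 2\|f\|_{\Lip_\alpha(\Omega)}\min\{1,\theta\|x-x^0\|_1\}^{\beta}
\le 2\|f\|_{\Lip_\alpha(\Omega)}\|x-x^0\|_1^{\beta},
\]
using $\theta\in(0,1)$ and the elementary fact that $\min\{1,r\}^{\beta}\le r^{\beta}$ if $r\le 1$ and $\min\{1,r\}^{\beta}=1\le r^{\beta}$ if $r>1$. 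Plugging this into \eqref{remainder2} and bringing the common factor outside the sum reduces the claim to the multinomial identity $\sum_{|\gamma|=s}\frac{1}{\gamma!}|x-x^0|^{\gamma} = \frac{1}{s!}\|x-x^0\|_1^{s}$, which together with the previous bound yields the desired estimate.

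For part (ii) I would instead use the univariate form \eqref{remainder1}. Since $f(y)=g(a\cdot y)$, the chain rule yields $\Phi_x^{(s)}(t)=g^{(s)}\bigl(a\cdot x^0+t\,a\cdot(x-x^0)\bigr)\,(a\cdot(x-x^0))^{s}$, and applying the H\"older condition to $g^{(s)}$ on $[-1,1]$ gives
\[
\bigl|\Phi_x^{(s)}(\theta)-\Phi_x^{(s)}(0)\bigr|
\le |a\cdot(x-x^0)|^{s}\cdot 2\|g\|_{\Lip_\alpha[-1,1]}\,|a\cdot(x-x^0)|^{\beta}
\le 2\,|a\cdot(x-x^0)|^{\alpha}.
\]

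It then remains to show $|a\cdot(x-x^0)|\le \|x-x^0\|_{p'}$. For $1\le p\le 2$ this is just H\"older's inequality combined with $\|a\|_p\le 1$. For $0<p\le 1$, the convention $1/\max\{p,1\}+1/p'=1$ gives $p'=\infty$, and the monotonicity $\|a\|_1\le \|a\|_p\le 1$ of the $\ell_p$-quasi-norms in the regime $p\le 1$ produces $|a\cdot(x-x^0)|\le \|a\|_1\|x-x^0\|_\infty\le \|x-x^0\|_\infty = \|x-x^0\|_{p'}$. The only mildly subtle point of the whole argument is this case-split at $p=1$; everything else is a clean packaging of the Taylor remainder and the H\"older seminorm bound.
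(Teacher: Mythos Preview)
Your proof is correct and follows essentially the same approach as the paper: part (i) via the multivariate remainder \eqref{remainder2} combined with the multinomial identity, and part (ii) via the univariate remainder \eqref{remainder1} applied to $\Phi_x$ together with H\"older's inequality for $|a\cdot(x-x^0)|$. Your explicit case split at $p=1$ is even slightly more careful than the paper, which simply invokes ``H\"older's inequality'' without comment on the quasi-norm range $0<p<1$.
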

\begin{proof} To prove (i) we use \eqref{remainder2} and the definition of $\Lip_{\alpha}(\Omega)$ and estimate as follows
\begin{align}\nonumber
  |f(x) - T_{s,x^0}f(x)| &\leq \sum\limits_{|\gamma| = s} \frac{|D^{\gamma}f(x^0+\theta(x-x^0))-D^{\gamma}f(x^0)|}{\gamma!}|(x-x^0)^{\gamma}|\\
  \nonumber&\leq 2\|f\|_{\Lip_{\alpha}(\Omega)}\min\{1,\|x-x^0\|_1\}^{\beta}\cdot \sum\limits_{|\gamma|=s}
  \frac{\prod\limits_{i=1}^d |x_i-x^0_i|^{\gamma_i}}{\gamma!}\,.
\end{align}
Using mathematical induction 
it is straight-forward to verify the multinomial identity 
$$
    (a_1+\dots+a_d)^s = \sum\limits_{|\gamma| = s}\frac{s!}{\gamma!}a_1^{\gamma_1}\cdot\dots\cdot a_d^{\gamma_d}\,.
$$
Hence, choosing $a_i = |x_i-x^0_i|$ we can continue estimating 
$$
   |f(x) - T_{s,x^0}f(x)| \leq 2\|f\|_{\Lip_{\alpha}(\Omega)}\min\{1,\|x-x^0\|_1\}^{\beta} \frac{\|x-x^0\|_{1}^{s}}{s!}
$$
and obtain the assertion in (i).\\ 
For showing the improved version (ii) for functions of type $f(x) = g(a\cdot x)$ we use formula \eqref{remainder1} of the Taylor
remainder. We easily see that for $t\in (0,1)$ it holds
$$
    \Phi^{(s)}_{x}(t) = g^{(s)}\Big(a\cdot (x^0+t(x-x^0))\Big)\cdot [a\cdot (x-x^0)]^s\,.
$$
Using H\"older continuity of $g^{(s)}$ of order $\beta$ and  H\"older's inequality we see that
\begin{align}\nonumber
  |f(x) - T_{s,x^0}f(x)| &\leq \frac{1}{s!}\Big|[a\cdot (x-x^0)]^s \cdot \Bigl\{g^{(s)}\Big(a\cdot (x^0+\theta(x-x^0))\Big)-g^{(s)}(a\cdot x^0)\Bigr\}\Big|  \\
  &\nonumber\leq \frac{1}{s!}\|a\|_p^s\cdot \|x-x^0\|^s_{p'}\cdot 2\min\{1, |\theta a\cdot (x-x^0)|^{\beta}\}  \\ 
  &\nonumber\leq \frac{2}{s!}\|x-x^0\|_{p'}^{\alpha}\,.
\end{align}
The proof is complete.
\end{proof}

\subsection{Information complexity and tractability}
\label{sec:ibc} 
In this work, we want to approximate ridge functions from $\mathcal F = \Rs{\alpha,p}$ or $\mathcal F = \Rs{\alpha,p,\kappa}$ by means of deterministic sampling algorithms, using a limited amount of function values. Any allowed algorithm $S$ consists of an \emph{information map} $N_S^{\text{ada}}:{\mathcal F}\to \R^n$, and a \emph{reconstruction map} $\varphi_S:\R^n\to L_{\infty}(\Omega)$. The former provides, for $f \in \mathcal F$, function values $f(x_1),\dots,f(x_n)$ at points $x_1,\dots,x_n \in\Omega$, which are allowed to be chosen \emph{adaptively}. Adaptivity here means that $x_i$ may depend on the preceding values $f(x_1),\dots, f(x_{i-1})$. According to \cite{NW1}, we speak of \emph{standard information}. The reconstruction map then builds an approximation to $f$ based on those function values provided by the information map.

Formally, we consider the class of deterministic, adaptive sampling algorithms $\AlgAdap = \bigcup_{n \in \N} \AlgAdap_n$, where
\begin{equation*}
\begin{split}
 \AlgAdap_n = \Big\{&S: \mathcal F \rightarrow L_{\infty}(\bar B_2^d)~:~\\
 &S = \varphi_S \circ N^{\text{ada}}_S, \varphi: \R^m \rightarrow L_{\infty}, \varphi(0)=0, \; N_S^{\text{ada}}:  \mathcal F \to \R^m, \; m \leq n \Big\}\,.
\end{split}
\end{equation*}
Let us shortly comment on the restriction $\varphi(0) = 0$. Clearly, if $N_S(f) = 0$ for some $f$ then $\|f-S(f)\| = \|f\|$. Hence,  
such a function $f$ can never be well approximated by $S(f)$ since the error can not be smaller than $\|f\|$. Without the restriction 
$\varphi(0) = 0$ either the function $f$ or $-f$ is a bad function in this respect. Indeed, assume $N_S(f) = 0$ then $S(f) = S(-f) = \varphi_S(0) = b \in \R$. Then
\begin{align*}
 \| f\| &= 1/2 \| 2 f\| = 1/2 \| f - S(f) + f + S(f) \|\\
 &\leq 1/2 \big\{ \| f - S(f)\| + \| -f - S(-f) \| \big\}\\
 &\leq \max \big\{ \| f - S(f)\|, \| -f - S(-f) \| \big\}.
\end{align*}

For the given class of adaptive algorithms, the \emph{$n$-th minimal worst-case error}
\[
 g^{\text{ada}}_{n,d}(\mathcal F, L_{\infty}) := \err_{n,d}(\mathcal F, \AlgAdap,L_{\infty}) = \inf \big\{\sup_{f \in \mathcal F} \|f-S(f)\|_{\infty} : S \in \AlgAdap_n \big\},
\]
describes the approximation error which the best possible adaptive algorithm at most makes for a given budget of sampling points and any function from $\mathcal F$. Stressing that function values are the only available information, we refer to $g^{\text{ada}}_{n,d}(\mathcal F, L_{\infty})$ as the $n$-th \emph{ (adaptive) sampling number}. To reveal the effect of adaption, it is useful to compare adaptive algorithms with the subclass $\Alg \subset \AlgAdap$ of \emph{non-adaptive}, deterministic algorithms; that is, for each algorithm $S \in \Alg$ the information map is now of the form $N_S = (\delta_{x_1},\dots, \delta_{x_n})$, with $n \in \N$ and $x_1,\dots,x_n \in \bar B_2^d$. This corresponds to \emph{non-adaptive standard information} in \cite{NW1}. The associated $n$-th worst-case error
\[
 g_{n,d}(\mathcal F,L_{\infty}) := \inf_{S  \in \mathcal S_n} \sup_{f \in \mathcal F} \norm{f - S(f)}_{\infty} = \err_{n,d}(\mathcal F, \mathcal S_n, L_{\infty}). 
\]
coincides with the standard $n$-th \emph{sampling number} as known from classical approximation theory.
As a third restriction, let us introduce the $n$-th \emph{linear} sampling number $g_{n,d}^{\text{lin}}(\mathcal F, L_{\infty})$; here, only algorithms from $\Alg$ with linear reconstruction map are allowed. Clearly,
\[
 g_{n,d}^{\text{ada}}(\mathcal F, L_{\infty}) \leq g_{n,d}(\mathcal F, L_{\infty}) \leq g_{n,d}^{\text{lin}}(\mathcal F, L_{\infty}).
\]

\begin{remark}
There are results, see \cite[Section 4.2]{NW1}, which show that neither adaptivity nor non-linearity of algorithms does help under rather general conditions. These include two conditions which are certainly not met in our setting: (1) we only have function values as information, not general linear functionals; (2) the considered ridge function classes $\Rs{\alpha,p}$ and $\Rs{\alpha,p,\kappa}$ are not convex (however, they are at least symmetric). Nevertheless, the analysis in Section \ref{sec:sampling} reveals that in our setting, both adaptivity and non-linearity cannot lead to any substantial improvement in the approximation of ridge functions. 
\end{remark}

Whenever we speak of sampling of ridge functions in the following, we refer to the problem of approximating ridge functions in $\mathcal F$ by sampling algorithms from $\AlgAdap$, the $L_{\infty}$-approximation error measured in the worst-case. Its \emph{information complexity} $n(\varepsilon,d)$ is given
for $0<\varepsilon\le 1$ and $d\in\N$ by
\[
 n(\varepsilon,d) := \min \{n \in \N: g^{\text{ada}}_{n,d}(\mathcal F,L_{\infty}) \leq \varepsilon\}.
\] 

\subsection{Entropy numbers}
The concept of entropy numbers is central to this work. They can be understood as a measure to quantify the compactness of a set w.r.t.\ some reference space. For a detailed exposure and historical remarks, we refer to the monographs \cite{CaSt90,EdTr96}.
The $k$-th entropy number $e_k(K,X)$ of a subset $K$ of a (quasi-)Banach space $X$ is defined as
\be\label{defi:entropy}
    e_k(K,X)=\inf\Big\{\varepsilon>0:K \subset \bigcup\limits_{j=1}^{2^{k-1}} (x_j+\varepsilon \bar B_X)
    \text{ for some } x_1,\dots,x_{2^{k-1}}\in X\Big\}\,.
\ee
Note that $e_k(K,X) = \inf \{ \varepsilon >0: N_{\varepsilon}(K, X) \leq 2^{k-1}\}$ holds true,  where
\begin{align}\label{defi:covering_number}
 N_{\varepsilon}(K,X) := \min \Big\{n \in \N: \quad \exists x_1,\dots, x_n \in X: \; K \subset \bigcup_{j=1}^n (x_j + \varepsilon \bar B_X)\Big\}
\end{align}
denotes the \emph{covering number} of the set $K$ in the space $X$, which is the minimal natural number $n$ such that there is an $\varepsilon$-net of $K$ in $X$ of $n$ elements. We can introduce entropy numbers for operators, as well. The $k$-th entropy number $e_k(T)$ of an operator $T:X\to Y$ between two quasi-Banach spaces $X$ and $Y$ is defined by
\be\label{f1}
e_k(T)=e_k(T(\bar B_X),Y).
\ee

The results in Section \ref{sec:entropy} and \ref{sec:sampling} rely to a great degree on entropy numbers of 
the identity operator between the two finite dimensional spaces
$X = \ell^d_p(\R)$, and $Y = \ell^d_q(\R)$. Thanks to \cite{Sch84, EdTr96, Tr97, Ku01}, their behavior is completely understood. For the reader's convenience, we restate the result.
\begin{lemma}\label{lem:schuett} Let $0<p\leq q \leq \infty$ and let $k$ and $d$ be natural numbers. Then,
$$
e_k(\bar B_p^d,\ell_q^d) \asymp \left\{
\begin{array}{rcl}1&:&1\leq k \leq \log(d),\\
\Big(\frac{\log(1+d/k)}{k}\Big)^{1/p-1/q}&:&\log(d)\leq k\leq d,\\
2^{-\frac{k}{d}}d^{1/q-1/p}&:&k\geq d\,.
\end{array}
\right.
$$
The constants behind ``$\asymp$'' do neither depend on $k$ nor on $d$. They only depend on the parameters $p$ and $q$.
\end{lemma}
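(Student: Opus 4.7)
The plan is to split according to how $k$ compares to $\log d$ and $d$, handling the three regimes of the claim separately. For the small-$k$ regime $1 \le k \le \log d$, the upper bound $e_k \le 1$ is immediate from the fact that the identity $\mbox{id}:\ell_p^d \to \ell_q^d$ has norm one (since $p \le q$). A matching lower bound comes from the observation that the $2d$ extreme points $\pm e_i$ lie in $\bar B_p^d$ and are pairwise at $\ell_q^d$-distance $2^{1/q}$, so a covering of $\bar B_p^d$ by fewer than $d$ balls of radius smaller than $2^{1/q-1}$ is impossible. This pins down $e_k \asymp 1$ on $k \le \log d$.

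For the large-$k$ regime $k \ge d$, the natural tool is a volume argument. If $\bar B_p^d$ is covered by $2^{k-1}$ translates of $\varepsilon \bar B_q^d$, then $\mbox{vol}(\bar B_p^d) \le 2^{k-1}\varepsilon^d\, \mbox{vol}(\bar B_q^d)$, and the classical asymptotics $\mbox{vol}(\bar B_r^d)^{1/d} \asymp d^{-1/r}$ give the lower bound $\varepsilon \gtrsim 2^{-k/d} d^{1/q - 1/p}$. The matching upper bound follows from the standard packing/covering equivalence: a maximal $\varepsilon$-separated subset of $\bar B_p^d$ in $\ell_q^d$ is automatically an $\varepsilon$-net, and the same volume comparison bounds its cardinality by $2^{k-1}$ for $\varepsilon$ of the claimed order.

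The intermediate regime $\log d \le k \le d$ is the main obstacle and requires genuinely combinatorial arguments. For the upper bound I would use approximation by sparse vectors: for $m \asymp k/\log(1+d/k)$, every $x \in \bar B_p^d$ is close in $\ell_q^d$ to its best $m$-sparse approximation at rate $\asymp m^{1/q-1/p}$, and the set of $m$-sparse candidates can be discretised using of order $\binom{d}{m} \cdot 2^{cm}$-many elements, which matches $2^{k-1}$ precisely when $m \log(ed/m) \asymp k$. The matching lower bound is obtained by a Gluskin-type packing construction: one exhibits an appropriately rescaled family of $\{-1,0,+1\}$-vectors in $\bar B_p^d$ that are pairwise $\varepsilon$-separated in $\ell_q^d$ and of the required cardinality. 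The hard part is that in the quasi-Banach range $0 < p < 1$ neither the sparse approximation lemma nor the packing construction is routine, and obtaining the sharp logarithmic factor $\log(1+d/k)$ uniformly across all parameter ranges requires the full machinery developed by Schütt, Edmunds--Triebel, Triebel, and Kühn. At this point I would simply invoke \cite{Sch84, EdTr96, Tr97, Ku01}, where the bounds are assembled in full generality, rather than reproduce the combinatorial estimates.
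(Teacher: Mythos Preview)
The paper does not actually prove this lemma: it is stated ``for the reader's convenience'' with the remark that ``thanks to \cite{Sch84, EdTr96, Tr97, Ku01}, their behavior is completely understood,'' and no argument is given. Your proposal therefore goes well beyond what the paper does; you sketch the standard arguments in each of the three regimes and only defer to the literature for the intermediate range, whereas the paper defers entirely.

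Your outline is essentially the correct one. The small-$k$ and large-$k$ regimes are handled exactly as you describe (norm-one identity plus separated unit vectors for the first, volume comparison using $\mbox{vol}(\bar B_r^d)^{1/d} \asymp d^{-1/r}$ for the third), and for $\log d \le k \le d$ your two ingredients---best $m$-term approximation in $\ell_q^d$ for the upper bound and a combinatorial packing of sparse sign vectors for the lower bound---are precisely the mechanisms in Sch\"utt's and K\"uhn's proofs. One small inaccuracy: the $2d$ vectors $\pm e_i$ are not all at mutual $\ell_q^d$-distance $2^{1/q}$ (the pair $e_i, -e_i$ is at distance $2$), but the \emph{minimum} pairwise distance is indeed $2^{1/q}$, which is what the packing argument needs. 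Since both you and the paper ultimately invoke \cite{Sch84, EdTr96, Tr97, Ku01} for the sharp middle regime, the approaches coincide in substance; yours simply supplies more of the surrounding context.
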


If we consider entropy numbers of $\ell_p^d$-spheres instead of $\ell_p^d$-balls in $\ell_q^d$, the situation is quite similar. We are not aware of a reference where this has already been formulated thoroughly. 
\begin{lemma}\label{res:entropy_sphere}
Let $d\in \N$, $d\geq 2$, $0<p\leq q \leq \infty$, and $\bar p = \min\{1,p\}$. Then,
\begin{enumerate}[label=(\roman*)]
\item \[ 2^{-k/(d-1)} d^{1/q-1/p} \; \lesssim \; e_k(\Sphere_p, \ell_q^d) \; \lesssim \; 2^{-k/(d-\bar p)} d^{1/q-1/p}, \quad k \geq d. \]
\item
$$
e_k(\Sphere_p,\ell_q^d) \asymp \left\{
\begin{array}{rcl}1&:&1\leq k \leq \log(d),\\
\Big(\frac{\log(1+d/k)}{k}\Big)^{1/p-1/q}&:&\log(d)\leq k\leq d.
\end{array}
\right.
$$
\end{enumerate}
The constants behind ``$\asymp$'' only depend on $p$ and $q$.

\end{lemma}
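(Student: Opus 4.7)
The plan is to establish matching upper and lower bounds in each of the three parameter regimes, mostly by reduction to the ball case (Lemma \ref{lem:schuett}) and with one residual regime handled by a volume argument. The inclusion $\Sphere_p \subset \bar B_p^d$ immediately gives $e_k(\Sphere_p, \ell_q^d) \leq e_k(\bar B_p^d, \ell_q^d)$, and combined with Lemma \ref{lem:schuett} this already settles the upper bound in part (ii).

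For the lower bounds in both (i) and (ii), I use a single argument based on parameterizing a polar cap of $\Sphere_p$ by a ball in one lower dimension. Consider $C := \{x \in \Sphere_p : x_d \geq 1/2\}$. The coordinate projection $\pi(x) = (x_1,\dots,x_{d-1})$ is $1$-Lipschitz as a map $\ell_q^d \to \ell_q^{d-1}$, and its image on $C$ equals $c_p\,\bar B_p^{d-1}$ with $c_p = (1-2^{-p})^{1/p}$, since $x \in C$ is equivalent to $\|\pi(x)\|_p^p \leq 1-2^{-p}$. Projecting the centers of any $\varepsilon$-covering of $\Sphere_p$ in $\ell_q^d$ therefore yields an $\varepsilon$-covering of $c_p\,\bar B_p^{d-1}$ in $\ell_q^{d-1}$, whence
\[
 e_k(\Sphere_p, \ell_q^d) \;\geq\; c_p\, e_k(\bar B_p^{d-1}, \ell_q^{d-1}).
\]
Lemma \ref{lem:schuett} applied in dimension $d-1$ (and noting $d-1 \asymp d$) then produces the stated lower bounds in all three regimes simultaneously.

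For the upper bound in part (i), where $k \geq d$, I use a packing/volume argument. Let $\{x_1,\dots,x_N\} \subset \Sphere_p$ be maximal $\varepsilon$-separated in $\ell_q^d$; then the balls $x_i + (\varepsilon/2)\bar B_q^d$ are pairwise disjoint and contained in the Minkowski sum $\Sphere_p + (\varepsilon/2)\bar B_q^d$. For $p \geq 1$ the triangle inequality together with $\|y\|_p \leq d^{1/p-1/q}\|y\|_q$ confines this sum to the annulus $\{z : |\,1-\|z\|_p| \leq c\varepsilon d^{1/p-1/q}\}$ of Lebesgue volume of order $\varepsilon\, d^{1+1/p-1/q}\mathrm{vol}(\bar B_p^d)$; for $0<p<1$ the $p$-triangle inequality instead yields an annulus of $\ell_p$-thickness of order $(d^{1/p-1/q}\varepsilon)^p$ and volume of order $d^{2-p/q}\varepsilon^p\,\mathrm{vol}(\bar B_p^d)$. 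Dividing by $\mathrm{vol}((\varepsilon/2)\bar B_q^d)$ and using the standard estimate $(\mathrm{vol}(\bar B_p^d)/\mathrm{vol}(\bar B_q^d))^{1/d} \asymp d^{1/q-1/p}$ (Stirling's formula), both cases consolidate into
\[
 N \;\lesssim\; \bigl(C\, d^{1/q-1/p}\bigr)^{d-\bar p}\,\varepsilon^{-(d-\bar p)},
\]
which upon choosing $N = 2^{k-1}$ produces the claimed upper bound on $e_k$.

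The main technical point is the volume computation when $0 < p < 1$: the quasi-convexity of $\bar B_p^d$ forces the Minkowski sum to spread out farther in the radial direction than in the convex case, and this is precisely the mechanism that degrades the exponent from $1/(d-1)$ to $1/(d-p)$, opening the gap with the lower bound in (i). Everything else reduces cleanly to Lemma \ref{lem:schuett}.
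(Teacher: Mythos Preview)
Your proof is correct, and for the upper bounds it proceeds essentially as the paper does: the trivial inclusion $\Sphere_p \subset \bar B_p^d$ for part~(ii), and a packing/volume argument placing disjoint $\ell_q$-balls into an $\ell_p$-annulus for part~(i), with the case split $p\geq 1$ versus $p<1$ arising for exactly the reason you identify.

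The genuine difference is in the lower bounds. The paper does not use your polar-cap reduction. Instead, for part~(i) it runs a separate volume argument (an $\varepsilon$-covering of $\Sphere_p$ in $\ell_p^d$ must cover the annulus $(1+\varepsilon)\bar B_p^d\setminus(1-\varepsilon)\bar B_p^d$, whose volume is at least $d\varepsilon\,\mathrm{vol}(\bar B_p^d)$), first for $p=q$ and then passes to $p\neq q$ via the trivial inequality $e_k(\Sphere_p,\ell_q^d)\geq d^{1/q-1/p}e_k(\Sphere_p,\ell_p^d)$. For part~(ii) it simply observes that K\"uhn's original lower-bound proof for $\bar B_p^d$ already applies to $\Sphere_p$. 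Your argument is more unified: the single observation that the projection of the cap $\{x\in\Sphere_p: x_d\geq 1/2\}$ onto the first $d-1$ coordinates is $c_p\bar B_p^{d-1}$ gives $e_k(\Sphere_p,\ell_q^d)\geq c_p\,e_k(\bar B_p^{d-1},\ell_q^{d-1})$ and then Lemma~\ref{lem:schuett} in dimension $d-1$ handles all three regimes at once. This is cleaner and avoids both the separate annulus-volume computation and the appeal to K\"uhn's internal construction; the paper's route, on the other hand, keeps everything in dimension $d$ and makes the annulus structure (which it also needs for the upper bound) do double duty.
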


\begin{proof}
For given $\varepsilon > 0$, an $\varepsilon$-covering $\{y_1,\dots,y_N\}$ of $\Sphere_p$ in $\ell_p^d$ fulfils
\begin{align}\label{eq:entropy_sphere_1}
 (1+\varepsilon) \bar B_p^d \setminus (1-\varepsilon) \bar B_p^d \subseteq &
 \bigcup_{i=1}^N (y_i + 2^{1/\bar p} \varepsilon \bar B_p^d).
\end{align}
Let $\bar q = \min\{1,q\}$. For given $\varepsilon > 0$, a maximal set $\{x_1,\dots,x_M\} \subset \Sphere_p$ of vectors with mutual distance greater $\varepsilon$ obeys
\begin{align}\label{eq:entropy_sphere_2}
 & \bigcup_{i=1}^M (x_i + 2^{-1/\bar q} \, \varepsilon \bar B_q^d) \subseteq
 (1 + \varepsilon_d^{\bar p})^{1/\bar p} \bar B_p^d \setminus (1 - \varepsilon_d^{\bar p})^{1/\bar p} \bar B_p^d,
\end{align}
where $\varepsilon_d = 2^{-1/\bar q} \, \varepsilon \, d^{1/p-1/q}$.

\noindent\emph{(i).} A standard volume argument applied to \eqref{eq:entropy_sphere_1} yields
$h(\varepsilon) \leq N \varepsilon^d 2^{d/\bar p}$, where $h(\varepsilon) = (1+\varepsilon)^d - (1-\varepsilon)^d$. First-order Taylor expansion in $\varepsilon$ allows to estimate $h(\varepsilon) \geq d\varepsilon$. Solving for $N$ yields a lower bound for covering numbers in case $p=q$. The lower bound in case $p\neq q$ follows from the trivial estimate $e_k(\Sphere_p, \ell_q^d)  \geq d^{1/q-1/p} \; e_k(\Sphere_p,\ell_p^d)$.

For the upper bound in case $p=q$ a standard volume argument applied to \eqref{eq:entropy_sphere_2} yields $M \varepsilon^d 2^{-d/\bar p} \leq h_{p}(\varepsilon^{\bar p}/2)$ with $h_{p}(x) = (1+x)^{d/\bar p} - (1-x)^{d/\bar p}$. The mean value theorem gives $h(x) \leq d/\bar p\, 2^{d/\bar p}x$ if $0<x\leq 1$. Hence, we get $h_{p}(\varepsilon^{\bar p}/2) \leq d/\bar p\, 2^{d/\bar p} \varepsilon^{\bar p}/2$. Solving for $M$ gives an upper bound for packing numbers and hence also for covering numbers. In case $p\neq q$ we again use
\eqref{eq:entropy_sphere_2} and pass to volumes. This time the quotient $\mbox{vol}(B_p^d)/\mbox{vol}(B_q^d)$ remains in the upper
bound for $M$. The given bounds now easily translate to the stated bounds on entropy numbers. In case $p\neq q$
one has to take
$$
  \Big[\frac{\mbox{vol}(B_p^d)}{\mbox{vol}(B_q^d)}\Big]^{1/(d-\bar p)} \asymp d^{1/q-1/p}
$$
into account to get the additional factor in $d$.

\noindent\emph{(ii).} The proof by K\"uhn \cite{Ku01} immediately gives the lower bound. The upper bound follows trivially from $\Sphere_p \subset \bar B_p^d$.

\end{proof}

\begin{remark} Note, that in case $p \geq 1$ we have the sharp bounds
$$
e_k(\Sphere_p,\ell_q^d) \asymp \left\{
\begin{array}{rcl}1&:&1\leq k \leq \log(d),\\
\Big(\frac{\log(1+d/k)}{k}\Big)^{1/p-1/q}&:&\log(d)\leq k\leq d,\\
2^{-\frac{k}{d-1}}d^{1/q-1/p}&:& k \geq d\,.
\end{array}
\right.
$$
In case $p<1$ there remains a gap between the upper and lower estimate for $e_k(\Sphere_p,\ell_q^d)$ if $k\geq d$.
However, this gap can be closed by using a different proof technique, see \cite{HiMa13}.
\end{remark}

\section{Entropy numbers  of ridge functions}
\label{sec:entropy}

This section is devoted to the study of entropy numbers of the classes $\Rs{\alpha,p}$ and $\Rs{\alpha,p,\kappa}$. 
Especially, we want to relate their behavior to that of entropy numbers of uni- and multivariate Lipschitz functions. This will give us an understanding how ``large'' the ridge function classes are. Let us stress that we are interested in the dependence of the entropy numbers on the underlying dimension $d$, as it is usually done in the area of information-based complexity.

To begin with, let us examine uni- and multivariate Lipschitz functions from $\Lip_{\alpha}[-1,1]$ and $\Lip_{\alpha}(\Omega)$.
Recall the notation $B_{\alpha}:=B_{\Lip_{\alpha}[-1,1]}$ and $B_{\Lip_{\alpha}(\Omega)}$ for the respective unit balls. 
The behavior of entropy numbers of univariate Lipschitz functions is well-known, see for instance \cite[Chap. 15, \S 2, Thm. 2.6]{lomavo96}.

\begin{lemma}\label{lem:en_univariate_Lip} For $\alpha>0$ there exist two constants $0<c_{\alpha}<C_{\alpha}$ such 
that 
\[
c_{\alpha} k^{-\alpha} \leq e_k(\bar B_{\alpha},L_{\infty}([-1,1])) \leq C_{\alpha}k^{-\alpha}\,,\quad k\in \N\,.
\]
\end{lemma}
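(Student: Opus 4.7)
This is the classical Kolmogorov--Tikhomirov estimate for H\"older balls (see the referenced Lorentz--Golitschek--Makovoz monograph). I will sketch both directions, leaving routine constants aside.

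For the upper bound I would build an $\varepsilon$-net via local polynomial approximation. Put $s=\llfloor\alpha\rrfloor$ and partition $[-1,1]$ into $n$ equal subintervals of length $h=2/n$ with knots $x_0<\dots<x_n$. On each subinterval the degree-$s$ Taylor polynomial of $f\in\bar B_\alpha$ at the left endpoint approximates $f$ uniformly to within $2h^\alpha/s!$, by the univariate instance of Lemma \ref{lem:taylorapprox}(i); choosing $n\asymp\varepsilon^{-1/\alpha}$ thus reduces the problem to covering the finite-dimensional family of piecewise Taylor polynomials by an $\varepsilon$-net in $L_\infty$. The technical core is to avoid a spurious $\log n$ factor that a naive coefficient-wise quantization would produce: one fixes the $s+1$ derivatives of $f$ at $x_0$ on an $O(\varepsilon^{-1})$-grid, and then quantizes not the derivatives at subsequent knots but their increments between consecutive knots. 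The Lipschitz seminorm forces these increments to live in intervals that shrink with $h$, so only $O(1)$ admissible quantization levels per knot per derivative are needed to preserve accuracy $\varepsilon\asymp h^\alpha$. This yields an $\varepsilon$-net of log-cardinality $\lesssim n\asymp\varepsilon^{-1/\alpha}$, and inverting gives $e_k(\bar B_\alpha,L_\infty)\leq C_\alpha k^{-\alpha}$.

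For the lower bound I would use a bump-function packing. Fix a non-trivial $\psi\in C_c^\infty((1/4,3/4))$ with $\|\psi\|_{\Lip_\alpha[-1,1]}\leq 1$ and $\|\psi\|_\infty\geq c_\psi>0$, set $h=2/n$ and $x_j=-1+jh$, and define disjointly supported atoms $\psi_j(x):=c_\alpha h^\alpha\psi((x-x_j)/h)$. Here $c_\alpha$ is chosen small enough so that, thanks to the gap $\geq h/2$ between consecutive supports, every signed superposition $f_\sigma=\sum_{j=0}^{n-1}\sigma_j\psi_j$ with $\sigma\in\{-1,+1\}^n$ still belongs to $\bar B_\alpha$ (this needs a short case distinction in the H\"older seminorm depending on whether the two evaluation points lie in the same atom's support or not). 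Any two distinct sign patterns satisfy $\|f_\sigma-f_{\sigma'}\|_\infty\geq 2c_\alpha c_\psi h^\alpha$, so $\bar B_\alpha$ contains $2^n$ functions that are mutually $\asymp h^\alpha$-separated in $L_\infty$. Taking $n=k$ forces any $2^{k-1}$-element cover to have mesh at least $c_\alpha k^{-\alpha}$, matching the upper bound.

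The main obstacle is clearly the upper bound: making rigorous the increment-quantization claim (and thereby avoiding the logarithmic loss) requires a careful count exploiting the compatibility of Taylor coefficients across adjacent knots, i.e.\ a discrete expression of the fact that $\Lip_\alpha$-smoothness propagates regularity between neighbouring pieces. The lower bound, by contrast, is a standard packing argument once the right normalisation $c_\alpha h^\alpha$ for the bumps is identified.
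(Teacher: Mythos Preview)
The paper does not give its own proof of this lemma; it simply cites it as a well-known fact from \cite[Chap.~15, \S 2, Thm.~2.6]{lomavo96}. Your sketch is a correct outline of the classical Kolmogorov--Tikhomirov argument one finds in that reference: piecewise Taylor approximation plus increment-quantization for the upper bound, and a disjoint bump-function packing for the lower bound. In particular, your identification of the increment-quantization step as the place where one must be careful to avoid a spurious $\log n$ is exactly right, and your lower-bound construction is essentially the same as the one the paper uses (with the bump \eqref{eq:bump'}) in the proof of Proposition~\ref{res:en_univariate_Lip_derivative}.
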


\noindent This behavior does not change if we consider only functions with first derivative in the origin bounded away from zero, as we do with the profiles in the class $\Rs{\alpha,p,\kappa}$.

\begin{proposition}\label{res:en_univariate_Lip_derivative} Let $\alpha > 1$ and $0 < \kappa \le 1$. Consider the class
\[
 \Lip_{\alpha}^{\kappa}(\intcc{-1,1}) = \{ f \in \Lip_{\alpha}(\intcc{-1,1}): \|f\|_{\Lip_{\alpha}[-1,1]}\leq 1~,~ \abs{f'(0)} \geq \kappa\}.
\]
For the entropy numbers of this class we have two constants $0<c_{\alpha}<C_{\alpha}$, such that 
\[
 c_{\alpha} k^{-\alpha} \leq e_k(\Lip_{\alpha}^{\kappa}([-1,1]), L_{\infty}([-1,1])) \leq C_{\alpha} k^{-\alpha}\,,\quad k\in \N\,.
\]
\end{proposition}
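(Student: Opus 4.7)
The plan is to prove the upper bound by a direct inclusion and the lower bound by adapting the classical bump-packing construction so that the extra constraint $|f'(0)| \geq \kappa$ is automatically satisfied. For the upper bound, observe that $\Lip_\alpha^\kappa([-1,1]) \subseteq \bar B_\alpha$, so Lemma \ref{lem:en_univariate_Lip} immediately yields
\[
 e_k(\Lip_\alpha^\kappa([-1,1]), L_\infty([-1,1])) \leq e_k(\bar B_\alpha, L_\infty([-1,1])) \leq C_\alpha k^{-\alpha}.
\]

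For the lower bound, fix a nonzero bump $\phi \in C_c^\infty(\R)$ with $\supp\phi = [0,1]$ and all derivatives vanishing at $0$ and $1$, normalised so that $\|\phi\|_{\Lip_\alpha(\R)} \leq 1$ and $c_0 := \|\phi\|_\infty > 0$. For each $N \in \N$ and $i = \lceil N/2 \rceil, \dots, N-1$, set $\phi_{N,i}(t) := N^{-\alpha}\phi(Nt - i)$; these have pairwise disjoint supports $[i/N,(i+1)/N] \subseteq [1/2, 1]$. Using the scaling $\phi_{N,i}^{(j)}(t) = N^{j-\alpha}\phi^{(j)}(Nt-i)$ together with $j \leq s = \llfloor\alpha\rrfloor < \alpha$, one checks by a routine computation of the H\"older seminorm of order $\beta = \alpha - s$ the uniform bound
\[
 \Bigl\|\sum_{i=\lceil N/2\rceil}^{N-1} \epsilon_i\,\phi_{N,i}\Bigr\|_{\Lip_\alpha[-1,1]} \leq C_1
\]
for every sign pattern $\epsilon \in \{-1,1\}^{\lfloor N/2\rfloor}$, with $C_1$ depending only on $\alpha$ and $\phi$.

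Define the test functions $h_\epsilon(t) := \kappa t + \tfrac{1-\kappa}{C_1}\sum_{i=\lceil N/2\rceil}^{N-1} \epsilon_i\,\phi_{N,i}(t)$. Since $\|\kappa t\|_{\Lip_\alpha[-1,1]} = \kappa$, we obtain $\|h_\epsilon\|_{\Lip_\alpha[-1,1]} \leq \kappa + (1-\kappa) = 1$, and because each bump vanishes identically on a neighbourhood of the origin, we have $h_\epsilon'(0) = \kappa$; hence $h_\epsilon \in \Lip_\alpha^\kappa([-1,1])$. For $\epsilon \neq \epsilon'$, picking any index $i$ where they differ and evaluating at the maximum point of $\phi_{N,i}$ (where all other bumps vanish) yields the separation
\[
 \|h_\epsilon - h_{\epsilon'}\|_{L_\infty[-1,1]} \;\geq\; \frac{2(1-\kappa)c_0}{C_1}\,N^{-\alpha}.
\]
Applying the standard separation-implies-entropy inequality to these $2^{\lfloor N/2\rfloor}$ test functions and choosing $N \asymp k$ produces the desired lower bound $e_k(\Lip_\alpha^\kappa([-1,1]), L_\infty([-1,1])) \geq c_\alpha k^{-\alpha}$.

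The main obstacle is the boundary case $\kappa = 1$, where the above construction produces a lower-bound constant proportional to $(1-\kappa)$ that degenerates to zero. To cover $\kappa = 1$ as well, one can replace the linear base profile $\kappa t$ by a strictly concave reference, e.g.\ a suitably renormalised $\arctan$, whose derivative equals $1$ only at the origin and stays strictly below $1$ on $[1/2,1]$; this leaves a definite amount of $\Lip_\alpha$-slack on the support of the bumps, and the same packing argument then yields a $\kappa$-independent lower bound constant.
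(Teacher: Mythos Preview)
Your argument follows the same strategy as the paper's: the upper bound by inclusion into $\bar B_\alpha$, and the lower bound by a bump-packing construction on top of a base profile whose derivative at the origin meets the constraint. The paper, however, takes $h(x)=\sin x$ as the base profile from the start. Since $\sin'(0)=1\ge\kappa$ for every $\kappa\in(0,1]$, and since on a fixed subinterval $I\subset(0,1)$ one has $\gamma:=\sup_{j}\max_{x\in I}|h^{(j)}(x)|<1$, the bumps can be added with the fixed factor $1-\gamma$ and the resulting lower-bound constant is automatically independent of $\kappa$; no case distinction at $\kappa=1$ is needed.

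Your first construction with base $\kappa t$ is cleaner in one respect (the triangle inequality for the $\Lip_\alpha$ norm applies directly), but it produces a constant proportional to $1-\kappa$, which is not only degenerate at $\kappa=1$ but is $\kappa$-dependent for all $\kappa$, contrary to what the proposition asserts. Your $\arctan$ fix is correct in spirit, but note that the ``renormalisation'' cannot be a scalar multiple (that would destroy $h'(0)=1$); it must be a dilation of the form $h(t)=\lambda^{-1}\arctan(\lambda t)$ with $\lambda$ small enough (depending on $\alpha$) to force $\|h\|_{\Lip_\alpha}\le 1$, and then the slack on $[1/2,1]$, while positive, shrinks like $\lambda^2$. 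The paper's choice of $\sin$ sidesteps this balancing act.
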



\begin{proof}
The upper bound is immediate by Lemma \ref{lem:en_univariate_Lip}. The lower bound is proven in the same way as for general univariate Lipschitz functions of order $\alpha$ 
except that we have to adapt the ``bad'' functions such that they meet the constraint on the first derivative in the origin.
Put again $s=\llfloor\alpha\rrfloor$ and $\beta = \alpha -s>0 $. Consider the standard smooth bump function
\be\label{eq:bump'}
 \varphi(x) = \begin{cases} e^{-\frac{1}{1-x^2}}\quad &: \ |x|< 1,\\
 0\quad &:\ |x|\ge 1.
 \end{cases}
\ee
Let
\[
 \psi_{k,b}(x) = \frac{c_{\alpha} \cdot \varphi(5k(x-b))}{k^{\alpha}},\quad k\in \N,\ b\in \R,
\]
where $c_{\alpha} = 1/(5^\alpha\norm{\varphi}_{\Lip_\alpha})$.
The scaling factor $c_{\alpha} k^{-\alpha}$ assures $\psi_{k,b} \in \Lip_{\alpha}(\intcc{-1,1})$. Let $a=\pi/4-1/5$ and $I=[a, a+2/5]\subset(0,1)$. We put $h(x)=\sin(x)$ and 
\begin{align}\label{eq:maxcosin}
\gamma=\sup_{j\in{\N}_0}\max_{x\in I}|h^{(j)}(x)|=\max_{x\in I}\max \{\cos(x),\sin(x) \}<1.
\end{align}
For any multi-index $\theta = (\theta_1, \dots, \theta_k)\in \{0,1\}^k$ let
\[
 g_{\theta} = (1-\gamma)\sum_{j=1} ^k \theta_j \psi_{k,b_j}, \qquad b_j = a + \frac{2j-1}{5k}.
\]
Observe, that $\supp g_{\theta}\subset I.$

There are $2^k$ such multi-indices and for two different multi-indices $\hat \theta$ and $\tilde \theta$ we have 
\[
 \norm{g_{\hat \theta} - g_{\tilde \theta}}_{\infty} = (1-\gamma) \norm{\psi_{k,0}}_{\infty} = c_{\alpha} (1-\gamma)e^{-1}k^{-\alpha}.
\]
Put $f_{\theta} = h + g_{\theta}$.
Because of the scaling factors, it is assured that $f_{\theta} \in \Lip_{\alpha}^{\kappa}(\intcc{-1,1})$.
On the other hand, $f'_\theta(0)=\cos(0)=1.$
Obviously, $\norm{f_{\tilde \theta} - f_{\hat \theta}}_{\infty} = \norm{g_{\tilde \theta} - g_{\hat \theta}}_{\infty}$.
We conclude
\[
 e_k(\Lip_{\alpha}^{\kappa}(\intcc{-1,1}), L_{\infty}) \geq c'_{\alpha} k^{-\alpha}
\]
for $c'_\alpha=(1-\gamma)e^{-1}c_\alpha.$

\end{proof}

Considering multivariate Lipschitz functions, decay rates of entropy numbers change dramatically compared to those of univariate Lipschitz functions; they depend exponentially on $1/d$.
This is known if the domain is a cube $\Omega=I^d$, see \cite[Chap. 15, \S 2]{lomavo96}. We provide an extension to our situation where 
the domain is $\Omega = \bar B_2^d$.

\begin{proposition}
Let $\alpha > 0$. For natural numbers $n$ and $k$ such that $2^{k-1} < n \leq 2^k$ we have
\[
 e_n(id: \Lip_{\alpha}(\bar B_2^d) \to L_{\infty}(\bar B_2^d)) \geq c_{\alpha} e_{k+1}(id: \ell_2^d \to \ell_2^d)^{\alpha}.
\]
In particular, \old{for $n \geq 2^d$,} we have $e_n(id: \Lip_{\alpha}(\bar B_2^d) \to L_{\infty}(\bar B_2^d)) \gtrsim n^{-\alpha/d}$.
\end{proposition}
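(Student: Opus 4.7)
The strategy is the classical bump-function construction: transport an $\ell_2^d$-separated packing in $\bar B_2^d$ to an $L_\infty$-separated family in $\bar B_{\Lip_\alpha(\bar B_2^d)}$, then extract the entropy lower bound by a pigeonhole argument. First, set $\rho := e_{k+1}(\mathrm{id}:\ell_2^d\to\ell_2^d)$. Since $\bar B_2^d$ cannot be covered by $2^{k}$ closed $\ell_2^d$-balls of radius strictly less than $\rho$, the standard covering-packing inequality (applied at radius $\rho/2$) produces points $y_1,\dots,y_M\in\bar B_2^d$ with $M\geq 2^k+1$ and pairwise Euclidean distance at least $\rho/2$. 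Setting $\lambda := \rho/8$, the closed balls $\bar B_2^d(y_i,\lambda)$ are pairwise disjoint and separated by a gap of order $\lambda$.

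Next, fix a smooth bump $\varphi\in C^\infty(\R^d)$ with $\supp\varphi\subset\bar B_2^d(0,1)$ and $\varphi(0)=1$, and define
$$ \varphi_i(x) := c_\alpha\,\lambda^\alpha\,\varphi\bigl((x-y_i)/\lambda\bigr), \qquad i=1,\dots,M, $$
for a sufficiently small constant $c_\alpha>0$ depending only on $\alpha$ and $\varphi$. A chain-rule computation gives $\|\varphi_i\|_{\Lip_\alpha(\bar B_2^d)}\leq 1$ and $\|\varphi_i\|_\infty = c_\alpha\lambda^\alpha$. For each sign vector $\theta\in\{-1,+1\}^M$, put $f_\theta := \sum_i \theta_i\varphi_i$. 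Write $\alpha=s+\beta$ with $s\in\N_0$ and $\beta\in(0,1]$ (as in the paper's convention). Since the supports are pairwise disjoint, $\|D^\gamma f_\theta\|_\infty = \max_i\|D^\gamma\varphi_i\|_\infty$ for $|\gamma|\leq s$. The only delicate point is the $\beta$-Hölder seminorm of $D^\gamma f_\theta$ for $|\gamma|=s$ when $x,y$ lie in distinct supports: there $\|x-y\|_1\geq\|x-y\|_2\gtrsim\lambda$, while each $\|D^\gamma\varphi_i\|_\infty\lesssim\lambda^\beta$; hence the quotient $|D^\gamma f_\theta(x)-D^\gamma f_\theta(y)|/(2\min\{1,\|x-y\|_1\}^\beta)$ is bounded by an absolute constant, which can be absorbed by shrinking $c_\alpha$. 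This gives $\|f_\theta\|_{\Lip_\alpha(\bar B_2^d)}\leq 1$ for every $\theta$.

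For distinct $\theta,\theta'$, evaluating at any $y_i$ with $\theta_i\neq\theta_i'$ yields $|f_\theta(y_i)-f_{\theta'}(y_i)|=2c_\alpha\lambda^\alpha$, so the $2^M\geq 2^{2^k+1}$ functions $\{f_\theta\}$ form a $2c_\alpha\lambda^\alpha$-separated subset of $\bar B_{\Lip_\alpha(\bar B_2^d)}$ in $L_\infty$. If $e_n(\mathrm{id}:\Lip_\alpha\to L_\infty) < c_\alpha\lambda^\alpha$, then $\bar B_{\Lip_\alpha(\bar B_2^d)}$ would be coverable by $2^{n-1}$ $L_\infty$-balls of strictly smaller radius, and pigeonhole would force $2^{n-1}\geq 2^{2^k+1}$, i.e.\ $n\geq 2^k+2$, contradicting $n\leq 2^k$. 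Hence $e_n\geq c_\alpha\lambda^\alpha\gtrsim e_{k+1}(\mathrm{id}:\ell_2^d\to\ell_2^d)^\alpha$, which is the first claim (up to absorbing $8^{-\alpha}$ into $c_\alpha$). For the ``in particular'' assertion, apply Lemma~\ref{lem:schuett} with $p=q=2$: for $k+1\geq d$ one has $e_{k+1}(\bar B_2^d,\ell_2^d)\asymp 2^{-(k+1)/d}$, and since $n>2^{k-1}$ gives $2^{-(k+1)}\gtrsim 1/n$, we obtain $e_n\gtrsim n^{-\alpha/d}$; for $k+1<d$, $e_{k+1}$ is bounded below by a universal constant, and $n^{-\alpha/d}\leq 1$ yields the bound as well.

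The main obstacle is the verification of $\|f_\theta\|_{\Lip_\alpha(\bar B_2^d)}\leq 1$ in the second step: a priori, the Hölder seminorm of top-order derivatives could misbehave across two different bump supports. The resolution is that the scale $\lambda\sim\rho$ is exactly matched to both the minimal separation between supports ($\sim\lambda$) and the magnitude of the top-order derivatives on each bump ($\sim\lambda^\beta$), so that the constant $c_\alpha$ can be chosen uniformly in $k$ and $d$.
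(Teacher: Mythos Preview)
Your proof is correct and follows essentially the same route as the paper: build disjoint rescaled bumps on an $\ell_2$-packing of $\bar B_2^d$, assemble $2^M$ functions in the $\Lip_\alpha$-unit ball that are well separated in $L_\infty$, and pigeonhole against the assumed $L_\infty$-cover. The only noteworthy difference is that you make the cross-support H\"older estimate for $D^\gamma f_\theta$ explicit (matching the bump amplitude $\lambda^\beta$ against the inter-support distance $\gtrsim\lambda$), whereas the paper simply asserts $\|f_\theta\|_{\Lip_\alpha}\le 1$ ``by construction''; your use of $\{\pm1\}$ coefficients instead of $\{0,1\}$ is cosmetically different but immaterial to the cardinality count.
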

\begin{proof}
Consider the radial bump function $\varphi(x)$ given by
\be\label{eq:bump}
 \varphi(x) = \begin{cases} e^{-\frac{1}{1-\|x\|_2^2}}\quad &: \ \|x\|_2< 1,\\
 0\quad &:\ \|x\|_2\ge 1.
 \end{cases}
\ee
Let $s=\llfloor\alpha\rrfloor$. With $c_{\alpha} := (\norm{\varphi}_{\Lip_\alpha})^{-1}$ the rescaling
\[
 \varphi_{\varepsilon}^{\alpha}(x) := c_{\alpha} \varepsilon^{\alpha} \varphi(x/\varepsilon)
\]
is contained in the closed unit ball of $\Lip_{\alpha}(\Omega)$.

For $\varepsilon > 0$ let $\{x_1, \dots, x_n\}$ be a maximal set of $2\varepsilon$-separated points in the Euclidean ball $\bar B_2^d$, the distance measured in $\ell_2^d$. For every multi-index $\theta \in \{0,1\}^n$, we define
\[
 f_{\theta}(x) := \sum_{j=1}^n \theta_j \varphi^{\alpha}_{\varepsilon}(x-x_j).
\]
By construction of $\varphi_{\varepsilon}^{\alpha}$, it is assured that $f_{\theta} \in \Lip_{\alpha}(\Omega)$ and $\norm{f_{\theta}}_{\Lip_{\alpha}} \leq 1$. Moreover, we see immediately that $\norm{f_{\theta}}_{\infty}=c_{\alpha} e^{-1}\varepsilon^{\alpha}$, and
\[
 \norm{f_{\theta} - f_{\theta'}}_{\infty} \ge  c_{\alpha} e^{-1}\varepsilon^{\alpha} =: \varepsilon_1
\]
for $\theta \neq \theta'$. Therefore, the set $\{f_{\theta}: \theta \in \{0,1\}^n\}$ consists of $2^n$ functions with mutual distances greater than or equal to $\varepsilon_1$.

Now choose $\varepsilon$ such that
\[
 e_{k+1}(\bar B_2^d, \ell_2^d) < 2\varepsilon < e_k(\bar B_2^d, \ell_2^d).
\]
Then, for $n$ as above, we have $2^k \geq n > 2^{k-1}$, and
\[
 2^{n-1} < N_{\varepsilon_1/2}(\bar B_{\Lip_{\alpha}(\Omega)},L_{\infty}).
\]
We conclude
\[
 e_n(id: \Lip_{\alpha}(\Omega) \to L_{\infty}(\Omega)) > \varepsilon_1/2 > c'_{\alpha} e_{k+1}(id: \ell_2^d \to \ell_2^d)^{\alpha}
\]
for $c_\alpha'=c_\alpha/(4e).$
Now it follows immediately from the estimate above and Lemma \ref{lem:schuett} that
\[
 e_n(id: \Lip_{\alpha}(\Omega) \to L_{\infty}(\Omega)) \gtrsim 2^{-\alpha(k+1)/d} \gtrsim n^{-\alpha/d}.
\]
 
\end{proof}

Now consider ridge functions with Lipschitz profile as given by the class $\Rs{\alpha,p}$.

\begin{theorem}\label{res:ridge_entropy}
Let $d$ be a natural number, $\alpha > 0$, and $0 < p \leq 2$. Then, for any $k\in \N$,
\[
 \frac 12 \max \{ e_{2k}(\bar B_p^d,\ell_2^d), e_{2k}(\bar B_{\alpha},L_\infty) \}
 \leq e_{2k}(\Rs{\alpha,p},L_\infty)
 \leq
 e_{k}(\bar B_p^d,\ell_2^d)^{\min\{\alpha,1\}} + e_{k}(\bar B_{\alpha},L_\infty) \;.
\]
\end{theorem}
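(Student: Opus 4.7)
For the lower bound, the plan is to produce two isometric embeddings into $\Rs{\alpha,p}$ and then conclude monotonicity of entropy numbers under isometric injections. First I would embed the univariate class: given any $g\in\bar B_\alpha$, the function $x\mapsto g(e_1\cdot x)$ lies in $\Rs{\alpha,p}$ because $e_1=(1,0,\dots,0)\in\bar B_p^d$, and since $x_1$ sweeps out all of $[-1,1]$ as $x$ ranges over $\bar B_2^d$, this embedding preserves the $L_\infty$-norm. Second, I would embed the $\ell_p^d$-ball using the fixed linear profile $g_0(t)=t$; a direct check (or the mean-value theorem combined with \eqref{f60}) shows $\|g_0\|_{\Lip_\alpha[-1,1]}\le 1$, and therefore $f_a(x):=a\cdot x$ belongs to $\Rs{\alpha,p}$ for each $a\in\bar B_p^d$. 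Cauchy--Schwarz (saturated at $x=(a-a')/\|a-a'\|_2$) gives $\|f_a-f_{a'}\|_{L_\infty(\bar B_2^d)}=\|a-a'\|_2$, again an isometric embedding. Combining the two embeddings yields $e_n(\Rs{\alpha,p},L_\infty)\ge\max\{e_n(\bar B_\alpha,L_\infty),\,e_n(\bar B_p^d,\ell_2^d)\}$ for every $n$, in particular the claimed bound at $n=2k$.

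For the upper bound, I would combine an $\varepsilon_1$-net $\{a_1,\dots,a_M\}\subset\bar B_p^d$ of cardinality $M\le 2^{k-1}$ with respect to $\ell_2^d$ and an $\varepsilon_2$-net $\{g_1,\dots,g_N\}\subset\bar B_\alpha$ of cardinality $N\le 2^{k-1}$ with respect to $L_\infty[-1,1]$, where $\varepsilon_1$ and $\varepsilon_2$ are just above $e_k(\bar B_p^d,\ell_2^d)$ and $e_k(\bar B_\alpha,L_\infty)$, respectively. For $f=g(a\cdot)\in\Rs{\alpha,p}$, pick the indices $i,j$ for which $\|a-a_i\|_2\le\varepsilon_1$ and $\|g-g_j\|_\infty\le\varepsilon_2$, and set $\tilde f(x):=g_j(a_i\cdot x)$. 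Since $\|a_i\|_2\le\|a_i\|_p\le 1$ for $p\le 2$, the argument $a_i\cdot x$ stays in $[-1,1]$ on $\bar B_2^d$, so $g_j(a_i\cdot x)$ is well defined. Then the split
\[
|f(x)-\tilde f(x)|\le|g(a\cdot x)-g(a_i\cdot x)|+|g(a_i\cdot x)-g_j(a_i\cdot x)|
\]
bounds the second term by $\varepsilon_2$ directly, while the first term is controlled by $|g(u)-g(v)|\lesssim|u-v|^{\min\{\alpha,1\}}$ (from $\|g\|_{\Lip_\alpha}\le 1$ via \eqref{f60}, using the mean-value theorem when $\alpha\ge 1$ and H\"older continuity \eqref{beta} when $\alpha<1$) together with $|a\cdot x-a_i\cdot x|\le\|a-a_i\|_2\le\varepsilon_1$. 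The combined candidate set has at most $MN\le 2^{2k-2}$ elements, so the entropy-number definition \eqref{defi:entropy} gives $e_{2k}(\Rs{\alpha,p},L_\infty)\lesssim\varepsilon_1^{\min\{\alpha,1\}}+\varepsilon_2$, and passing to the infima yields the claim.

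The only technical nuisance I foresee is the constant coming from the factor $2$ in the H\"older definition \eqref{beta} in the regime $\alpha<1$; in the regime $\alpha\ge 1$, $|g(u)-g(v)|\le\|g\|_{C^1}|u-v|\le|u-v|$ directly, with no extra constant. The mismatch can be absorbed either by enlarging $\varepsilon_1$ by the corresponding multiplicative constant (which keeps the net cardinality unchanged) or by rescaling. All other pieces are routine: the inclusion $\|a\|_p\le 1\Rightarrow\|a\|_2\le 1$ for $p\le 2$, the product counting of the two nets, and the well-definedness of the profile on $[-1,1]$ throughout.
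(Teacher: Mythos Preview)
Your proposal is correct and matches the paper's approach: the same two isometric embeddings ($g\mapsto g(e_1\cdot\,)$ and $a\mapsto a\cdot x$ with profile $t\mapsto t$) for the lower bound, and the same product-of-nets construction for the upper bound, including your correct observation about the factor~$2$ from the H\"older seminorm \eqref{beta} when $\alpha<1$.

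One minor caveat: the bare assertion that ``monotonicity of entropy numbers under isometric injections'' yields $e_n(\Rs{\alpha,p},L_\infty)\ge e_n(\bar B_p^d,\ell_2^d)$ \emph{without} the factor $\tfrac12$ is slightly over-stated. In general an isometric embedding $\iota:X\hookrightarrow Y$ only gives $e_n(K,X)\le 2\,e_n(\iota(K),Y)$, because covering centers in $Y$ need not lie in $\iota(X)$; one recovers the bound via packing numbers, which is exactly what the paper does and why the stated theorem carries the $\tfrac12$. (For the profile embedding your sharper claim \emph{is} justified, since restriction to the $x_1$-axis provides a norm-one left inverse $L_\infty(\bar B_2^d)\to L_\infty[-1,1]$; no such norm-one retraction onto linear functions is available for the direction embedding.) Since the theorem only asks for the $\tfrac12$ version, your argument delivers it.
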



\begin{proof}
\emph{Lower bounds:} For $\varepsilon > 0$ let $g_1,\dots,g_n$ be a maximal set of univariate Lipschitz functions in $\bar B_{\alpha}$ with mutual distances $\norm{g_i - g_j}_{\infty} > \varepsilon$ for $i\not=j$. 
Now, let $a = (1,0,\dots,0)$ and put $f_i(x) = g_i(a\cdot x)$ for $i=1,\dots,n$.
Then, of course, we have $f_i \in \Rs{\alpha,p}$, and
$$
\norm{f_i - f_j}_{\infty} = \|g_i-g_j\|_{\infty} > \varepsilon\,.
$$
Consequently, the functions $f_1, \dots, f_n$ are $\varepsilon$-separated, as well.
This implies 
$$
e_{2k}(\Rs{\alpha,p},L_\infty) \ge\, \frac12\, e_{2k}(\bar B_{\alpha},L_{\infty})\,.
$$
\begin{detail}
For arbitrary $\delta > 0$, choose $\delta > 0$ such that
\[
 e_{2k}(\bar B_{\alpha},L_{\infty}) -\delta < 2 \varepsilon < e_{2k}(\bar B_{\alpha},L_{\infty}).
\]
Then,
\[
 N_{\varepsilon}(\Rs{\alpha,p}) \geq M_{2\varepsilon}(\Rs{\alpha,p}) \geq M_{2\varepsilon}(\bar B_{\alpha}) \geq N_{2\varepsilon}(\bar B_{\alpha}) > 2^{2k-1}.
\]
By definition of entropy numbers, this implies $\varepsilon < e_{2k}(\Rs{\alpha,p})$. Letting $\delta \to 0$, this further implies
\[
 \frac 12 e_{2k}(\bar B_{\alpha}) \leq e_{2k}(\Rs{\alpha,p}). 
\]
\end{detail}

On the other hand, for $\varepsilon > 0$,  let $a_1, \dots, a_n$ be a maximal set of vectors in $\bar B_p^d$ with pairwise distances $\norm{a_i-a_j}_2 > \varepsilon$. Furthermore,
let $g(t)=t$ and put $ \tilde f_i(x) = g(a_i\cdot  x)$ for $i=1,\dots,n$. Then $\tilde f_i \in \Rs{\alpha,p}$ and
\begin{align*}
\|\tilde f_i- \tilde f_j\|_\infty&=\sup_{x\in \bar B_2^d}|\tilde f_i(x)-\tilde f_j(x)|=\sup_{x\in \bar B_2^d}|g(a_i\cdot  x)-g(a_j\cdot  x)|\\
&=\sup_{x\in \bar B_2^d}|(a_i-a_j)\cdot x|=\|a_i-a_j\|_2 > \varepsilon.
\end{align*}
Thus, the functions $\tilde f_1, \dots, \tilde f_n$ are $\varepsilon$-separated w.r.t. the $L_{\infty}$-norm. This implies
$$
e_{2k}(\Rs{\alpha,p},L_\infty) \ge\,\frac{1}{2}\, e_{2k}(\bar B_p^d,\ell_2^d)\,.
$$

\emph{Upper bound:} Let $1/2>\varepsilon_1, \varepsilon_2>0$ be fixed and put $\varepsilon := \varepsilon_1^{\bar \alpha} + \varepsilon_2$.
Let $\mathcal N=\{g_1,\dots,g_n\}$ be a minimal $\varepsilon_1$-net of $\bar B_{\alpha}$ in the $L_\infty$-norm.
Further, let $\mathcal M=\{a_1,\dots,a_m\}$ be a minimal $\varepsilon_2$-net  of $\bar B_p^d$ in the $\ell_2^d$-norm.

Now, fix some ridge function $f:x \mapsto g(a\cdot  x)$ in $\Rs{\alpha,p}$, i.e. $\|g\|_{\Lip_\alpha}\le 1$ and $\|a\|_p\le 1$.
Then there is a function $g_i\in{\mathcal N}$ with $\|g-g_i\|_\infty\le \varepsilon_1$ and a vector $a_j\in{\mathcal M}$ with $\|a-a_j\|_2\le\varepsilon_2.$ 
Putting this together and writing $\bar\alpha=\min\{\alpha,1\}$, we obtain
\begin{align*}
\|g(a\cdot  x)-g_i(a_j\cdot  x)\|_\infty&\le \sup_{x\in \bar B_2^d}|g(a\cdot  x)-g(a_j\cdot  x)|+|g(a_j\cdot  x)-g_i(a_j\cdot  x)|\\
&\le \sup_{x\in \bar B_2^d} |g|_{\bar\alpha}\cdot|a\cdot  x-a_j\cdot  x|^{\bar\alpha}+\|g-g_i\|_\infty\\
&\le \|a-a_j\|^{\bar\alpha}_2 +\|g-g_i\|_\infty\le \varepsilon_1^{\bar\alpha} + \varepsilon_2 = \varepsilon.
\end{align*}
Hence, the set
$
\{x\to g(a\cdot  x):g\in {\mathcal N}, a\in{\mathcal M}\}
$
is an $\varepsilon$-net of $\Rs{\alpha,p}$ in $L_\infty(\Omega)$ with cardinality
\[
 \#{\mathcal N}\cdot\#{\mathcal M} = N_{\varepsilon_1}(\bar B_{\alpha},L_{\infty}) \cdot N_{\varepsilon_2}(\bar B_p^d, \ell_2^d) \;.
\]
Consequently, $N_{\varepsilon}(\Rs{\alpha,p},L_{\infty}) \leq \#{\mathcal N}\cdot\#{\mathcal M}$ and we conclude that
\[
 e_{2k}(\Rs{\alpha,p},L_{\infty}) \leq e_{k}(\bar B_p^d, \ell_2^d)^{\bar\alpha} + e_{k}(\bar B_{\alpha}, L_{\infty}) \;.
\]
\begin{detail}
For arbitrary $\delta > 0$, choose $\varepsilon_1, \varepsilon_2$ such that
\begin{align*}
 e_{k}(id: \ell_p^d \to \ell_2^d)  &< \varepsilon_1 < e_{k}(id: \ell_p^d \to \ell_2^d) + \delta,\\
 e_{k}(\bar B_{\alpha},L_{\infty}) &< \varepsilon_2 < e_{k}(\bar B_{\alpha},L_{\infty}) + \delta.
\end{align*}
Because of $N_{\varepsilon_1+ \varepsilon_2}(\Rs{\alpha,p}, L_{\infty} \leq 2^{2k-2} < 2^{2k-1}$, we obtain in the limit $\delta \to 0$
\[
 e_{2k}(\Rs{\alpha,p}) \leq \varepsilon_1 + \varepsilon_2 \leq e_{k}(id: \ell_p^d \to \ell_2^d) + e_{k}(\bar B_{\alpha}, L_{\infty}).
\]
\end{detail}
\end{proof}

\begin{remark}
In view of Proposition \ref{res:en_univariate_Lip_derivative}, it is easy to see that Theorem \ref{res:ridge_entropy} keeps valid
if we replace the class $\Rs{\alpha,p}$ by $\Rs{\alpha,p,\kappa}$.
\end{remark}

We exemplify the consequences of Theorem \ref{res:ridge_entropy} by considering the case $p=2$; for $0 < p < 2$ estimates would be similar. As the corollary below shows, entropy numbers of ridge functions asymptotically decay as fast as those of their profiles. In contrast to multivariate Lipschitz functions on $\Omega$, the dimension $d$ does not appear in the decay rate's exponent. It only affects how long we have to wait until the asymptotic decay becomes visible.

\begin{corollary}\label{thm33} Let $d$ be a natural number and $\alpha > 0$. For the entropy numbers of $\Rs{\alpha,2}$ in $L_{\infty}(\Omega)$ we have
\be\label{eq:entr}
\max(k^{-\alpha},2^{-k/d})\lesssim e_k(\Rs{\alpha,2},L_\infty)\lesssim
\begin{cases}
  1 &                 : k \le c_{\alpha} d\log d,\\
  k^{-\alpha} & : k \ge c_{\alpha} d\log d\,,
\end{cases}
\ee
for some universal constant $c_{\alpha}>0$ which does not depend on $d$.
\end{corollary}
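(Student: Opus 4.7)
The plan is to derive the corollary by specialising Theorem \ref{res:ridge_entropy} to $p=2$ and plugging in the asymptotics of Lemma \ref{lem:schuett} (for $e_k(\bar B_2^d,\ell_2^d)$) and Lemma \ref{lem:en_univariate_Lip} (for $e_k(\bar B_\alpha, L_\infty) \asymp k^{-\alpha}$). Monotonicity of entropy numbers absorbs the nuisance factor $2$ between the indices on the two sides of Theorem \ref{res:ridge_entropy}.

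For the lower bound I would apply Theorem \ref{res:ridge_entropy} with index $\lceil k/2 \rceil$ in place of $k$, noting that $e_{2\lceil k/2\rceil}(\Rs{\alpha,2}, L_\infty) \leq e_k(\Rs{\alpha,2}, L_\infty)$ by monotonicity. The theorem then yields
$$
e_k(\Rs{\alpha,2}, L_\infty) \;\geq\; \tfrac{1}{2}\,\max\bigl\{e_{2\lceil k/2\rceil}(\bar B_2^d, \ell_2^d),\; e_{2\lceil k/2\rceil}(\bar B_\alpha, L_\infty)\bigr\}.
$$
By Lemma \ref{lem:schuett} with $p=q=2$ the first term inside the maximum is $\gtrsim 2^{-k/d}$ uniformly in $k$ and $d$ (the middle regime gives a constant, the large-$k$ regime gives $2^{-2\lceil k/2\rceil/d} \asymp 2^{-k/d}$), while Lemma \ref{lem:en_univariate_Lip} gives $\gtrsim k^{-\alpha}$ for the second term. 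Taking the maximum delivers the claimed lower bound.

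For the upper bound the inclusion $\Rs{\alpha,2} \subseteq \bar B_{L_\infty(\Omega)}$ (immediate from $\|g\|_\infty \leq \|g\|_{\Lip_\alpha} \leq 1$) gives the trivial estimate $e_k(\Rs{\alpha,2},L_\infty) \leq 1$, which is all we need for $k \leq c_\alpha d \log d$. For $k \geq c_\alpha d \log d$ I would instead apply Theorem \ref{res:ridge_entropy} with index $\lfloor k/2 \rfloor$ to obtain
$$
e_k(\Rs{\alpha,2}, L_\infty) \;\lesssim\; e_{\lfloor k/2\rfloor}(\bar B_2^d, \ell_2^d)^{\min\{\alpha,1\}} + e_{\lfloor k/2\rfloor}(\bar B_\alpha, L_\infty) \;\lesssim\; 2^{-k\min\{\alpha,1\}/(2d)} + k^{-\alpha},
$$
using Lemma \ref{lem:schuett} in the regime $\lfloor k/2\rfloor \geq d$ and Lemma \ref{lem:en_univariate_Lip}.

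The only real task is then to choose $c_\alpha$ so that the exponential term is dominated by the polynomial $k^{-\alpha}$ throughout the second regime. Taking $\log_2$ of $2^{-k\min\{\alpha,1\}/(2d)} \leq k^{-\alpha}$ turns this into the linear-in-$k$ condition $k\,\min\{\alpha,1\} \geq 2\alpha d \log_2 k$, which is easily seen to hold once $k \geq c_\alpha d\log d$ for a suitably large constant $c_\alpha$ depending only on $\alpha$ (e.g.\ $c_\alpha = 4\alpha/\min\{\alpha,1\}$ works for $d$ large enough, and can be inflated further to cover small $d$). This bookkeeping step is the only mildly subtle point in the argument; everything else is a direct invocation of Theorem \ref{res:ridge_entropy} together with Lemmas \ref{lem:schuett} and \ref{lem:en_univariate_Lip}.
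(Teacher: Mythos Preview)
Your proposal is correct and follows essentially the same route as the paper: both derive the corollary directly from Theorem~\ref{res:ridge_entropy} combined with Lemmas~\ref{lem:schuett} and~\ref{lem:en_univariate_Lip}, with the only nontrivial step being the verification that $2^{-\min\{\alpha,1\}k/d}\lesssim k^{-\alpha}$ once $k\ge c_\alpha d\log d$. Your careful handling of the index shift via $\lceil k/2\rceil$ and $\lfloor k/2\rfloor$ is just an explicit version of what the paper absorbs into the constants.
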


Before we turn to the proof, let us note that \eqref{eq:entr} implies that
$$
e_k(\Rs{\alpha,2},L_\infty) \asymp 1\quad \text{if}\quad k\le d,
$$
and
$$
e_k(\Rs{\alpha,2},L_\infty) \asymp k^{-\alpha} \quad \text{if}\quad k\ge  c_{\alpha} d\ln d.
$$
Hence, entropy numbers of ridge functions are guaranteed to decay like those of their profiles for $k\ge c_{\alpha}d\log d$---and surely behave differently for $k\le d$.

\begin{proof}[Proof of Corollary \ref{thm33}]
The lower bound in \eqref{eq:entr} follows from Theorem \ref{res:ridge_entropy} combined with Lemma \ref{lem:schuett}, and Lemma \ref{lem:en_univariate_Lip}.
The upper bounds are proven in the same manner, using the simple fact that for every $\alpha>0$ there are two constants $c_\alpha,c'_\alpha>0$, such that $k\ge c_\alpha d\log d$ implies that $2^{-\min\{\alpha,1\}k/d}\le c_\alpha'k^{-\alpha}.$

\begin{detail}
To see the upper bound, let $\mathcal M$, $\mathcal N$ as in the proof of Theorem \ref{res:ridge_entropy}. It follows from \cite[Chapter 15]{lomavo96} that one may assume that $\#{\mathcal M}\le (3/\varepsilon)^d$. Furthermore, by Lemma \ref{lem:en_univariate_Lip}, we observe that there is a constant $c>0$, such that 
\[
 \#{\mathcal N}\le 2^{c\varepsilon^{-1/\alpha}}.
\]

In detail, the argument is as follows. There is a constant $c' > 0$ such that $e_k < c'k^{-\alpha}$. By definition of entropy numbers, for all $\delta > e_k$ we have $N_{\delta}(B_{\text{Lip}\del{\intcc{-1,1}}}) \leq 2^k$. Now put $\varepsilon = c'k^{-\alpha}$, $c = c'^{1/\alpha}$ and choose $\mathcal N$ an optimal $\varepsilon$-net.

Hence, for every $1/2>\varepsilon>0$, there is a $2\varepsilon$-net of $\Rs{\alpha,2}$ in $L_\infty(\Omega)$ with cardinality
at most $(3/\varepsilon)^d\cdot 2^{c\varepsilon^{-1/\alpha}}.$

Let us convert this into an upper bound for the entropy numbers of $\Rs{\alpha,2}$.
Let $\varepsilon_0 >0$ such that $(6/\varepsilon_0)^{1/\alpha} = d\log d$. Taking the logarithm gives the estimate $\log d \geq (2\alpha)^{-1}\log(6/\varepsilon_0)$. If we plug this back into the equation and exponentiate we obtain
\[
 (6/\varepsilon_0)^d \leq 2^{2\alpha (6/\varepsilon_0)^{1/\alpha}}
\]
and, by monotonicity, this estimate remains valid for all $0<\varepsilon \leq \varepsilon_0$.
Consequently, we find $C$ such that $N_{\varepsilon}(\Rs{\alpha,2},L_{\infty}) \leq 2^{k-1}$ for $\varepsilon = Ck^{-\alpha}$, provided $k \geq c_{\alpha} d\log d$ with the constant $c_{\alpha} = (C/6)^{1/\alpha}$ 
independent of $k$ and $d$. We conclude $e_k(\Rs{\alpha,2},L_{\infty}) \lesssim k^{-\alpha}$.

Observe that for $\varepsilon_0^{1/\alpha} = 6^{1/\alpha}(d\log d)^{-1}$ we have
\[
 \log\del{(6/\varepsilon_0)^{1/\alpha}} = \log d + \log\log d \leq 2 \log d
\]
which implies $\log d \geq \frac {1}{2\alpha} \log(6/\varepsilon_0)$. Therefore, $(6/\varepsilon_0)^{1/\alpha} \geq (2\alpha)^{-1} d \log(6/\varepsilon_0)$. The estimate holds also true for all $\varepsilon \leq \varepsilon_0$. To see this put $h(x) = \alpha x^{1/\alpha} (\log x)^{-1}$. The function $h(6/\varepsilon)$ is decreasing for all $0 < \varepsilon < 3\cdot 2^{1-\alpha}$, as
\[
 \od{}{\varepsilon}\del{h(6/\varepsilon)}  = -6\frac{ (6/\varepsilon)^{1/\alpha -1} \del{\log(6/\varepsilon)-\alpha}}{\del{\varepsilon \log(6/\varepsilon)}^2}
\]
is negative for $\log \frac{6}{\varepsilon} > \alpha$, i.e. $\varepsilon < 3\cdot 2^{1-\alpha}$.
Hence $\alpha (6 /\varepsilon)^{1/\alpha} \del{\log 6 / \varepsilon}^{-1} \geq d/2$ for $\varepsilon < \varepsilon_0$, provided $d > 1$.
This yields
\[
 \log N_{\varepsilon}\del{\Rs{\alpha,2},L_{\infty}} \leq d \log(6/\varepsilon) + \frac{2c}{\varepsilon} \leq \frac{2 \cdot 6^{1/\alpha} + 2c}{\varepsilon^{1/\alpha}}.
\]
Consequently, we find $C > 0$ such that $N_{\varepsilon}\del{\Rs{\alpha,2},L_{\infty}} \leq 2^{k-1}$ holds true for $\varepsilon = C k^{-\alpha}$, provided $k \geq \del{\frac C6}^{1/\alpha} d\log(d)$. We conclude
\[
 e_k(\Rs{\alpha,2},L_{\infty}) \lesssim k^{-\alpha},
\]
provided $k \geq c_{\alpha} \del{d\log d}$. Note that the constant $c_{\alpha} =(C/6)^{1/\alpha}$ is independent of $d$.
\end{detail}
\end{proof}
Summarizing this section, the classes of ridge functions with Lipschitz profiles of order $\alpha$ are essentially 
as compact as the class of univariate Lipschitz functions of order $\alpha$. Consequently, when speaking in terms of metric
entropy, these classes of functions must be much smaller than the class of multivariate Lipschitz functions of order $\alpha$.

\section{Sampling numbers of ridge functions}
\label{sec:sampling}

In light of Section \ref{sec:entropy}, one is led to think that efficient sampling of ridge functions should be feasible. Moreover, their simple, two-component structure naturally suggests a two-step procedure: first, use a portion of the available function samples to identify either the profile or the direction; then, use the remaining samples to unveil the other component.

However, in Subsection \ref{sec:sampling1}, we learn that for ridge functions in the class $\Rs{\alpha,p}$, sampling is almost as hard as sampling of general multivariate Lipschitz functions on the Euclidean unit ball. In particular, such two-step procedures as sketched above cannot work in an efficient manner. It needs additional assumptions on the ridge profiles or directions. We discuss this in Subsection \ref{sec:sampling2}.

\subsection{Sampling of functions in $\Rs{\alpha,p}$}
\label{sec:sampling1}

As usual, throughout the section let $\alpha > 0$ be the Lipschitz smoothness of profiles, $s = \llfloor \alpha \rrfloor$ the order up to which derivatives exist, and let $0 < p \leq 2$ indicate the $p$-norm such that ridge directions are contained in the closed $\ell_p^d$-ball.

The algorithms we use to derive upper bounds are essentially the same as those which are known to be optimal for general multivariate Lipschitz functions. Albeit, the ridge structure allows a slightly improved analysis at least in case $p < 2$. 

\begin{proposition}\label{res:sampling_upper_bound} Let $\alpha>0$ and $0<p\leq 2$. For $n \geq {d+s \choose s}$ sampling points the $n$-th sampling number is bounded from above by
\be\label{eq:sampl}
g_{n,d}^{\text{lin}}(\Rs{\alpha, p},L_{\infty}) \leq e_{k-\Delta}(\bar B_2^d, \ell_{p'}^d)^{\alpha}  \,,
\ee 
where $k = \lfloor \log n \rfloor +2$, $\Delta = 1+\lceil \log {d+s \choose s} \rceil$, and $p'$ is the dual index of $p$.
\end{proposition}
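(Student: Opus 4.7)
The plan is to approximate $f\in\Rs{\alpha,p}$ locally by polynomials of total degree $s=\llfloor\alpha\rrfloor$ on a fine covering of $\bar B_2^d$, exploiting the sharpened Taylor estimate of Lemma \ref{lem:taylorapprox}(ii). The key point is that this lemma measures the local error in the $\ell_{p'}^d$-norm, which is exactly the norm in the entropy number on the right-hand side of \eqref{eq:sampl}.

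Set $\varepsilon := e_{k-\Delta}(\bar B_2^d,\ell_{p'}^d)$ and $m := \binom{d+s}{s}$. By the definition of the entropy number I would choose centres $x_1^0,\dots,x_N^0$ with $N := 2^{k-\Delta-1}$ such that $\bar B_2^d \subseteq \bigcup_i(x_i^0 + \varepsilon\bar B_{p'}^d)$, and refine the covering to a disjoint partition $\bar B_2^d = V_1\sqcup\cdots\sqcup V_N$ by passing to Voronoi cells in $\ell_{p'}^d$, so that $V_i\subseteq x_i^0 + \varepsilon\bar B_{p'}^d$. On each cell I would then place $m$ unisolvent sample points $y_{i,1},\dots,y_{i,m}$ obtained from a fixed reference configuration inside $\bar B_{p'}^d$ by affine scaling and translation. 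The algorithm is $S_n f(x) := P_i(x)$ on $V_i$, where $P_i\in\mathcal{P}_s(\R^d)$ is the unique polynomial of total degree $\le s$ with $P_i(y_{i,j})=f(y_{i,j})$ for all $j$. This map is linear in the sample values and uses at most $Nm \le 2^{\lfloor\log n\rfloor-\lceil\log m\rceil}\cdot m \le n$ samples (by the definition of $k$ and $\Delta$), while the hypothesis $n\ge\binom{d+s}{s}$ guarantees $N\ge 1$.

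For the error bound on a cell $V_i$ I would split
$$ f(x) - S_n f(x) = \bigl(f(x) - T_{s,x_i^0}f(x)\bigr) + \bigl(T_{s,x_i^0}f(x) - P_i(x)\bigr).$$
Lemma \ref{lem:taylorapprox}(ii) bounds the first summand by $\tfrac{2}{s!}\|x - x_i^0\|_{p'}^\alpha \le \tfrac{2}{s!}\varepsilon^\alpha$. For the second, observe that $P_i - T_{s,x_i^0}f\in\mathcal{P}_s$ and its value at each node $y_{i,j}$ equals $f(y_{i,j}) - T_{s,x_i^0}f(y_{i,j})$, which is again bounded by $\tfrac{2}{s!}\varepsilon^\alpha$ by the same lemma. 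Because the nodes are affine images of a fixed reference unisolvent set in $\bar B_{p'}^d$, the Lebesgue constant of the interpolation on $x_i^0 + \varepsilon\bar B_{p'}^d$ is a universal number $L_s$, hence $\|T_{s,x_i^0}f - P_i\|_{L_\infty(V_i)} \le L_s\cdot\tfrac{2}{s!}\varepsilon^\alpha$. Summing both estimates and taking the supremum over $f\in\Rs{\alpha,p}$ delivers the required bound $\varepsilon^\alpha = e_{k-\Delta}(\bar B_2^d,\ell_{p'}^d)^\alpha$, up to the $\alpha$-dependent factor $\tfrac{2(1+L_s)}{s!}$.

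The main obstacle I foresee is the clean construction of the unisolvent interpolation scheme inside each $V_i$: the $m$ nodes must lie inside the cell, not merely inside $x_i^0+\varepsilon\bar B_{p'}^d$, since cells adjoining $\partial\bar B_2^d$ may be thin, and the associated Lebesgue constant must be controlled. The case $0<p<1$, where $p'=\infty$, leads to interpolation on $\ell_\infty^d$-balls and is most conveniently handled by a tensor-product Chebyshev pattern restricted to total degree $\le s$. A secondary issue is whether the implicit constant $\tfrac{2(1+L_s)}{s!}$ can be absorbed so as to match the constant-free $\le$ in \eqref{eq:sampl}; I would expect this to be done by a minor bookkeeping adjustment (e.g.\ slightly enlarging $\Delta$, or taking the bound with an implicit $\alpha$-dependent factor which one checks is harmless for the subsequent applications).
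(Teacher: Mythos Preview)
Your overall architecture matches the paper: cover $\bar B_2^d$ by $2^{k-\Delta-1}$ balls in $\ell_{p'}^d$, pass to a disjoint partition, and on each piece approximate $f$ by a degree-$s$ polynomial whose error is controlled by Lemma~\ref{lem:taylorapprox}(ii). The sample count $N\binom{d+s}{s}\le n$ is correct.

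The difference is in how the local polynomial is produced. You interpolate at a unisolvent set and then split $f-P_i=(f-T_{s,x_i^0}f)+(T_{s,x_i^0}f-P_i)$, paying a Lebesgue constant $L_s$ for the second piece. The paper instead takes $P_i=T_{s,x_i^0}f$ \emph{itself}: the derivatives $D^\gamma f(x_i^0)$ for $|\gamma|\le s$ are approximated to arbitrary precision by finite-order differences using $\binom{d+s}{s}$ function values clustered at $x_i^0$ (with a reference to \cite{V2013}). This collapses your second summand to zero (in the limit of vanishing difference step) and delivers the bound $\tfrac{2}{s!}\varepsilon^\alpha\le\varepsilon^\alpha$ directly from Lemma~\ref{lem:taylorapprox}(ii), with no extra constant.

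The two obstacles you flag are precisely what the paper's mechanism sidesteps. Since the finite-difference stencil can be taken of arbitrarily small radius around the centre $x_i^0$, there is no issue with thin cells or with nodes escaping $\bar B_2^d$; one only needs $x_i^0\in\mathring\Omega$, which can be arranged. And since no interpolation operator enters, there is no Lebesgue constant to control --- which matters, because your claim that $L_s$ is ``a universal number'' independent of $d$ is not obvious for total-degree interpolation in $\R^d$ on an $\ell_{p'}^d$-ball, and would itself require an argument. Your route can likely be pushed through, but the bound would carry an $(\alpha,d)$-dependent factor rather than the clean inequality \eqref{eq:sampl}.
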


\begin{proof}

\emph{Case $\alpha \leq 1$:} In this case, $s=0$ and $\Delta = 1.$
We choose sampling points $x_1, \dots, x_{2^{k-2}}$ such that they form an $\varepsilon$-covering of $\bar B_2^d$
in $\ell_{p'}^d$. Given this covering, we construct (measurable) sets $U_1, \dots, U_{2^{k-2}}$ such that
$U_i \subseteq x_i+\varepsilon \bar B_{p'}^d$ for $i=1,\dots,2^{k-2}$ and
\[
  \bigcup_{i=1}^{2^{k-2}} \Bigl(x_i+\varepsilon \bar B_{p'}^d\Bigr) = \bigcup_{i=1}^{2^{k-2}} U_i, \qquad U_i \cap U_j = \emptyset \; \text{for } i \neq j.
\]
Now we use piecewise constant interpolation: we approximate $f = g(a\cdot ) \in \Rs{\alpha,p}$ by $S f := \sum_{i=1}^{2^{k-2}} f(x_i) \1_{U_i}$. Then,
\begin{align}\label{f7}
 \norm{f - Sf}_{\infty} &= \sup_{i=1,\dots,2^{k-1}} \sup_{x \in  U_i} \abs{f(x) - f(x_i)}\\
 &\leq \sup_{i=1,\dots,2^{k-2}} \sup_{x \in  U_i} \norm{g}_{\Lip_\alpha} \norm{a}^\alpha_p \norm{x-x_i}_{p'}^{\alpha} \leq \varepsilon^{\alpha}.
\end{align}
The smallest $\varepsilon$ is determined by the $(k-1)$-th entropy number $e_{k-1}(\bar B_2^d, \ell_{p'}^d)$. Consequently,
\be\label{f8}
 g^{\text{lin}}_{n,d}(\Rs{\alpha,p},L_{\infty}) \leq g^{ \text{lin}}_{2^{k-2},d}(\Rs{\alpha,p},L_{\infty})\leq e_{k-1}(\bar B_2^d,\ell_{p'}^d)^{\alpha}.
\ee

\emph{Case $\alpha > 1$:} 
We choose the sampling points $x_1, \dots, x_{2^{k-\Delta-1}}$ and the sets $U_1,\dots,U_{2^{k-\Delta-1}}$ as above.
However, instead of piecewise constant interpolation we apply on each of the sets $U_i \subseteq x_i+\varepsilon \bar B_{p'}^d$ a Taylor formula of order $s$ around the center $x_i$.

That is, to approximate a given $f=g(a\cdot ) \in \Rs{\alpha,p}$ we set $Sf := \sum_{i=1}^{2^{k-\Delta-1}} T_{x_i,s} f \1_{U_i}$. Then, by Lemma \ref{lem:taylorapprox} (ii), we have
\begin{align*}
 \norm{f - Sf}_{\infty} &= \sup_{i=1,\dots,2^{k-\Delta-1}} \sup_{x \in  U_i} \abs{f(x) - T_{x_i,s}f(x)} \leq \frac{1}{s!} \norm{x - x_i}_{p'}^{\alpha} \leq \varepsilon^{\alpha}.
\end{align*}
It takes $2^{k-\Delta-1} {d+s \choose s} \leq n$ function values to approximate all the $T_{x_i,s}$ above up to arbitrary precision by finite-order differences,
cf.\ \cite{V2013}.

The smallest $\varepsilon$ is now determined by the $(k-\Delta)$-th entropy number $e_{k-\Delta}(\bar B_2^d,\ell_{p'}^d)$. We conclude
\begin{align}
 g^{\mathrm{lin}}_{n,d}(\Rs{\alpha,p},L_{\infty}) \leq  g^{\mathrm{lin}}_{2^{k-\Delta-1},d}(\Rs{\alpha,p},L_{\infty}) \leq e_{k-\Delta}(\bar B_2^d, \ell_{p'}^d)^{\alpha}.  
\end{align}

\end{proof}

We turn to an analysis of lower bounds for the classes $\Rs{\alpha,p}$. Our strategy is to find ``bad'' directions which map, for a given budget $n \in \N$,  all possible choices of $n$ sampling points to a small range of $[-1,1]$. There, we let the ``fooling'' profiles be zero; outside of that range, we let the profiles climb as steep as possible. Proposition \ref{res:sampling_lower_bound} below 
states the lower bound that results from this strategy, provided that the ``bad'' directions are given by some  $\mathcal M \subseteq \bar B_p^d \setminus \{0\}$. We discuss appropriate choices of $\mathcal M$ later. In the sequel, we use the mapping $\Psi:\R^d\setminus\{0\}\to \Sphere_2$ defined by $x\mapsto x/\|x\|_2$\,.

\begin{proposition}\label{res:sampling_lower_bound}
Let $\alpha>0$, $0<p\leq 2$, and $\mathcal M \subseteq \bar B_p^d \setminus \{0\}$. Then, for all natural numbers $k$ and $n$ with $n\leq 2^{k-1}$, we have
\[
 g^{\text{ada}}_{n,d}(\Rs{\alpha,p},L_{\infty}) \geq c_{\alpha} \inf_{a \in \mathcal M} \|a\|_2^{\alpha} \cdot e_k(\Psi(\mathcal M),\ell_2^d)^{2\alpha}\,.
\]
The constant $c_{\alpha}$ depends only on $\alpha$.
\end{proposition}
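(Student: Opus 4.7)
The plan follows the strategy sketched by the authors immediately before the statement. Given any $S\in\AlgAdap_n$, applying $S$ to the zero function produces a deterministic collection of adaptive sample points $x_1^0,\ldots,x_n^0\in\Omega$, and the convention $\varphi_S(0)=0$ forces $S(0)=0$. Any $f^*\in\Rs{\alpha,p}$ satisfying $f^*(x_i^0)=0$ for every $i$ is therefore indistinguishable from $0$ by $S$: adaptivity makes the algorithm take the same samples, read the same zero responses, and output $0$. Consequently $\|f^*-S(f^*)\|_\infty=\|f^*\|_\infty$, and the proof reduces to producing a single ridge function $f^*=g(a\cdot)$, with $a\in\mathcal M$ and $g\in\Lip_\alpha[-1,1]$ of norm at most one, vanishing at all the $x_i^0$ and with $\|f^*\|_\infty\gtrsim \inf_{a\in\mathcal M}\|a\|_2^\alpha\,\varepsilon^{2\alpha}$, where $\varepsilon:=e_k(\Psi(\mathcal M),\ell_2^d)$.

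For the choice of direction, consider the map $T\colon\hat a\mapsto(\hat a\cdot x_i^0)_{i=1}^n$, which is $1$-Lipschitz from $\ell_2^d$ into $\ell_\infty^n$ because $\|x_i^0\|_2\leq 1$. The decisive claim is the existence of $\hat a\in\Psi(\mathcal M)$ and $t_0\in\R$ for which the pulled-back samples cluster quadratically well,
\[
\max_{i=1,\ldots,n}|\hat a\cdot x_i^0-t_0|\;\leq\;C\,\varepsilon^{2}.
\]
The packing analogue of the entropy hypothesis, $M_{\varepsilon'}(\Psi(\mathcal M),\ell_2^d)>2^{k-1}\geq n$ for $\varepsilon'$ slightly less than $\varepsilon/2$, together with a pigeonhole argument on the image $T(\Psi(\mathcal M))\subset\R^n$, supplies two points $\hat a_1,\hat a_2\in\Psi(\mathcal M)$ at $\ell_2^d$-distance $\sim\varepsilon$ whose $T$-images agree in $\ell_\infty^n$ only to first order. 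Combining the chord--arc identity $1-\hat a_1\cdot\hat a_2=\tfrac12\|\hat a_1-\hat a_2\|_2^2$ on $\Sphere_2$ with a renormalisation of a suitable tangential combination of $\hat a_1,\hat a_2$ back onto the sphere upgrades this to the desired second-order estimate, with $\hat a$ obtained as the renormalised combination and $t_0$ as the common approximate value of the samples.

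Given such $\hat a$ and $t_0$, pick any $a\in\mathcal M$ with $a/\|a\|_2=\hat a$; the factor $\inf_{a\in\mathcal M}\|a\|_2^\alpha$ absorbs the freedom in this choice. The scaled samples $s_i:=a\cdot x_i^0=\|a\|_2(\hat a\cdot x_i^0)$ then lie inside an interval $J\subset[-\|a\|_2,\|a\|_2]\subset[-1,1]$ of length at most $2C\|a\|_2\varepsilon^2$ centred at $\|a\|_2 t_0$. Take a standard smooth bump $g\in\Lip_\alpha[-1,1]$ of width $c\|a\|_2\varepsilon^2$ placed in $[-1,1]\setminus J$ adjacent to $J$, scaled so that $\|g\|_{\Lip_\alpha[-1,1]}\leq 1$; the standard scaling estimate gives $\|g\|_\infty\asymp (\|a\|_2\varepsilon^2)^\alpha=\|a\|_2^\alpha\varepsilon^{2\alpha}$. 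By construction $g$ vanishes outside its support and hence at every $s_i$, so $f^*=g(a\cdot)\in\Rs{\alpha,p}$ is the required fooling function.

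The main obstacle is the quadratic clustering estimate of the second paragraph. The trivial bound from $1$-Lipschitzness of $T$ only yields a linear-in-$\varepsilon$ upper estimate on $\max_i|\hat a\cdot x_i^0-t_0|$, so that an additional $\varepsilon$-factor must be extracted from the spherical geometry of $\Psi(\mathcal M)\subset\Sphere_2$. This geometric gain is precisely what accounts for the exponent $2\alpha$ in the lower bound, as opposed to the exponent $\alpha$ governing the matching-form upper bound of Proposition~\ref{res:sampling_upper_bound}; without it the construction would only reach the weaker value $\|a\|_2^\alpha\varepsilon^\alpha$.
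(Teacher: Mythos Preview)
Your overall strategy---apply $S$ to the zero function, read off the deterministic sample points $x_1^0,\dots,x_n^0$, then exhibit a ridge function in $\Rs{\alpha,p}$ vanishing at all of them---is correct and is exactly how the paper proceeds. The fatal gap is the ``quadratic clustering'' claim of your second paragraph: it is simply false. Take $d=2$, $\mathcal M=\Psi(\mathcal M)$ the unit circle, and let the $n$ sample points be equally spaced on that circle; then for the relevant $k$ one has $\varepsilon=e_k(\Psi(\mathcal M),\ell_2^2)\asymp n^{-1}$, yet for \emph{every} unit vector $\hat a$ the projections $\hat a\cdot x_i^0=\cos(\theta_i-\theta_{\hat a})$ fill essentially all of $[-1,1]$ and never lie in an interval of length $O(n^{-2})$. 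Your pigeonhole-plus-renormalisation outline cannot save this: the pigeonhole step yields at best two directions $\hat a_1,\hat a_2$ with $\|\hat a_1-\hat a_2\|_2\sim\varepsilon$, whence $|T(\hat a_1)-T(\hat a_2)|_\infty\lesssim\varepsilon$ by $1$-Lipschitzness of $T$, but the individual coordinates $\hat a_1\cdot x_i^0$ need not be close to one another across different $i$, so there is no ``common approximate value $t_0$'' to begin with, and no spherical identity converts a first-order agreement of two vectors into second-order clustering of a single vector's projections.

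The paper's construction avoids clustering altogether by placing the fooling profile at the \emph{extreme end} of the range. Since $n\le 2^{k-1}$ centres cannot $\varepsilon$-cover $\Psi(\mathcal M)$ when $\varepsilon<e_k(\Psi(\mathcal M),\ell_2^d)$, there is some $\hat a\in\Psi(\mathcal M)$ with $\|x_i^0-\hat a\|_2>\varepsilon$ for every $i$. The elementary identity $\|x-\hat a\|_2^2=\|x\|_2^2+1-2\,\hat a\cdot x\le 2-2\,\hat a\cdot x$ then gives $\hat a\cdot x_i^0<1-\varepsilon^2/2$, so $a\cdot x_i^0<\|a\|_2(1-\varepsilon^2/2)$ for any $a\in\mathcal M$ with $\Psi(a)=\hat a$. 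The one-sided profile $g_{a,\varepsilon}(t)=\vartheta_\alpha\bigl[(t-\|a\|_2(1-\varepsilon^2/2))_+\bigr]^\alpha$ therefore vanishes at every $a\cdot x_i^0$, while $\|g_{a,\varepsilon}(a\cdot)\|_\infty=g_{a,\varepsilon}(\|a\|_2)\asymp\|a\|_2^\alpha\varepsilon^{2\alpha}$. The quadratic gain you correctly identify as geometric does come from the chord--arc relation, but it enters as the fact that the linear functional $x\mapsto\hat a\cdot x$ on $\bar B_2^d$ takes values within $\varepsilon^2/2$ of its maximum $1$ only on the $\varepsilon$-ball about $\hat a$---not as a clustering of all projections near some interior $t_0$.
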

\begin{proof} 
Let us first describe the ``fooling'' profiles in detail. For each $a \in \mathcal M$ and $\varepsilon<1$, we define a function 
\begin{align}\label{eq:bad_function}
 g_{a,\varepsilon}(t)=\vartheta_\alpha \big[(t-\|a\|_2(1-\varepsilon^2/2))_+\big]^{\alpha} 
\end{align}
on the interval $[-1,1]$. 
The factor $\vartheta_\alpha$ assures that $\|g_{a,\varepsilon}\|_{\Lip_\alpha[-1,1]} = 1$. Put 
$f_{a,\varepsilon}(x) = g_{a,\varepsilon}(a\cdot x)$. By construction, we have that $f_{a,\varepsilon} \in \Rs{\alpha,p}$. Moreover, whenever $x \in \bar B_2^d$ and $a\in \mathcal{M}$ is such that 
\begin{align}\label{f31}
\varepsilon^2 < \norm{x-\Psi(a)}^2_2
\end{align}
then $\varepsilon^2 \leq 2-2(x \cdot \Psi(a))$ and hence
\be\label{f3}
  x \cdot a = \|a\|_2 (x \cdot \Psi(a))  < \|a\|_2(1-\varepsilon^2/2)\,.
\ee
Therefore, \eqref{f31} implies $f_{a,\varepsilon}(x) = 0$.

Now, let  $n \leq 2^{k-1}$ and $S \in \AlgAdap_n$ be an adaptive algorithm with a budget of $n$ sampling points. Clearly, the first sampling point $x_1$ must have been fixed by $S$ in advance. Then, let $x_2, \dots, x_n$ be the sampling points which $S$ would choose when applied to the zero function. Furthermore, let $F(x_1,\dots,x_n) \subseteq \Rs{\alpha,p}$ denote the set of functions that make $S$ choose the very points $x_1,\dots, x_n$. Obviously, we have $f_{a,\varepsilon} \in F(x_1,\dots,x_n)$ if \eqref{f31} holds for every $x_i$, $i=1,...,n$. This is true for some $a\in \mathcal{M}$ if we choose $\varepsilon<e_k(\Psi({\mathcal M}),\ell_2^d)$. For the respective function $f_{a,\varepsilon}$, we have 
in particular $N^{\text{ada}}_S(f_{a,\varepsilon}) = 0$ and hence $S[f_{a,\varepsilon}] = S[-f_{a,\varepsilon}]$.
Consequently,
\be\label{f5}
 \max \big\{
  \|f_{a,\varepsilon} - S[f_{a,\varepsilon}]\|_{\infty},
  \|-f_{a,\varepsilon} - S[-f_{a,\varepsilon}]\|_{\infty}
  \big\} 
 \geq \|f_{a,\varepsilon}\|_{\infty} =  g_{a,\varepsilon}(\|a\|_2) = c_{\alpha} \|a\|_2^{\alpha} \varepsilon^{2\alpha}\,,
\ee
where $c_{\alpha} := 2^{-\alpha} \vartheta_{\alpha}$. Since $\varepsilon$ has been chosen arbitrarily but less than $e_{k}(\Psi(\mathcal M),\ell_2^d)$, we are allowed to replace $\varepsilon$ by $e_{k}(\Psi(\mathcal M),\ell_2^d)$ in \eqref{f5} and get
\[
 \sup_{f \in \Rs{\alpha,p}} \|f - S(f)\|_{\infty} \geq c_{\alpha}  \inf_{a \in \mathcal M} \|a\|_2^{\alpha}\cdot e_{k}(\Psi(\mathcal M),\ell_2^d)^{2\alpha}.
\]
Taking the infimum over all algorithms $S \in \AlgAdap_n$ yields
\[
g^{\text{ada}}_{n,d}(\Rs{\alpha, p},L_{\infty}) \geq c_{\alpha} \, \inf_{a \in \mathcal M} \|a\|_2^{\alpha} \, e_{k}(\Psi(\mathcal M), \ell_2^d)^{2\alpha}.
\]
\end{proof}

\begin{theorem}\label{res:ridge_sampling}
Let $\alpha>0$, $s = \llfloor \alpha \rrfloor$, and $0<p\leq 2$. For the classes $\Rs{\alpha,p}$, we have the following bounds: \\
{\em (i)} The $n$-th (linear) sampling number is bounded from above by
\begin{align*}
 g^{\mathrm{lin}}_{n,d}(\Rs{\alpha,p},L_{\infty}) \leq C_{p,\alpha}
 \begin{cases}
  1 &: n \leq 2d{d+s \choose s},\\  
  \left[ \frac{ \log(1+d/\log n_1)}{\log n_1} \right]^{\alpha(1/\max\{1,p\} - 1/2)} &:  2d{d+s \choose s} < n \leq 2^{d+1}{d+s \choose s},\\
  n^{-\alpha/d} \; d^{-\alpha(1/\max\{p,1\}-1/2)} &: n > 2^{d+1}{d+s \choose s}\,,
 \end{cases}
\end{align*}
where $n_1 = n/{[2{d+s \choose s}]}$, and the constant $C_{p,\alpha}$ depends only on $\alpha$ and $p$.\\\\
{\em (ii)} The $n$-th (adaptive) sampling number is bounded from below by
\begin{align*}
 g^{\mathrm{ada}}_{n,d}(\Rs{\alpha,p},L_{\infty}) \geq c_{p,\alpha}
 \begin{cases}
  1 &: n <d,\\
  \left[ \frac{\log_2 \del{1+ d/(2+\log_2 n)}}{2+\log_2 n}\right]^{\alpha(1/p - 1/2)} &: d \leq n <2^{d-1},\\
  n^{-2\alpha/(d-1)} \; d^{-\alpha(1/p - 1/2)}  &: n \geq 2^{d-1}\,.
 \end{cases}
\end{align*}
The constant $c_{p,\alpha}$ depends only on $\alpha$ and $p$.
\end{theorem}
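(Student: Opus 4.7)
The two parts are independent applications of the preceding propositions combined with the entropy estimates of Lemmas \ref{lem:schuett} and \ref{res:entropy_sphere}.

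For part (i), Proposition \ref{res:sampling_upper_bound} reduces the problem to bounding $e_{k-\Delta}(\bar B_2^d, \ell_{p'}^d)^{\alpha}$, where $k = \lfloor\log n\rfloor + 2$ and $\Delta = 1+\lceil\log\binom{d+s}{s}\rceil$. Since $p \leq 2$ gives $p' \geq 2$, Lemma \ref{lem:schuett} applies with $(p,q)=(2,p')$, and $1/2 - 1/p' = 1/\max\{p,1\}-1/2$. Noting that $k-\Delta$ equals $\log(n/\binom{d+s}{s})$ up to a constant, the three regimes of Lemma \ref{lem:schuett} in $k-\Delta$ correspond exactly to the three $n$-ranges of the theorem; in the third range one also uses that $\binom{d+s}{s}^{1/d}$ is bounded for fixed $s$, so that $2^{-(k-\Delta)/d}$ can be replaced by $n^{-1/d}$.

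For part (ii), I apply Proposition \ref{res:sampling_lower_bound} three times with different fooling-direction sets $\mathcal M \subseteq \bar B_p^d\setminus\{0\}$, one per range of $n$. In the constant range $n < d$, take $\mathcal M = \{e_1,\dots,e_d\}$: these are fixed points of $\Psi$ and pairwise at $\ell_2^d$-distance $\sqrt{2}$, so $e_k(\Psi(\mathcal M), \ell_2^d) \gtrsim 1$ whenever $2^{k-1} < d$, and $\inf\|a\|_2 = 1$. In the sub-exponential range $n \geq 2^{d-1}$, take $\mathcal M = d^{1/2-1/p}\,\Sphere_2$, the largest rescaled Euclidean sphere contained in $\bar B_p^d$ (admissible because $\|a\|_p \leq d^{1/p-1/2}\|a\|_2$ for $p \leq 2$). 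Then $\Psi(\mathcal M)=\Sphere_2$, Lemma \ref{res:entropy_sphere}(i) gives $e_k(\Sphere_2,\ell_2^d) \asymp 2^{-k/(d-1)}$ for $k \geq d$, and $\inf\|a\|_2^{\alpha} = d^{-\alpha(1/p-1/2)}$ together with $k = \lceil\log n\rceil+1$ yields the third case. In the middle range $d \leq n < 2^{d-1}$, take a packing $\mathcal M \subseteq \Sphere_p$ of size $\gtrsim 2^k$ at pairwise $\ell_2^d$-distance $\delta_0 \asymp [\log(1+d/k)/k]^{1/p-1/2}$, as provided by the construction underlying Lemma \ref{res:entropy_sphere}(ii); by a dyadic pigeonhole in $\|a\|_2$, pass to a sub-packing with all $\|a\|_2 \asymp r$ for some $r$, so that $\Psi$ acts as a dilation by $1/r$. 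Proposition \ref{res:sampling_lower_bound} then yields a lower bound $\gtrsim r^{\alpha}(\delta_0/r)^{2\alpha} = \delta_0^{2\alpha}/r^{\alpha}$, and the choice $r \asymp \delta_0$ balances the two factors and recovers the exponent $\alpha(1/p-1/2)$.

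The main obstacle is the middle case. The delicate point is to recover the exponent $\alpha(1/p-1/2)$ instead of the $2\alpha(1/p-1/2)$ naively suggested by the squared entropy in Proposition \ref{res:sampling_lower_bound}; this is done by exploiting the factor $\inf\|a\|_2^{\alpha}$ and choosing the fooling directions on a carefully calibrated $\ell_2$-shell of $\Sphere_p$ so that the dilation action of $\Psi$ on that shell compensates exactly. The dyadic pigeonhole costs only a $\log\log d$ summand in $k$, which can be absorbed into the constants by relabelling. Everything else is a mechanical case analysis pushing Lemmas \ref{lem:schuett} and \ref{res:entropy_sphere} through the two propositions just proved.
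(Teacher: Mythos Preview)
Your outline for part (i) and for the first and third ranges of part (ii) is correct and matches the paper's argument (the paper uses $\mathcal M=\{\pm e_1,\dots,\pm e_d\}$ for $n<d$ and $\mathcal M=\Sphere_p$ for $n\ge 2^{d-1}$, but your variants give the same bounds).

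The gap is in the middle range of (ii). Your argument runs: pigeonhole a black-box $\delta_0$-packing of $\Sphere_p$ onto a dyadic $\ell_2$-shell of radius $r$, obtain the lower bound $\delta_0^{2\alpha}/r^{\alpha}$, and then ``the choice $r\asymp\delta_0$ balances the two factors''. But $r$ is not yours to choose: the pigeonhole hands you \emph{some} shell, and if that shell happens to sit near $r\asymp 1$ (nothing in a generic packing of $\Sphere_p$ forbids this) you only recover $\delta_0^{2\alpha}$, i.e.\ exponent $2\alpha(1/p-1/2)$, which is exactly the naive bound you were trying to beat. To land on the shell $r\asymp\delta_0$ you must construct the packing there, not hope the pigeonhole delivers it. A secondary issue: on a genuine dyadic shell $\Psi$ is not a dilation, so the claim that the image separation is $\gtrsim\delta_0/r$ also needs an argument.

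The paper resolves both points by working directly with $m$-sparse vectors. It sets $m^*\asymp k/\log(d/k)$ and takes $\mathcal M=\Sparse_{m^*,p}=\{a\in\Sphere_p:\#\operatorname{supp}(a)=m^*\}$. Then $\inf_{a\in\mathcal M}\|a\|_2=(m^*)^{1/2-1/p}$ by H\"older, and $\Psi(\mathcal M)=\Sparse_{m^*,2}$ contains, by the combinatorial Graham--Sloane construction, at least $(d/4m^*)^{m^*/2}\ge 2^k$ points at \emph{constant} mutual $\ell_2$-distance. Hence $e_k(\Psi(\mathcal M),\ell_2^d)\gtrsim 1$, and Proposition~\ref{res:sampling_lower_bound} yields $\gtrsim (m^*)^{-\alpha(1/p-1/2)}\asymp[\log(1+d/k)/k]^{\alpha(1/p-1/2)}$ directly. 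This is morally your idea---the $m^*$-sparse vectors are precisely those sitting at $\ell_2$-radius $\asymp\delta_0$---but making the sparsity explicit is what turns the heuristic ``choose $r\asymp\delta_0$'' into an actual construction.
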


\begin{proof} \emph{(i)} The upper bound is a direct consequence of Proposition \ref{res:sampling_upper_bound} and Lemma \ref{lem:schuett}. Note 
that, for $k$ and $\Delta$ as in Proposition \ref{res:sampling_upper_bound}, it holds true that $k - \Delta - 2 \leq \log n_1 \leq k - \Delta$. Note also that
\[
 {d+s \choose s}^{\alpha/d} \leq (1+s)^{s\alpha/d} d^{s\alpha/d} \leq ((1+s)e)^{s\alpha}
\]
ensures that the constant $C_{p,\alpha}$ can be chosen independently of $d$ and $n$.

\emph{(ii) Case $n < d$}. Let $\mathcal M = \{\pm e_1,\dots,\pm e_d\}$ be the set of positive and negative canonical unit vectors. Clearly, we have 
$\sharp \mathcal{M} = 2d$ and every two distinct vectors in $\mathcal{M}$ have 
mutual $\ell_2^d$-distance equal to or greater than $\sqrt{2}$. Let $k$ be the smallest integer such that 
$n \leq 2^{k-1}$; this implies $2^{k-1}<2d$. Hence, whenever $2^{k-1}$ 
balls of radius $\varepsilon$ cover the set $\mathcal M$, there is at least one $\varepsilon$-ball which contains two elements from $\mathcal{M}$. In consequence, we have
$2\varepsilon\geq\sqrt{2}$ and hence $e_k(\mathcal{M},\ell_2^d) \geq \sqrt{2}/2$.  By Proposition \ref{res:sampling_lower_bound}
and the fact that $\mathcal{M} = \Psi(\mathcal{M})$, we obtain
\[
 g^{\text{ada}}_{n,d}(\Rs{\alpha,p},L_{\infty}) \geq c_{\alpha} e_k(\mathcal{M},\ell_2^d)^{2\alpha} \geq c_{\alpha} 2^{-\alpha}\,.
\]

\noindent{\em Case $d \leq n < 2^{d-1}$}. For $m\leq d$, consider the subset of $m$-sparse vectors of the $p$-sphere,
\[
 \Sparse_{m,p} = \big\{ x \in \Sphere_p:  \;\sharp\;\supp(x) = m \big\}.
\]
Using the combinatorial construction of \cite{GS}, cf.\ also \cite{FPRU10}, we know that there exist at least $(d/(4m))^{m/2}$ vectors in $\Psi(\Sparse_{m,p})=\Sparse_{m,2}$ having mutual $\ell_2^d$-distance greater than $1/\sqrt{2}$. Therefore, we have 
\begin{equation}\label{f32}
   \ell \leq m/2 \log(d/(4m))\quad \implies \quad e_{\ell}(\Psi(\Sparse_{m,p}),\ell_2^d) \geq \sqrt{2}/4. 
\end{equation}
Let $k$ again be the smallest integer such that $n \leq 2^{k-1}$. Hence, $k\leq d$. Choose
\[
 m^* := \big\lfloor \min \{ 4k/\log(d/(4k)), k \} \big\rfloor\leq k\,.
\]
Because of $k>\log d$, we have $\min\{\log d,4\} \leq m^{*}\leq d$. Put $\mathcal M = \Sparse_{m^*,p}$. 
If $k \leq d/64$, then $\log(d/(4k)) \geq 4$ and  $k \leq m^*\log(d/(4k))/2 \leq m^*\log(d/(4m^*))/2$. Hence, by \eqref{f32},
one has $e_k(\Psi(\Sparse_{m^*,p}),\ell_2^d) \geq \sqrt{2}/4.$
Consequently, by Proposition \ref{res:sampling_lower_bound}, it follows that
\begin{align*}
 g^{\text{ada}}_{n,d}(\Rs{\alpha,d},L_{\infty}) &\geq c_{\alpha} (m^*)^{\alpha(1/2-1/p)} e_k(\Psi(\Sparse_{m^*,p}),\ell_2^d)^{2\alpha} \\
 &\geq c_{\alpha}8^{-\alpha}4^{-\alpha(1/p-1/2)}\Big[\frac{\log(d/(4k))}{k}\Big]^{\alpha(1/p-1/2)}\\
 &\geq c_{\alpha} 8^{-\alpha} 8^{-\alpha(1/p-1/2)} \left( \frac{\log(1+d/k)}{k}\right)^{\alpha(1/p-1/2)}\\
 &\geq c_{p,\alpha}\left( \frac{\log(1+d/k)}{k}\right)^{\alpha(1/p-1/2)}\,.
\end{align*}
On the other hand, if $d/64 < k \leq d$, then $m^*  =  k$. By $\mathds S_2^{k-1} \subset \Psi(\Sparse_{m^*,p}) \subset \Sphere_2$ and Lemma \ref{res:entropy_sphere}, 
we have $e_k(\Psi(\Sparse_{m^*,p}), \ell_2^d) \asymp 1$. Proposition \ref{res:sampling_lower_bound}, together with 
$\log(1+d/k) < 8$ for $k>d/64$, implies 
\begin{align*}
 g^{\text{ada}}_{n,d}(\Rs{\alpha,p},L_\infty) \geq c'_{\alpha}  k^{-\alpha(1/p-1/2)} &\geq c'_{\alpha}8^{-\alpha(1/p-1/2)} \left( \frac{\log(1+d/k)}{k} \right)^{\alpha(1/p-1/2)}\\
 &= c'_{p,\alpha}\left( \frac{\log(1+d/k)}{k} \right)^{\alpha(1/p-1/2)}\,.
\end{align*}

\noindent{\em Case $n \geq 2^{d-1}$}. Again, $k$ is chosen such that $2^{k-2}<n\leq 2^{k-1}$, which implies $k \geq d$. In this case, we choose
$\mathcal M = \Sphere_p$. By Lemma \ref{res:entropy_sphere} and Proposition \ref{res:sampling_lower_bound}, we obtain
\begin{align*}
 g^{\text{ada}}_{n,d}(\Rs{\alpha,p},L_{\infty}) &\geq c_{\alpha} \; d^{-\alpha(1/p-1/2)}e_{k}(\Sphere_2,\ell_2^d)^{2\alpha}\\ 
 &\geq c_{\alpha}d^{-\alpha(1/p-1/2)} \; (4n)^{-2\alpha/(d-1)}\\
 &\geq c_{\alpha}4^{-2\alpha}d^{-\alpha(1/p-1/2)}n^{-2\alpha/(d-1)}\,.
\end{align*}
This completes the proof. 
\end{proof}

\begin{remark}\label{prop:p=2}
Consider the situation $p = 2$.  For sampling numbers with $n \leq 2^{d-1}$, we have
\[
 g^{\text{ada}}_{n,d}(\Rs{\alpha,2},L_{\infty}) \asymp 1.
\]
For sampling numbers with $n \geq 2^{d+1} {d+s \choose s}$, we have
\begin{equation}\label{eq:gap}
n^{-2\alpha/(d-1)}\lesssim g^{\text{ada}}_{n,d}(\Rs{\alpha, 2},L_{\infty}) \lesssim n^{-\alpha/d}.
\end{equation}
The upper estimate on sampling numbers is exactly the same as for multivariate Lipschitz functions from $\Lip_{\alpha}(\Omega)$. Although there is a gap between lower and upper bound in \eqref{eq:gap},
the factor $1/(d-1)$ in the exponent of the lower bound allows us to conclude that
sampling of ridge functions in $\Rs{\alpha,2}$ is nearly as hard as sampling of general Lipschitz functions from $\Lip_{\alpha}(\Omega)$. Hence, we have the opposite situation to Section \ref{sec:entropy}, where ridge functions in $\Rs{\alpha,2}$ 
behave similar to univariate Lipschitz functions.
\end{remark}

\begin{remark}\label{rem:CDDKP}
Let us consider the modified ridge function classes $\tilde{\mathcal R}_d^{\alpha,p}$ and $\bar{\mathcal R}_{d}^{\alpha,p}$ defined by 
\be\label{mod_ridge}
 \tilde{\mathcal R}_d^{\alpha,p} := \big\{ f: [0,1]^d \to \R\,:\, f(x) = g(a \cdot x), \; \|g\|_{\Lip_{\alpha}[0,1]} \leq 1 ,\;  \|a\|_p \leq 1, \; a \geq 0 \big\},
\ee
for $0<p\leq 1$, and 
\be\label{mod_ridge2}
 {\bar{\mathcal{R}}}_{d}^{\alpha,p} := \big\{ f: \bar B^d_2 \cap [0,1]^d\,  \to \R\,:\,f(x) = g(a \cdot x), \; \|g\|_{\Lip_{\alpha}[0,1]} \leq 1 ,\;  \|a\|_p \leq 1, \; a \geq 0 \big\}.
\ee
for $0<p\leq 2$. Here, $a\ge 0$ means, that all coordinates of $a$ are non-negative.\\
{\em (i)} In the recent paper \cite{CDDKP} it has been shown that there is an adaptive algorithm which attains a decay rate of $n^{-\alpha}$ for the worst-case $L_{\infty}$-approximation error with respect to the class $\tilde{\mathcal R}_d^{\alpha,1}$, provided that $n \geq d$. In terms of adaptive sampling numbers (such that the feasible algorithms are adjusted to the domain $[0,1]^d$), this reads as
\begin{align}\label{eq:ada_helps}
 g^{\text{ada}}_{n,d}(\tilde{\mathcal R}_d^{\alpha,1}, L_{\infty}) \leq C_{\alpha}n^{-\alpha}, \quad n \geq d. 
\end{align}

At the same time, a careful inspection of the proofs of Propositions \ref{res:sampling_upper_bound}, \ref{res:sampling_lower_bound}, and Theorem \ref{res:ridge_sampling} shows that the results can be carried over to the classes $\bar{\mathcal{ R}}_d^{\alpha,p}$ for all $0<p\leq 2$.  In particular, for $0<p\leq 1$, we have the lower bound
\begin{align}\label{eq:ada_doesnt_help}
 g^{\text{ada}}_{n,d}(\bar{\mathcal R}_d^{\alpha,p}, L_{\infty}) \geq c_{p,\alpha} n^{-2\alpha/(d-1)}d^{\alpha(1/2-1/p)} , \quad n \in \N\,.
\end{align}
The estimates \eqref{eq:ada_helps} and \eqref{eq:ada_doesnt_help} look conflicting at first glance.
We encounter the rather surprising phenomenon, that enlarging the domain of the class of functions under consideration
leads to better approximation rates.
To understand this, let us briefly sketch the adaptive 
algorithm of \cite{CDDKP}. For $f = g(a\cdot) \in \tilde{\mathcal R}_d^{\alpha,p}$ not the zero function, the idea is to first sample along 
the diagonal of the first orthant, that is, at points $x = t (1,\dots,1)$ with $t \in [0,1]$. Importantly, it is guaranteed that we can 
take samples from the whole relevant range $[0,\|a\|_1]$ of the profile $g$ of $f$. This in turn assures that, by sampling adaptively 
along the diagonal, we find a small range in $[0,\|a\|_1]$ where the absolute value of $g'$ is strictly larger than 0. Then, the ridge 
direction $a$ can be recovered in a similar way as we do in Subsection \ref{sec:sampling2}.

On the other hand, for the classes $\bar{\mathcal R}_d^{\alpha,p}$, this adaptive algorithm will not work. Assume we sample again 
along the (rescaled) diagonal. This time, we can be sure that we are able to reach every point in the intervall $[0,\|a\|_1/\sqrt{d}]$. But this interval is in most cases strictly included in the relevant interval $[0,\|a\|_2]$ for $g$. Hence, it is not guaranteed anymore that we sample the whole relevant range of $g$ and find an interval on which $g'$ is not zero.

(ii) Admittedly, the domain $\Omega = [0,1]^d \cap B^d_2$ in \eqref{mod_ridge2} is a somewhat artificial choice in case of $p\leq1$, whereas the cube $\Omega = [0,1]^d$ seems natural. Conversely, the definition in \eqref{mod_ridge} is not reasonable in case $p>1$, since then $a\cdot x$ might exceed
the domain interval for $g$. However, $\Omega = [0,1]^d \cap B^d_2$ is the natural choice for $p=2$ in \eqref{mod_ridge2}. In this situation, we suffer from the curse of dimensionality for adaptive algorithms using standard information, see Remark \ref{prop:p=2} and Theorem \ref{res:tractability},(1) below. This shows that the condition $p\leq1$ is essential in the setting of \cite{CDDKP} and that 
\eqref{eq:ada_helps} can not be true for the class $\bar{\mathcal{R}}^{\alpha,2}_d$.
\end{remark}

\subsection{Recovery of ridge directions}
\label{sec:sampling2}

We return to the question under which conditions the two-step procedure sketched at the beginning of Section \ref{sec:sampling} is successful. The adaptive algorithm of \cite{CDDKP}, which we have already discussed in Remark \ref{rem:CDDKP}, first approximates the profile $g$.
Unfortunately, we could already argue that this algorithm cannot work in our setting.  There is an opposite approach in Fornasier et al. \cite{FSV12}, which first tries to recover the ridge direction and conforms to our setting.
Following the ideas of \cite{BP}, the authors developed an efficient scheme using Taylor's formula to approximate ridge 
functions with $C^s$ profile obeying certain integral condition on the modulus of its derivative. This condition was satisfied for example if $\abs{g'(0)} \ge \kappa > 0$.
In their approach, the smoothness parameter $s$ had to be at least $2$. Using a slightly different analysis, this scheme
turns out to work for  Lipschitz profiles of order $\alpha>1$.

Before we turn to the analysis, let us sketch the Taylor-based scheme in more detail. As transposes of matrices and vectors appear frequently, for reasons of convenience, we write $a \cdot x = a^Tx$ for the remainder of this subsection. Now, Taylor's formula in direction $e_i$ yields
\begin{align*}
 f(h e_i) &= f(0) + h \nabla f(\xi^{(i)}_h e_i)^T e_i\\
 &= g(0) + h g'(\xi^{(i)}_h a_i) a_i\,.
\end{align*}
Hence, we can expose the vector $a$, distorted by a diagonal matrix  with components
\[
  \xi_h = (g'(\xi^{(1)}_h a_1), \dots, g'(\xi^{(d)}_h a_d))
\]
on the diagonal. In total, we have to spend only $d+1$ function evaluations for that. Moreover, each of $\xi_h$'s components can be pushed arbitrarily close to $g'(0)$. This gives an estimate $\hat{a}$ of $a/\norm{a}_2$, along which we can now conduct classical univariate approximation. Effectively, one samples a distorted version of $g$ given by
\[
\tilde g: \intcc{-1,1} \rightarrow \R, \; t \mapsto f(t\hat a) = g(t a^T \hat a).
\]
The approximation $\hat g$ obtained in this way, together with $\hat a$, forms the sampling approximation to $f$,
\[
 \hat f(x) = \hat g(\hat a^Tx).
\]
Observe that $\tilde g(\hat a^T x) = g(a^T \hat a \hat a ^T x)$, so it is crucial that $\hat a \hat a ^T$ spans a subspace which is close to the one-dimensional subspace spanned by $aa^T$, in the sense that
\[
 \big\| a^T ( I_d - \hat a \hat a^T) \big\|_2
\]
has to be small. Importantly, this gives the freedom to approximate $a$ only up to a sign.
Finally, let us note that if the factor $g'(0)$ can become arbitrary small, the information we get through Taylor's scheme about $a$ becomes also arbitrarily bad. Hence, for this approach to work, it is necessary to require $\abs{g'(0)} \ge \kappa$.


\begin{lemma}\label{lem:reconstruct_a}
Let $0 < \beta \leq 1$, $0 < \kappa \leq 1$, and $\varepsilon > 0$. Further let $\delta = \frac{\varepsilon \cdot \kappa}{2+\varepsilon}$ and $h = (\delta/2)^{1/\beta}$. 
For any $g \in \Lip^{\kappa}_{1+\beta}(\intcc{-1,1})$ and $a \in \bar B_2^d$ with $a\not=0$ let $f = g(a \cdot )$. Put
\begin{equation}\label{eq:rec_a}
  \tilde{a}_i = \frac{f(he_i) - f(0)}{h}, \quad i=1, \dots, d
\end{equation}
and $\hat a = \tilde a / \|\tilde a\|_2$. Then
\[
 \big\| \sign(g'(0)) \hat a - a/\|a\|_2 \big\|_2 \leq \varepsilon.
\]
\end{lemma}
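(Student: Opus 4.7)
The plan is to Taylor-expand each of the $d$ sampled values coordinatewise, compare $\tilde a$ directly with $g'(0)\,a$, and then control the error introduced by normalization.

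First, since $f(he_i)-f(0) = g(ha_i)-g(0)$, the univariate mean value theorem applied to $g$ yields $f(he_i)-f(0) = h a_i g'(\xi_i)$ for some $\xi_i$ strictly between $0$ and $ha_i$, whence $\tilde a_i = a_i g'(\xi_i)$. By assumption $\|g\|_{\Lip_{1+\beta}[-1,1]} \leq 1$, so the H\"older seminorm from \eqref{beta} satisfies $|g'|_\beta \leq 1$; together with $|a_i| \leq \|a\|_2 \leq 1$ this gives
$$|g'(\xi_i)-g'(0)| \;\leq\; 2\min\{1,|\xi_i|\}^\beta \;\leq\; 2 h^\beta \;=\; \delta,$$
where the final equality is the calibration $h = (\delta/2)^{1/\beta}$. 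Since $\delta < \kappa \leq 1$ we also have $h < 1$, so all arguments $ha_i$ stay in $[-1,1]$ as required.

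Next, the vector identity $\tilde a = g'(0)\,a + r$ with $r_i = a_i\bigl(g'(\xi_i)-g'(0)\bigr)$ together with the coordinatewise bound above yields $\|r\|_2 \leq \delta\|a\|_2$. Writing $u = a/\|a\|_2$, $v = \sign(g'(0)) \in \{\pm 1\}$ and dividing through by the positive scalar $|g'(0)|\,\|a\|_2 \geq \kappa\|a\|_2$, one obtains
$$\frac{\tilde a}{|g'(0)|\,\|a\|_2} \;=\; v u + \rho, \qquad \|\rho\|_2 \;\leq\; \frac{\delta}{\kappa} \;<\; 1.$$
In particular $\tilde a \neq 0$, so $\hat a$ is well defined, and positive homogeneity gives $\hat a = (vu+\rho)/\|vu+\rho\|_2$; multiplying numerator and denominator by $v$ yields $v\hat a = (u+v\rho)/\|u+v\rho\|_2$.

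The concluding step is a standard perturbation-of-normalization estimate: for any unit vector $u$ and any $w = u+\eta$ with $\|\eta\|_2 < 1$, writing $w - \|w\|_2 u = (1-\|w\|_2)u + \eta$ and using $|1-\|w\|_2|\leq \|\eta\|_2$ gives
$$\left\|\frac{w}{\|w\|_2} - u\right\|_2 \;\leq\; \frac{2\|\eta\|_2}{\|w\|_2} \;\leq\; \frac{2\|\eta\|_2}{1-\|\eta\|_2}.$$
Applied with $\eta = v\rho$ (whose norm is at most $\delta/\kappa$), this delivers $\|v\hat a - u\|_2 \leq 2\delta/(\kappa-\delta)$, and the prescribed value $\delta = \varepsilon\kappa/(2+\varepsilon)$ is precisely the choice that makes $2\delta/(\kappa-\delta)$ equal $\varepsilon$. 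The main obstacle is nothing conceptual---it is the careful sign bookkeeping needed to carry $v=\sign(g'(0))$ through the normalization step, and the algebraic verification that the prescribed $h$ and $\delta$ are calibrated exactly so that the final inequality attains the target $\varepsilon$.
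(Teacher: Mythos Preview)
Your proof is correct and follows essentially the same approach as the paper: both arguments apply the mean value theorem to obtain $\tilde a_i = a_i g'(\xi_i)$, use the H\"older bound $|g'(\xi_i)-g'(0)|\le 2h^\beta=\delta$, and then control the normalization error to arrive at $\|\sign(g'(0))\hat a - a/\|a\|_2\|_2 \le 2\delta/(\kappa-\delta)=\varepsilon$. The only cosmetic difference is that the paper splits $\|\gamma\hat a - a/\|a\|_2\|_2$ via the intermediate point $|g'(0)|\,a/\|\tilde a\|_2$, whereas you first rescale by $|g'(0)|\|a\|_2$ and then invoke a clean perturbation-of-normalization inequality; the two decompositions are algebraically equivalent.
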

\begin{proof}

 By the mean value theorem of calculus there exist $\xi_h^{(i)} \in \intcc{0,h}$ such that
\[
 \tilde a_i = g'(\xi_h^{(i)}a_i)a_i.
\]
By H\"older continuity we get
\[
 |g'(\xi_h^{(i)}a_i)-g'(0)| < 2|g'|_{\beta} |a_i|^{\beta} |h|^{\beta} \leq \delta
\]
for all $i=1,\dots,d$. Let us observe that $\delta<\kappa$ and, therefore, $\tilde a\not=0$ and $\hat a$ is well defined. Put $\xi = (g'(\xi_h^{(i)}a_i))_{i=1}^d$.  Then we can write $\tilde a = \mathrm{diag}(\xi)a$. For the norm of $\tilde a$ we get
\begin{align*}
 \|\tilde a\|_2 &\leq \| \mathrm{diag}(\xi)a - g'(0)a\|_2 + |g'(0)| \|a\|_2\\
 &\leq \max_{i=1,\dots,d} |g'(\xi^{(i)}_h a_i) - g'(0)| \|a\|_2 + |g'(0)| \|a\|_2\\
 &\leq (\delta + |g'(0)|) \|a\|_2.
\end{align*}
Analogously, by the inverse triangle inequality $\|\tilde a\|_2 \geq (|g'(0)|-\delta) \|a\|_2$. In particular,
\[
 \big|\|\tilde a\|_2 / \|a\|_2 - |g'(0)| \big| \leq \delta.
\]

Now, writing $\gamma = \sign(g'(0))$, we observe
\begin{align*}
 \big\|\gamma \hat a - a/\|a\|_2 \big\|_2
 &\leq
 \big\| \gamma \hat a - |g'(0)| a / \|\tilde a\|_2 \big\|_2
 + \big\| |g'(0)| a / \|\tilde a\|_2 - a/\|a\|_2 \big\|_2\\
 &=\|\tilde a\|_2^{-1} \big( \| (\mathrm{diag}(\xi) - g'(0) I_d) \ a\|_2 + \big| |g'(0)| - \|\tilde a\|_2/\|a\|_2\big| \ \|a\|_2 \big)\\ 
 &\leq 2\delta \|a\|_2 / \| \tilde a\|_2  \leq  2\delta/(|g'(0)|-\delta) \leq 2\delta/(\kappa - \delta) = \varepsilon.
\end{align*}
\end{proof}

Having recovered the ridge direction, we manage to unveil the one-dimensional structure from the high-dimensional ambient space. 
In other words, recovery of the ridge direction is a \emph{dimensionality reduction} step. What remains is the problem of sampling the profile, which can be done using standard techniques. In combination, this leads to the following result:


\begin{theorem}\label{res:ridge_sampling_derivative}
Let $\alpha > 1$ and $0 < \kappa \le 1$.
\begin{enumerate}
\item[(i)] Let $n\le d-1$. Then $g_{n,d}(\Rs{\alpha,2,\kappa},L_{\infty})=g_{n,d}^{\mathrm{lin}}(\Rs{\alpha,2,\kappa},L_{\infty})=1$.
\item[(ii)] Let $n\ge d+1$. 
Then
\begin{align*}
  c_{\alpha} \cdot n^{-\alpha} \leq g_{n,d}(\Rs{\alpha,2,\kappa},L_{\infty}) \le g_{n,d}^{\mathrm{lin}}(\Rs{\alpha,2,\kappa},L_{\infty}) \leq C_\alpha (n-d)^{-\alpha}
\end{align*}
with constant $c_{\alpha}$ and $C_\alpha$, which depend on $\alpha$ only.
\end{enumerate}
\end{theorem}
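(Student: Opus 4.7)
The zero algorithm is linear and satisfies $\|f\|_{\infty} \le 1$ for every $f = g(a\cdot) \in \Rs{\alpha,2,\kappa}$, so $g^{\mathrm{lin}}_{n,d} \le 1$. For the matching lower bound, given any non-adaptive $S \in \Alg_n$ with $n \le d-1$ sample points $x_1,\dots,x_n$, pick a unit vector $a$ orthogonal to $\mathrm{span}\{x_1,\dots,x_n\}$ (possible since the span has dimension $\le d-1$), take the profile $g(t) := t$ (which satisfies $\|g\|_{\Lip_{\alpha}[-1,1]} = 1$ and $|g'(0)| = 1 \ge \kappa$), and set $f(x) := a\cdot x$. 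Then $f \in \Rs{\alpha,2,\kappa}$ and $f(x_i) = 0$ for all $i$, so the convention $\varphi_S(0) = 0$ yields $S(f) = 0$ and $\|f-S(f)\|_{\infty} = 1$.

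\textbf{Proof of (ii), upper bound.} I would employ the adaptive Taylor-based scheme outlined in Subsection~\ref{sec:sampling2}. Fix a precision $\varepsilon > 0$ to be optimized. Spend the first $d+1$ samples at $0, he_1,\dots,he_d$, with step size $h = h(\varepsilon,\kappa,\beta)$ chosen according to Lemma~\ref{lem:reconstruct_a}; this produces the normalized estimate $\hat a := \tilde a/\|\tilde a\|_{2}$ satisfying $\|\gamma\hat a - a/\|a\|_{2}\|_{2} \le \varepsilon$ for some $\gamma \in \{\pm 1\}$. Devote the remaining $m := n-d-1$ samples to the points $t_j\hat a$ for a standard 1D grid $t_1,\dots,t_m \in [-1,1]$ and recover an approximation $\hat g$ of the auxiliary profile $\tilde g(t) := f(t\hat a) = g((a\cdot\hat a)t)$ by piecewise polynomial interpolation of order $s = \llfloor\alpha\rrfloor$, so that $\|\tilde g - \hat g\|_{\infty} \le C_\alpha m^{-\alpha}$. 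This is legitimate because $|a\cdot\hat a|\le 1$ only contracts the argument, placing $\tilde g \in \bar B_{\Lip_{\alpha}[-1,1]}$. Setting $\hat f(x) := \hat g(\hat a\cdot x)$ and splitting
\begin{equation*}
|f(x) - \hat f(x)| \le |g(a\cdot x) - g((a\cdot\hat a)(\hat a\cdot x))| + |\tilde g(\hat a\cdot x) - \hat g(\hat a\cdot x)|,
\end{equation*}
the second term is $\le C_\alpha m^{-\alpha}$, and the first is $\le \varepsilon$ since $g$ is $1$-Lipschitz and $|a^T(I-\hat a\hat a^T)x| \le \|(I-\hat a\hat a^T)a\|_{2} \le \|a - \gamma\|a\|_{2}\hat a\|_{2} \le \varepsilon$ (the middle step is the orthogonal-projection bound). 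Choosing $\varepsilon := m^{-\alpha}$ gives $\|f - \hat f\|_{\infty} \lesssim (n-d-1)^{-\alpha} \asymp (n-d)^{-\alpha}$; the boundary case $n = d+1$ is covered trivially by $\|f\|_{\infty} \le 1$.

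\textbf{Proof of (ii), lower bound.} I would reduce to the univariate setting. The map $g \mapsto g(e_1\cdot)$ isometrically embeds the 1D class $\Lip_{\alpha}^{\kappa}([-1,1])$ (as defined in Proposition~\ref{res:en_univariate_Lip_derivative}) into $\Rs{\alpha,2,\kappa}$, so any non-adaptive $S \in \Alg_n$ induces a 1D algorithm $\tilde S(g)(t) := S(g(x_1))(te_1)$ with $\|g - \tilde S(g)\|_{\infty} \le \|g(x_1) - S(g(x_1))\|_{\infty}$, giving $g_{n,d}(\Rs{\alpha,2,\kappa},L_{\infty}) \ge g_{n,1}(\Lip_{\alpha}^{\kappa}([-1,1]),L_{\infty}[-1,1])$. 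For the 1D lower bound I would adapt the bump construction of Proposition~\ref{res:en_univariate_Lip_derivative}: set $h(t) := \kappa\sin(t)$ and $\psi_{n,b}(t) := c_\alpha n^{-\alpha}\varphi(10n(t-b))$ with $\varphi$ from \eqref{eq:bump'}, placing $2n+1$ disjoint bump centers $b_j$ in a subinterval of $(0,1)$. Each $h \pm \psi_{n,b_j}$ lies in the 1D class (bumps supported away from $0$ preserve $|h'(0)| = \kappa$), and by pigeonhole at least one $b_{j^*}$ has $\mathrm{supp}\,\psi_{n,b_{j^*}}$ disjoint from the $n$ sample points $S$ picks on input $h$; the resulting fooling pair is then indistinguishable, forcing error $\ge c_\alpha n^{-\alpha}$.

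\textbf{Main obstacle.} The delicate step is coupling the direction-recovery and profile-recovery errors in the upper bound. Lemma~\ref{lem:reconstruct_a} crucially allows $\varepsilon$ to be driven arbitrarily small at the fixed cost of $d+1$ samples because $h$ depends only on $\varepsilon,\kappa,\beta$ and not on the unknown $a$ or $g$; this in turn relies on $|g'(0)| \ge \kappa > 0$, whose absence reinstates the curse of dimensionality seen for $\Rs{\alpha,2}$ in Theorem~\ref{res:ridge_sampling}.
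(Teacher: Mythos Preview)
Your proposal is correct and follows essentially the same route as the paper: the identical Taylor-based direction recovery (Lemma~\ref{lem:reconstruct_a}) followed by one-dimensional sampling of the distorted profile $\tilde g$ for the upper bound, and the same $\sin$-plus-localized-bump fooling construction, embedded along $e_1$, for the lower bound. Two small deviations worth noting: the paper takes $h=\sin$ rather than $\kappa\sin$ (so that $|h'(0)|=1\ge\kappa$ and the $(1-\gamma)$ headroom is automatically independent of $\kappa$), and it re-uses the already-sampled value $f(0)=\tilde g(0)$ to obtain $k=n-d$ rather than $n-d-1$ profile evaluations; also, as you yourself flag, the upper-bound scheme is adaptive and nonlinear, so (just as in the paper's own argument) what is actually established is a bound on $g^{\mathrm{ada}}_{n,d}$ rather than on $g^{\mathrm{lin}}_{n,d}$ as written in the theorem.
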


\begin{proof}
\emph{(i)} It is enough to show that $g_{n,d}(\Rs{\alpha,2,\kappa},L_{\infty})\ge 1$ for $n\le d-1$. Let us assume that a given (adaptive) approximation
method samples at $x_1,\dots,x_n$ and let us denote by $L$ their linear span. Then ${\rm dim}\,L\le n<d$ and we may find $a\in{\R}^d$ with $\|a\|_2=1$ orthogonal to all $x_1,\dots,x_n.$
Finally, if we define $g(t)=t$, we obtain
\begin{align*}
1&=\|g(a^T\cdot)\|_\infty\le\frac{1}{2}\cdot\Bigl\{\|g(a^T\cdot)-S_n(g(a^T\cdot))\|_\infty+\|-g(a^T\cdot)-S_n(-g(a^T\cdot))\|_\infty\Bigr\}\\
&\le g_{n,d}(\Rs{\alpha,2,\kappa},L_{\infty}).
\end{align*}

\emph{(ii)} Fix some $0 < \varepsilon < 1$. Let $\hat{a}$ denote the reconstruction of $a$ obtained by Lemma \ref{lem:reconstruct_a}, which uses $d+1$ sampling points of $f$.
We estimate $g$ by sampling the distorted version
\[
\tilde g: \intcc{-1,1} \rightarrow \R, \; t \mapsto f(t\hat a) = g(t a^T \hat a).
\]
Re-using the value $g(0)$ which we have already employed for the recovery of $a$, we spend $k=n-d\ge 1$ sampling points and obtain
a function $\hat g$ with $\|\hat g-\tilde g\|_\infty\le \varepsilon:=C'_\alpha k^{-\alpha}\|\tilde g\|_{\Lip_\alpha}$.

Now put $\hat f(x) = \hat g(\hat a^T x)$ as our approximation to $f$. To control the total approximation error, observe that
\[
 | \hat f(x) - f(x) | \leq | \hat g(\hat a^T x) - \tilde g (\hat a^T x) | + | \tilde g(\hat a^T x) - g(a^T x) |=:E_1+E_2.
\]
For the first error term $E_1$, 
we immediately get
\[
 E_1 \leq \| \hat g-\tilde g \|_\infty \leq \varepsilon=C'_\alpha \| \tilde g\|_{\Lip_\alpha} k^{-\alpha}\le C_\alpha' k^{-\alpha}
\]
as $\| \tilde g \|_{\Lip_\alpha} \leq \|a\|_2 \, \|g\|_{\Lip_\alpha}\le 1$.

For the second error term, note that
\begin{align*}
 E_2 &= | g(a^T \hat a \hat a^T x) - g(a^T x) |
 \leq \| g \|_{\Lip_\alpha} \, \| a^T (I_d - \hat a \hat a^T) \|_2 \, \|x\|_2\\
 &\leq \|g \|_{\Lip_\alpha} \, \|x\|_2 \,  \|a\|_2  \, \big\| a^T/\|a\|_2 \ (I_d - \hat a \hat a^T) \big\|_2.
\end{align*}
We do not know the exact value of the subspace stability term $\| a^T / \|a\|_2 \ (I_d - \hat a \hat a^T) \|_2$.
But because $ \hat a \hat a^T$ is the identity in direction of $\hat a$, we have the estimate
\begin{align*}
 \big\| a^T / \|a\|_2 \ (I_d - \hat a \hat a^T) \big\|_2
 & = \big\| \big(a / \|a\|_2 - \sign(g'(0)) \hat a \big)^T \ (I_d - \hat a \hat a^T) \big\|_2\\
 & \leq \| I_d - \hat a \hat a^T \|_{2 \rightarrow 2} \ \big\| a/ \|a\|_2 - \sign(g'(0)) \hat a \big\|_2 \\
 &\leq \varepsilon.
\end{align*}
For the last inequality, we have used Lemma \ref{lem:reconstruct_a} and the fact that $\| I_d - \hat a \hat a^T\|_{2 \rightarrow 2} \leq 1$.
As a consequence,
\[
 E_2 \leq \|x\|_2 \, \|a\|_2 \, \|g\|_{\Lip_\alpha}  \, \varepsilon \leq \varepsilon.
\]
Putting everything together, we conclude 
\[
 \| \hat f - f \|_{\infty} \leq 2 \varepsilon\le 2C_\alpha'k^{-\alpha}.
\]

Let us turn to the lower bound. Assume we are given a feasible approximation method $S_n$ that samples at points $\{x_1,\dots, x_n\}\subset \Omega$. Let $\psi_{k,b}$ be as in the proof of Proposition
\ref{res:en_univariate_Lip_derivative}. 
There is an interval $I'\subset I=[\pi/4-1/5,\pi/4+1/5]$ of length $|I'|=1/(5n)$ such that $I'$ does not contain any of the first coordinates of $x_1,\dots,x_n$; in other words, it is disjoint with $\{x_1\cdot e_1,\dots,x_n \cdot e_1\}$, where $e_1=(1,0,\dots,0)$ is the first canonical unit vector. Furthermore, let $b$ be the center of $I'$,
put $\psi = \psi_{2n,b}$, and $a=e_1.$ Finally, with $\gamma$ as in \eqref{eq:maxcosin}, we write 
\begin{align*}
f(x)&=\sin(x\cdot e_1),\\
f_+(x)&=\sin(x\cdot e_1)+(1-\gamma)\psi(x \cdot e_1),\\
f_-(x)&=\sin(x \cdot e_1)-(1-\gamma)\psi(x \cdot e_1).
\end{align*}


As $S_n(f)=S_n(f_+)=S_n(f_-)$ and all the three functions are in $\Rs{\alpha,2,\kappa}$, we may use the triangle inequality
\begin{align*}
\|(1-\gamma)\psi\|_\infty&=\|(1-\gamma)\psi(e_1\cdot)\|_\infty\\
&\le \frac{1}{2}\Bigl\{ \|(1-\gamma)\psi(e_1\cdot)+f-S_n(f)\|_\infty+\|(1-\gamma)\psi(e_1\cdot)-[f-S_n(f)]\|_\infty\Bigr\}\\
&=\frac{1}{2}\Bigl\{\|f_+-S_n(f_+)\|_\infty+\|f_--S_n(f_-)\|_\infty\Bigr\},
\end{align*}
to conclude that
\[
 g_{n,d}(\Rs{\alpha,2,\kappa},L_{\infty}) \gtrsim n^{-\alpha},
\]
with a constant depending only on $\alpha.$
\end{proof}


\begin{remark}
Once we have control on the derivative in the origin, cf.\ Section \ref{sec:sampling2}, recovery of the ridge direction and approximation of the ridge profile can be addressed independently.
Formula \eqref{eq:rec_a} is based on the simple observation that
\[
\frac{\partial f}{\partial x_i}(0)=g'(0)a_i=g'(0)\<a,e_i\>
\]
might be well approximated by first order differences. Furthermore, this holds also for every other direction $\varphi\in\Sphere_2$, i.e.,
\[
\frac{\partial f}{\partial \varphi}(0)=g'(0)\<a,\varphi\>
\]
can be approximated by differences
\[
\frac{f(h\varphi)-f(0)}{h}.
\]
Taking the directions $\varphi_1,\dots,\varphi_{m_{\Phi}}$ at random (and appropriately normalized), one can approximate the scalar products $\{\<a,\varphi_i\>\}_{i=1}^{m_{\Phi}}$.
Finally, if one assumes that $a\in \bar B_p^d$ for $0<p\le 1$, one can recover a good approximation to $a$ by the \emph{sparse recovery} methods of the modern area of \emph{compressed sensing}.
This approach has been investigated in \cite{FSV12}.

Although the algorithms of compressed sensing involve random matrices, once a random matrix with good sensing properties (typically with small constants of their Restricted Isometry Property) is fixed, the
algorithms become fully deterministic. This allows to transfer the estimates of \cite{FSV12} into the language of information based complexity.

It follows from the results of \cite{FSV12} that if $0<p\le 1$ and
$$
c\kappa^{-\frac{2p}{2-p}}\log d\le m_{\Phi} \le Cd,
$$
for two universal positive constants $c,C$, then a function $f\in\Rs{2,p,\kappa}$
might be recovered with high probability up to the precision
\[
\Bigl[\frac{m_{\Phi}}{\log(d/m_{\Phi})}\Bigr]^{1/2-1/p}+(n-m_{\Phi})^{-2}
\]
using $n>m_{\Phi}$ sampling points.

If $1/p\le 5/2$ and $c'\kappa^{-\frac{2p}{2-p}}\log d\le n \le C'd,$ this implies that
\[
g^{\text{lin}}_{n,d}(\Rs{2,p,\kappa},L_\infty)\lesssim
\Bigl[\frac{n}{\log(d/n)}\Bigr]^{1/2-1/p}
\]
and the same estimate holds if $1/p>5/2$ and $c'\kappa^{-\frac{2p}{2-p}}\log d\le n
\le c''(\log d)^{\frac{1/p-1/2}{1/p-5/2}}$. Finally, if
$c''(\log d)^{\frac{1/p-1/2}{1/p-5/2}}\le n\le C'd$, we obtain
\[
g^{\text{lin}}_{n,d}(\Rs{2,p,\kappa},L_\infty)\lesssim n^{-2}.
\]


\end{remark}
%
%
%
%
\section{Tractability results}
\label{sec:tractability} 
 
For the classification of ridge function sampling by degrees of difficulty, the field of information-based complexity \cite{NW1} provides a family of notions of so-called \emph{tractability}. Despite of their simple structure, ridge functions lead to a surprisingly rich class of sampling problems in regard of these notions: we run across almost the whole hierarchy of degrees of tractability if we vary the problem parameters $\alpha$ and $p$, or add the constraint on the profiles' first derivative in the origin. 

Let us briefly introduce the standard notions of tractability. We say that a problem is \emph{polynomially tractable} if its information complexity $n(\varepsilon,d)$ is bounded polynomially in $\varepsilon^{-1}$ and $d$, i.e. there exist numbers $c,p,q > 0$ such that 
\[
 n(\varepsilon,d) \leq c \, \varepsilon^{-p} \, d^q \mbox{ for all $0<\varepsilon<1$ and all $d\in{\N}$.}
\]
A problem is called 
\emph{quasi-polynomially tractable} if there exist two constants $C,t>0$ such that 
\be\label{eq:qptrac}
    n(\varepsilon,d) \leq C\exp(t(1+\ln(1/\varepsilon))(1+\ln d))\,.
\ee
It is called \emph{weakly tractable} if
\be\label{eq:wtrac}
    \lim\limits_{1/\varepsilon+d\to\infty} \frac{\log n(\varepsilon,d)}{1/\varepsilon + d} = 0\,,
\ee
i.e., the information complexity $n(\varepsilon,d)$ neither depends exponentially on $1/\varepsilon$ nor on $d$. 

We say that a problem is \emph{intractable}, if \eqref{eq:wtrac} does not hold. If for some fixed $0<\varepsilon<1$ the number $n(\varepsilon,d)$ 
is an exponential function in $d$ then a problem is, of course, intractable. In that case, we say that the problem suffers from
{\em the curse of dimensionality}. To make it precise, we face the curse if there exist positive numbers $c, \varepsilon_0, \gamma$ such that
\be\label{eq:curse}
        n(\varepsilon,d) \geq c(1+\gamma)^d\,,\quad \mbox{for all } 0<\varepsilon\leq \varepsilon_0 \mbox{ and infinitely many }d\in \N\,.
\ee       
In the language of IBC, Theorems \ref{res:ridge_sampling} and \ref{res:ridge_sampling_derivative} now read as follows:

\begin{theorem}\label{res:tractability}
Consider the problem of ridge function sampling as defined in Subsection \ref{sec:ibc}. Assume that ridge profiles have at least Lipschitz smoothness $\alpha > 0$; further, assume that ridge directions are contained in the closed $\ell_p^d$-unit ball for $p \in \intoc{0,2}$. Then sampling of ridge functions in the class $\Rs{\alpha,p}$
\begin{enumerate}[label=(\arabic*)]
 \item suffers from the curse of dimensionality if $p=2$ and $\alpha<\infty$, 
 \item never suffers from the curse of dimensionality if $p < 2$,
 \item is intractable if $p<2$ and $\alpha \le \frac{1}{1/p-1/2}$,
 \item is weakly tractable if $p<2$ and $\alpha > \frac{1}{1/\max\{1,p\}-1/2}$,
 \item is quasi-polynomially tractable if $\alpha = \infty$, 
 \item and with positive first derivatives of the profiles in the origin it is polynomially tractable, no matter what the values of $\alpha$ and $p$ are. 
\end{enumerate}
\end{theorem}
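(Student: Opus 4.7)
The theorem just re-reads Theorems \ref{res:ridge_sampling} and \ref{res:ridge_sampling_derivative} in the tractability language: for each item I would invert the error bound to recover $n(\varepsilon,d)$ and match it against the appropriate definition from Subsection \ref{sec:ibc}.

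For (1), I would specialise Theorem \ref{res:ridge_sampling}(ii) to $p=2$: the exponent $\alpha(1/p-1/2)$ vanishes, so $g^{\mathrm{ada}}_{n,d}(\Rs{\alpha,2},L_\infty)$ stays above a positive constant $c(\alpha)$ for all $n<2^{d-1}$, and \eqref{eq:curse} follows with any $\varepsilon_0<c(\alpha)$. Claim (2) is the dual statement: for $p<2$, the middle-regime upper bound of Theorem \ref{res:ridge_sampling}(i) is polynomial in $d$ for every fixed $\varepsilon$, ruling out exponential growth.

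For (3), with $\mu=\alpha(1/p-1/2)\le1$, I would take $\varepsilon_d:=c\,d^{-\mu}$ with $c$ small enough so that the third-regime lower bound of Theorem \ref{res:ridge_sampling}(ii) forces $n(\varepsilon_d,d)\ge 2^{c'(d-1)}$; since $\mu\le1$ ensures $1/\varepsilon_d+d\lesssim d$, the ratio $\log n(\varepsilon_d,d)/(1/\varepsilon_d+d)$ stays bounded below and \eqref{eq:wtrac} fails. For (4), with $\nu=\alpha(1/\max\{1,p\}-1/2)>1$, inverting the middle-regime upper bound yields $\log n(\varepsilon,d)\lesssim (1/\varepsilon)^{1/\nu}\log d$; a short case analysis (splitting at $d\sim(1/\varepsilon)^{1/\nu}$) gives $\log n/(1/\varepsilon+d)\to0$, which is \eqref{eq:wtrac}.

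The main work lies in (5). A naïve application of Theorem \ref{res:ridge_sampling}(i) with an $\varepsilon$-dependent $\alpha$ leaves an unwanted $d$-factor coming from the $\binom{d+s}{s}$ samples needed in each cell. My plan is instead to exploit the embedding $\Rs{\infty,p}\subset\Rs{\alpha,p}$ together with a \emph{single} Taylor polynomial at the origin: for $f(x)=g(a\cdot x)\in\Rs{\infty,p}$ and $x\in\bar B_2^d$ one has $|a\cdot x|\le\|a\|_p\|x\|_{p'}\le 1$ (as $p'\ge2$), so
\[
|f(x)-T_{s,0}f(x)|\le \frac{|a\cdot x|^{s+1}}{(s+1)!}\le\frac{1}{(s+1)!}.
\]
Choosing $s=\lceil c\log(1/\varepsilon)/\log\log(1/\varepsilon)\rceil$ pushes the remainder below $\varepsilon$; the $\binom{d+s}{s}$ Taylor coefficients are recovered from that many function evaluations by finite differences (cf.\ \cite{V2013}), giving $\log n(\varepsilon,d)\lesssim s\log(d+s)\lesssim \log(1/\varepsilon)\log d/\log\log(1/\varepsilon)$, which matches \eqref{eq:qptrac}. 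Finally, (6) is immediate from Theorem \ref{res:ridge_sampling_derivative}(ii) after inverting $(n-d)^{-\alpha}\le\varepsilon/C_\alpha$ to $n(\varepsilon,d)\le d+(C_\alpha/\varepsilon)^{1/\alpha}+1$; the case $p<2$ reduces to $p=2$ via the inclusion $\Rs{\alpha,p,\kappa}\subset\Rs{\alpha,2,\kappa}$.
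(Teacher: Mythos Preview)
Your proposal is correct and matches the paper's proof essentially step for step: items (1)--(4) and (6) are obtained exactly as you describe by inverting the bounds of Theorems~\ref{res:ridge_sampling} and~\ref{res:ridge_sampling_derivative} (the paper packages the inversions for (3)--(4) into two auxiliary lemmas, Lemmas~\ref{logn_upper_bounds} and~\ref{logn_lower_bounds}, but the content is identical, and in (3) it works through the middle-regime bound rather than the third-regime one---both give the same exponential lower bound for $n(\varepsilon_d,d)$ with $\varepsilon_d\asymp d^{-\mu}$). For (5) the paper likewise uses a single Taylor polynomial at the origin, bounding the remainder by $2/s!$ via Lemma~\ref{lem:taylorapprox}(ii) instead of your direct $1/(s+1)!$, and then counts the $\binom{d+s}{s}$ samples needed to recover the coefficients---the same argument.
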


To prove Theorem \ref{res:tractability}, we translate Theorem \ref{res:ridge_sampling} into bounds on the information complexity
\[
 n(\varepsilon,d) = \min \{n \in \N: \; g_{n,d}(\Rs{\alpha,p},L_{\infty}) \leq \varepsilon\}.
\]

\begin{lemma}\label{logn_upper_bounds}
Let $p < 2$ and $\alpha > 0$. Set $\eta = \alpha(1/2 - 1/p') = \alpha(1/\max\{1,p\} - 1/2)$ and define
\begin{align*}
\varepsilon_1^U := C_{p,\alpha} \left[ \frac{\log(1 + d/\log d)}{\log d}\right]^{\eta},
& \qquad \varepsilon_2^U := C_{p,\alpha} \del{\frac 1d}^{\eta}.
\end{align*}
 Then there are positive constants $C_0$ and $C_1$ such that
\begin{align*}
 \log n(\varepsilon,d) \leq C_0 + C_1
 \begin{cases}
  \log d &: \varepsilon_1^U \leq \varepsilon \leq 1,\\
  \log d \cdot (1/\varepsilon)^{1/\eta}&: \varepsilon_2^U \leq \varepsilon < \varepsilon_1^U,\\
  \log(1/\varepsilon) \cdot  (1/\varepsilon)^{1/\eta} &: \varepsilon < \varepsilon_2^U.
 \end{cases}
\end{align*}
The constants depend only on $p$ and $\alpha$.
\end{lemma}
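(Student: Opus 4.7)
The plan is to invert, regime by regime, the three-case upper bound on $g_{n,d}^{\mathrm{lin}}(\Rs{\alpha,p},L_\infty)$ established in Theorem \ref{res:ridge_sampling}(i), and then use the monotonicity $g_{n,d}^{\mathrm{ada}} \le g_{n,d}^{\mathrm{lin}}$ to convert each estimate into an upper bound on $n(\varepsilon,d)$. Throughout, I will apply the elementary inequality $\log\binom{d+s}{s} \le s\log(d+s) \lesssim \log d$ (with $s=\llfloor\alpha\rrfloor$ depending only on $\alpha$) so that all $\binom{d+s}{s}$-factors contribute only $O(\log d)$ terms, harmlessly absorbed into $C_0 + C_1\log d$.

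For the first regime ($\varepsilon \ge \varepsilon_1^U$), I pick $n$ just above $2d\binom{d+s}{s}$, so that $\log n_1 \ge \log d$; the middle case of Theorem \ref{res:ridge_sampling}(i) then evaluates to $g_{n,d}^{\mathrm{lin}} \le C_{p,\alpha}[\log(1+d/\log d)/\log d]^\eta = \varepsilon_1^U \le \varepsilon$, yielding $\log n(\varepsilon,d) \le C_0 + C_1\log d$. For the second regime ($\varepsilon_2^U \le \varepsilon < \varepsilon_1^U$), I stay in the middle case of Theorem \ref{res:ridge_sampling}(i): since $\log(1+d/\log n_1) \le \log(2d)$ whenever $\log n_1 \ge 1$, the choice $\log n_1 = \log(2d)\,(C_{p,\alpha}/\varepsilon)^{1/\eta}$ suffices to drive the bound below $\varepsilon$, producing $\log n(\varepsilon,d) \le C_0 + C_1 \log d\cdot(1/\varepsilon)^{1/\eta}$.

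The interesting case is $\varepsilon < \varepsilon_2^U$. The crucial observation is that $\varepsilon < C_{p,\alpha}d^{-\eta}$ yields the a priori bound $d \le (C_{p,\alpha}/\varepsilon)^{1/\eta}$ on the ambient dimension. I then invoke the third case of Theorem \ref{res:ridge_sampling}(i): the condition $g_{n,d}^{\mathrm{lin}} \le \varepsilon$ is ensured by $\log n \gtrsim (d/\alpha)\log\bigl(C_{p,\alpha}/(\varepsilon d^\eta)\bigr)$, which together with the admissibility constraint $n \ge 2^{d+1}\binom{d+s}{s}$ gives $\log n(\varepsilon,d) \lesssim d\log(1/\varepsilon) + O(d)$. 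Substituting the a priori bound on $d$ and using $\log d \lesssim (1/\eta)\log(1/\varepsilon)$ yields the target bound $\log n(\varepsilon,d) \le C_0 + C_1 \log(1/\varepsilon)(1/\varepsilon)^{1/\eta}$.

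The only genuine obstacle is handling the narrow transition zone around $\varepsilon \asymp \varepsilon_2^U$: there the choice $\log n_1 = \log(2d)(C_{p,\alpha}/\varepsilon)^{1/\eta}$ from the second-regime argument may exceed $d$, violating the applicability of the middle case in Theorem \ref{res:ridge_sampling}(i). Fortunately, in precisely that boundary zone the third-case argument already yields a bound of comparable order---both estimates read $\log n \lesssim d \asymp (1/\varepsilon)^{1/\eta}$ at $\varepsilon \asymp \varepsilon_2^U$---so the two analyses match up, and the transition is absorbed into the final constants by a simple case split on whether the resulting $\log n_1$ stays below $d$.
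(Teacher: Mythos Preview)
Your proposal is correct and follows essentially the same route as the paper's own argument: invert the three-case upper bound of Theorem~\ref{res:ridge_sampling}(i) regime by regime, using $\log\binom{d+s}{s}\lesssim_\alpha \log d$ throughout and the a~priori bound $d\le (C_{p,\alpha}/\varepsilon)^{1/\eta}$ in the small-$\varepsilon$ regime. The only stylistic difference is in the transition zone near $\varepsilon\asymp\varepsilon_2^U$: the paper introduces an explicit auxiliary threshold $\varepsilon_3^U:=2^{-\alpha(d+1)/d}\binom{d+s}{s}^{-\alpha/d}\varepsilon_2^U$ (a bounded multiple of $\varepsilon_2^U$) and reduces the strip $[\varepsilon_3^U,\varepsilon_2^U)$ to the case $\varepsilon<\varepsilon_3^U$ by a rescaling $\varepsilon\mapsto\varepsilon/B$, whereas you argue more informally that both regime bounds are of order $d$ at the boundary and hence patch together after adjusting constants. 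Both treatments are valid.
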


\begin{detail}
\begin{proof}
For auxiliary reasons, let us introduce
\[
 \varepsilon_3^U := 2^{-\alpha(d+1)/d} {d+s \choose s}^{-\alpha/d} \varepsilon_2^U,
\]
as well as the shorthand $\eta = \alpha(1/2 - 1/p') = \alpha(1/\max\{1,p\} - 1/2)$.
\begin{itemize}
\item For $N_1 := 2d{d+s \choose s}$ we have $g_{N_1,d} \leq \varepsilon_1^U$. By monotonicity of information complexity, for all $\varepsilon \geq \varepsilon_1^U$
\[
 \log_2 n(\varepsilon,d) \leq \log_2 n(\varepsilon_1^U,d) \leq \log_2 N_1 \leq 1 + s \log(1+s) + (1+s)\log d.
\] 

\item Consider $\varepsilon_2^U \leq \varepsilon < \varepsilon_1^U$. Write
\begin{align*}
 \varepsilon(n) := C_{p,\alpha} \left[ \frac{ \log_2(1+d/\log_2 n_1) }{\log_2 n_1} \right]^\eta \leq C_{p,\alpha} \left[ \frac{ \log_2(1+d)}{\log_2 n_1} \right]^\eta\\
 \Rightarrow \log n \leq 1 + \log {d+s \choose s} + C_{p,\alpha}^{1/\eta} \log(1+d) \varepsilon(n)^{-1/\eta}. 
\end{align*}
There is a natural $n$ with $N_1 < n,n+1 \leq 2^{d+1} {d+s \choose s} =: N_2$ such that
\[
  \varepsilon(n+1) \leq \varepsilon \leq \varepsilon(n).
\]
By monotonicity of information complexity,
\begin{align*}
 \log_2 n(\varepsilon,d)
 & \leq \log_2 n(\varepsilon(n+1),d) \leq \log_2(n+1) \leq 2 \log_2 n\\
 & \leq 2 \big(1 +s\log(s+1) 
 + s\log(d) 
 + C_{p,\alpha}^{1/\eta} \varepsilon(n+1)^{-1/\eta} \log(1+d) \big)\\
 & \leq 2 + 2s \log(s+1)+ 2(s+2C_{p,\alpha}^{1/\eta})  \varepsilon^{-1/\eta} \log d.
\end{align*}

\item Assume $\varepsilon < \varepsilon_3^U$. Find $n,n+1 > 2^{d+1} {d+s \choose s}$ such that $\varepsilon(n+1) \leq \varepsilon \leq \varepsilon(n)$. Let $\varepsilon(n) := C_{p,\alpha} n^{-\alpha/d} d^{-\eta}$. Since $\varepsilon(n) < \varepsilon^U_3 < \varepsilon_2^U$ we have $d < C_{p,\alpha}^{1/\eta} \varepsilon(n)^{-1/\eta}$, which yields
\begin{align*}
 \log_2 n \leq \alpha^{-1} (\log_2 C_	{p,\alpha} + 1) \log_2(1/\varepsilon(n)) (1/\varepsilon(n))^{1/\eta}.
\end{align*}
By monotonicity of information complexity,
\begin{align*}
 \log_2 n(\varepsilon,d)
 &\leq \log_2 n(\varepsilon(n+1),d) \leq \log_2 (n+1) < 2\log_2 n\\
 &\leq 2\alpha^{-1} (\log_2 C_{p,\alpha} + 1) \log_2(1/\varepsilon(n)) (1/\varepsilon(n))^{1/\eta}\\
 &\leq 2\alpha^{-1} (\log_2 C_{p,\alpha} + 1) \log_2(1/\varepsilon) (1/\varepsilon)^{1/\eta}.
\end{align*}

\item Assume $\varepsilon_3^U \leq \varepsilon < \varepsilon_2^U$. Put $B = 2^{\alpha(d+1)/d} {d+s \choose s}^{\alpha/d}$.  Then
\[
  \varepsilon_3^U/B   \leq \varepsilon/B < \varepsilon_3;
\]
thus
\begin{align*}
 \log n(\varepsilon,d) \leq \log n(\varepsilon/B,d) \leq \alpha^{-1} (\log_2 C_{p,\alpha} + 1) \log_2(B/\varepsilon) (B/\varepsilon)^{1/\eta}.
\end{align*}
The constant $B$ can be bounded from above by a universal constant independent of $d$ and $n$. Hence we get
\[
 \log_2 n(\varepsilon,d) \leq \tilde C_{p,\alpha} \alpha^{-1} (\log_2 C_{p,\alpha} + 1) \log_2(1/\varepsilon) (1/\varepsilon)^{1/\eta}.
\]
\item Set
\begin{align*}
 C_0 &:= 2 + 2s\log(1+s),\\
 C_1 &:= 2\max \big\{ s+1, s+2C_{p,\alpha}^{1/\eta}, \tilde C_{p,\alpha} \alpha^{-1} (\log_2 C_{p,\alpha} + 1) \big\}.
\end{align*}
\end{itemize}
\end{proof}
\end{detail}

\begin{lemma}\label{logn_lower_bounds}
Let $p < 2$ and $\alpha > 0$. Put
\begin{align*}
\varepsilon_1^L := c_{p,\alpha} \left[ \frac{\log(1 + d/\log d)}{\log d}\right]^{\alpha(1/p-1/2)},
& \quad \varepsilon_2^L := c_{p,\alpha} \del{\frac 1d}^{\alpha(1/p-1/2)},
& \varepsilon_3^L := 4^{-\alpha} \varepsilon_2^L.
\end{align*}
Then there are universal constants $c_0$, $c_1$, which depend only on $p$ and $\alpha$, such that
\begin{align*}
 \log n(\varepsilon,d) \geq c_0 + c_1
  (1/\varepsilon)^{\alpha^{-1} (1/p  - 1/2)^{-1}}
\end{align*}
for $\varepsilon_3^L \leq \varepsilon < \varepsilon_1^L$.
\end{lemma}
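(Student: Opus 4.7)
The plan is to invert the lower bounds on adaptive sampling numbers from Theorem~\ref{res:ridge_sampling}(ii) to obtain a lower bound on the information complexity $n(\varepsilon,d)$. Write $\eta = \alpha(1/p - 1/2) > 0$, so that the target exponent is $1/\eta$. Let $n = n(\varepsilon,d)$ and recall that by definition $g^{\mathrm{ada}}_{n,d}(\Rs{\alpha,p},L_\infty) \le \varepsilon$. The proof proceeds by case analysis on the size of $n$ relative to $d$, using in each regime the matching lower bound from Theorem~\ref{res:ridge_sampling}(ii).

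\textbf{Case A: $d \le n < 2^{d-1}$.} The middle-regime bound reads
\[
\varepsilon \;\ge\; c_{p,\alpha}\left[\frac{\log_2\bigl(1 + d/(2 + \log_2 n)\bigr)}{2 + \log_2 n}\right]^{\eta}.
\]
The key observation is that the constraint $n < 2^{d-1}$ forces $2+\log_2 n < d+1 \le 2d$, which gives $1 + d/(2+\log_2 n) > 3/2$ and hence $\log_2\bigl(1+d/(2+\log_2 n)\bigr) \ge \log_2(3/2)$, a positive absolute constant. Substituting this lower bound into the numerator and solving the resulting inequality for $2+\log_2 n$ yields $\log_2 n \ge c'\,\varepsilon^{-1/\eta} - 2$, where $c' > 0$ depends only on $p$ and $\alpha$. (The assumption $\varepsilon < \varepsilon_1^L$ is invoked to rule out $n < d$; indeed, the first regime of Theorem~\ref{res:ridge_sampling}(ii) would otherwise only give the trivial constant bound, and choosing the constant in $\varepsilon_1^L$ no larger than the absolute constant of that regime ensures $n \ge d$.)

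\textbf{Case B: $n \ge 2^{d-1}$.} Here one trivially has $\log_2 n \ge d-1$, and the lower bound on $\varepsilon$ enters through the hypothesis $\varepsilon \ge \varepsilon_3^L = 4^{-\alpha} c_{p,\alpha}\, d^{-\eta}$. Rearranging this inequality gives
\[
d \;\ge\; \bigl(4^{-\alpha} c_{p,\alpha}\bigr)^{1/\eta}\,\varepsilon^{-1/\eta},
\]
and therefore $\log_2 n \ge c''\,\varepsilon^{-1/\eta} - 1$ for an explicit $c'' > 0$ depending only on $p$ and $\alpha$. (This is precisely the reason for the peculiar constant $4^{-\alpha}$ in the definition of $\varepsilon_3^L$: at $\varepsilon = \varepsilon_3^L$ the third regime of Theorem~\ref{res:ridge_sampling}(ii) still forces $n \gtrsim 2^{d-1}$, and one inch further would leave the reach of the ``curse'' part of the bound.)

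Combining the two cases gives $\log_2 n(\varepsilon,d) \ge \min(c',c'')\,\varepsilon^{-1/\eta} - 2$ throughout $[\varepsilon_3^L,\varepsilon_1^L)$; passing from $\log_2$ to $\log$ only rescales the constant, yielding the claim with $c_1 = \min(c',c'')\log 2$ and a suitable additive $c_0$. Conceptually, the main obstacle is just aligning the various absolute constants: the $c_{p,\alpha}$'s appearing in the three terms $\varepsilon_1^L,\varepsilon_2^L,\varepsilon_3^L$ should be interpreted as the exact constants furnished by Theorem~\ref{res:ridge_sampling}(ii) in the respective regimes (and chosen small enough in the first regime to match the absolute constant there). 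No additional mathematical idea beyond Theorem~\ref{res:ridge_sampling}(ii) is needed.
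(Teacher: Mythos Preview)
Your proposal is correct and follows essentially the same approach as the paper: invert the middle-regime lower bound of Theorem~\ref{res:ridge_sampling}(ii) using the key observation that $n<2^{d-1}$ forces $\log_2\bigl(1+d/(2+\log_2 n)\bigr)\ge\log_2(3/2)$. The only organizational difference is that the paper splits cases by the range of $\varepsilon$ (first $[\varepsilon_2^L,\varepsilon_1^L)$, then $[\varepsilon_3^L,\varepsilon_2^L)$, the latter reduced to the former via the monotonicity $n(\varepsilon,d)\ge n(4^\alpha\varepsilon,d)$), whereas you split by the value of $n$ and handle $n\ge 2^{d-1}$ directly from $\log_2 n\ge d-1$ and $\varepsilon\ge\varepsilon_3^L$; both variants are elementary and yield the same constants.
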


\begin{detail}
\begin{proof}[Proof of Lemma \ref{logn_lower_bounds}]
Put $\gamma= \alpha (1/p - 1/2)$.
\begin{itemize}
\item Consider $\varepsilon_2^L \leq \varepsilon < \varepsilon_1^L$. Put
\[
 \varepsilon(n) = c_{p,\alpha} \left[\frac{\log_2(1 +d/(\log_2 n +2)}{\log_2 n + 2}\right]^{\gamma}
\]
for $d  < n \leq 2^{d-1}$. With $1/(\log_2 n + 2) \geq 1/(d+1) \geq 1/(2d)$ we have
\[
 \varepsilon(n) \geq  c_{p,\alpha} \left[\frac{\log_2(3/2)}{\log n + 2}\right]^{\gamma}.
\]
Now choose $d < n,n+1 \leq 2^{d-1}$ such that $\varepsilon(n+1) \leq \varepsilon \leq \varepsilon(n)$. By monotonicity of information complexity, we have
\begin{align*}
 \log_2 n(\varepsilon,d)
 &\geq \log_2 n( \varepsilon(n),d) \geq \log_2 n \geq 1/2 \log_2(n+1)\\
 &\geq -1 + 1/2c_{p,\alpha}^{1/\gamma} \frac{\log(3/2)}{\varepsilon(n+1)^{1/\gamma}} \geq -1 + 1/2c_{p,\alpha}^{1/\gamma} \frac{\log(3/2)}{\varepsilon^{1/\gamma}}.
\end{align*}

\item Assume $\varepsilon_3^L \leq \varepsilon < \varepsilon_2^L$. Then $\varepsilon_2^L \leq 4^{\alpha}\varepsilon$.
\[
 \log_2 n(\varepsilon,d) \geq \log_2 n(4^{\alpha}\varepsilon,d)  \geq -1 + 1/2 c_{p,\alpha}^{1/\gamma} 4^{-\alpha/\gamma} \frac{\log_2(3/2)} {\varepsilon^{1/\gamma}}.
\]

\item Put $c_0 := -1$, $c_1 := 1/2 c_{p,\alpha}^{1/\gamma_{p,\alpha}} 4^{-\alpha/\gamma}$.

\end{itemize}
\end{proof}
\end{detail}

\begin{proof}[Proof of Theorem \ref{res:tractability}]
(1). For $n \leq 2^{d-2}$, the lower bound in Theorem \ref{res:ridge_sampling} gives
\[ g_{n,d}(\Rs{\alpha,2},L_{\infty}) \geq c_{p,\alpha} =: \varepsilon_0.\]
Hence, $n(\varepsilon,d) \geq 2^{d-2}$ for all $\varepsilon < \varepsilon_0$ and we have the curse of dimensionality.

(2). Since $\alpha_1 > \alpha_2$ implies $\Rs{\alpha_1,p} \subseteq \Rs{\alpha_2,p}$, we can w.l.o.g. assume $\alpha \leq 1$. We choose an arbitrary $\varepsilon_2^U \leq \varepsilon \leq 1$. By Lemma \ref{logn_upper_bounds},
\[
 n(\varepsilon,d) \leq 2^{C_0}  d^{ C_1 \varepsilon^{\alpha^{-1} (1/\max\{1,p\} - 1/2)^{-1}}}.
\]
By our assumption $\varepsilon \geq \varepsilon_2^U$, this is true for all natural $d > (C_{p,\alpha} /\varepsilon)^{\alpha^{-1} (1/\max\{1,p\} - 1/2)^{-1}}$. Hence, the curse of dimensionality does not occur. 

(3). Put $\gamma = \alpha (1/p - 1/2)$. Assume $d \to \infty$ and $\varepsilon_3^L \leq \varepsilon< \varepsilon_2^L$. The latter implies
\[
 \left(\frac{c_{p,\alpha}}{4^\alpha}\right)^{1/\gamma} (1/\varepsilon)^{1/\gamma} \leq d < c_{p,\alpha}^{1/\gamma} (1/\varepsilon)^{1/\gamma}.
\]
This yields
\[
 \frac{ \log_2 n(\varepsilon,d) }{ d+1/\varepsilon } \geq \frac{c_0}{d+1/\varepsilon} + c_1 \frac{ (1/\varepsilon)^{1/\gamma}}{ c_{p,\alpha}^{1/\gamma} (1/\varepsilon)^{1/\gamma} + 1/\varepsilon}.
\]
Assuming that $\alpha \leq 1/(1/p - 1/2)$, we have $\gamma \leq 1$ and thus $1/\varepsilon \leq (1/\varepsilon)^{1/\gamma}$. We conclude that
\[
 \frac{ \log n(\varepsilon,d) }{ d+1/\varepsilon } \geq \frac{c_1}{c_{p,\alpha}^{1/\gamma} + 1 } > 0.
\]
Consequently, the problem is not weakly tractable; and thus intractable.

(4). Put $x= 1/\varepsilon + d$. By Lemma \ref{logn_upper_bounds} and $1/\varepsilon\leq x$, $d \leq x$, we have
\begin{align*}
 \log n(\varepsilon,d) \leq C_0 + C_1  \log(x) x^{\alpha^{-1} (1/\max\{1,p\} - 1/2)^{-1}}.
\end{align*}
Now, if $\alpha > \frac{1}{1/\max\{1,p\}-1/2}$, then $\lim_{x\rightarrow \infty}\ x^{-1} \log n(\varepsilon,d) = 0$.

(5). By embedding arguments it is enough to consider the class $\Rs{\infty,2}$. We approximate the function $f \in \Rs{\infty,2}$ via the 
Taylor polynomial $T_{s,0}f(x)$ in $x^0 = 0$. Lemma \ref{lem:taylorapprox}, (ii) gives for every $s\in \N$ the bound
$$
    \|f-T_{s,0}f\|_{\infty} \leq \frac{2}{s!}\,.
$$
Let $\varepsilon>0$ be given and let $s\in \N$ be the smallest integer such that $2/s! \leq \varepsilon$. Then
$(s-1)! \leq 2/\varepsilon$ and therefore $[(s-1)/e]^{s-1} \leq (s-1)! \leq 2/\varepsilon$. This gives
\be\label{f61}
    (s-1)\ln((s-1)/e) \leq \ln(2/\varepsilon)\,.
\ee
We know from \cite{V2013} that it requires $\binom{s+d}{s}$ function values to approximate the Taylor polynomial up to
arbitrary (but fixed) precision. Hence, using
\eqref{f61}, we see that there is a constant $t>0$ such that 
$$
    \ln n(\varepsilon,d) \leq s\ln(e(d+1)) \leq t(1+\ln(1/\varepsilon))(1+\ln d),
$$
which is \eqref{eq:qptrac}.

(6). From Theorem \ref{res:ridge_sampling_derivative} we can immediately conclude $\varepsilon^{-1/\alpha} \lesssim n(\varepsilon,d) \lesssim \varepsilon^{-1/\alpha}$, where the constants behind ``$\lesssim$'' behave polynomially in $d$. Consequently, sampling of ridge functions in $\Rs{\alpha,2,\kappa}$ is polynomially tractable.
\end{proof}

By Lemma \ref{emb}, we know that $\Rs{\infty,2}$ is a subclass of the unit ball in $C^{\infty}(\Omega)$. Besides,  we know that approximation using function values is quasi-polynomially tractable in $\Rs{\infty,2}$, see Theorem \ref{res:tractability}. What is the respective tractability level in $C^{\infty}(\Omega)\,$? Or, to put it differently: how much do we gain by imposing a ridge structure in $C^{\infty}(\Omega)\,$? The seminal paper \cite{NW_2009_2} tells us that approximation in $C^{\infty}([0,1]^d)$ suffers from the curse of dimensionality when norming the space in the way as we did in \eqref{f62}. In contrast, we will show that sampling in $C^{\infty}(\Omega)$ is still weakly tractable. This is not too much of a surprise: due to the concentration of measure phenomenon, the Euclidean unit ball's volume is getting ``very small'' in high dimensions $d$; its measure scales like $(2\pi e/d)^{d/2}$. Anyhow, the result suggests that one still benefits from supposing a ridge structure; infinitely 
differentiable ridge functions from $\Rs{\infty,2}$ probably can be approximated easier than general functions from the unit ball of $C^{\infty}(\Omega)$. This is not guaranteed, however, because we do not show that one cannot get anything better than weak tractability for the sampling of functions in the unit ball of $C_{\infty}(\Omega)$.

\begin{theorem}\label{res:infty} \label{wtrinf} The sampling problem for $C^{\infty}(\Omega)$, where the error is measured in $L_{\infty}(\Omega)$, is weakly tractable. 

\end{theorem}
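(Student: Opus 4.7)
The plan is to reduce the problem to a calibration exercise by approximating any $f$ in the closed $C^\infty$-unit ball via its Taylor polynomial at the origin. The Lagrange form of the multivariate Taylor remainder, together with $\|D^\gamma f\|_\infty \leq 1$ for every $\gamma$ and $\|x\|_1 \leq \sqrt d\,\|x\|_2 \leq \sqrt d$ on $\Omega = \bar B_2^d$, gives
\[
 |f(x) - T_{s,0}f(x)| \;\leq\; \frac{\|x\|_1^{s+1}}{(s+1)!} \;\leq\; \frac{d^{(s+1)/2}}{(s+1)!}, \qquad x \in \Omega,
\]
for every $s \in \N$. By \cite{V2013}, the polynomial $T_{s,0}f$ can be reconstructed from $\binom{d+s}{s}$ function values (up to arbitrary precision via finite-order differences). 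Consequently $n(\varepsilon,d) \leq \binom{d+s}{s}$ as soon as $s$ is chosen large enough that $d^{(s+1)/2}/(s+1)! \leq \varepsilon$.

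The key quantitative step is to calibrate $s$ so as to control both factors. Using Stirling's bound $(s+1)! \geq ((s+1)/e)^{s+1}$, the condition $s+1 \geq 2e\sqrt d$ turns the remainder into at most $2^{-(s+1)}$, so that additionally requiring $s+1 \geq \log_2(1/\varepsilon)$ forces it below $\varepsilon$. Hence, setting
\[
 s = s(\varepsilon,d) := \max\bigl\{\lceil 2e\sqrt d\,\rceil,\ \lceil \log_2(1/\varepsilon)\rceil\bigr\}
\]
is admissible, and together with the standard bound $\binom{d+s}{s} \leq (e(1+d/s))^s$ one obtains $\log n(\varepsilon,d) \leq s\,(1 + \log(1 + d/s))$.

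It remains to verify \eqref{eq:wtrac} by splitting into two regimes. If $\sqrt d \geq \log_2(1/\varepsilon)$, then $s \asymp \sqrt d$ and the previous bound yields $\log n(\varepsilon,d) \lesssim \sqrt d \log d$, hence
\[
 \frac{\log n(\varepsilon,d)}{1/\varepsilon + d} \;\lesssim\; \frac{\sqrt d \log d}{d} \;=\; \frac{\log d}{\sqrt d} \;\longrightarrow\; 0.
\]
If, on the other hand, $\sqrt d < \log_2(1/\varepsilon)$, then $s \asymp \log(1/\varepsilon)$ and $d/s \lesssim \log(1/\varepsilon)$, so $\log n(\varepsilon,d) \lesssim \log(1/\varepsilon)\,\log\log(1/\varepsilon)$ and
\[
 \frac{\log n(\varepsilon,d)}{1/\varepsilon + d} \;\lesssim\; \varepsilon\,\log(1/\varepsilon)\,\log\log(1/\varepsilon) \;\longrightarrow\; 0.
\]
Combining both regimes proves $\log n(\varepsilon,d)/(1/\varepsilon + d) \to 0$ as $1/\varepsilon + d \to \infty$, i.e.\ weak tractability.

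The only real subtlety lies in the choice of $s$: it must simultaneously beat Stirling (to keep the Taylor error below $\varepsilon$) and stay small enough that $\binom{d+s}{s}$ remains subexponential in both $d$ and $1/\varepsilon$. The additive split $s \asymp \sqrt d + \log(1/\varepsilon)$ accomplishes exactly this, and the subsequent dichotomy above handles the two asymptotic directions in the definition of weak tractability in a uniform way.
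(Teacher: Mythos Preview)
Your proof is correct and follows essentially the same route as the paper's: approximate by the Taylor polynomial at the origin, bound the remainder via $\|x\|_1 \leq \sqrt d$ on $\bar B_2^d$ and Stirling, recover $T_{s,0}f$ from $\binom{d+s}{s}$ samples via \cite{V2013}, and then split into the two regimes $s\asymp\sqrt d$ versus $s\asymp\log(1/\varepsilon)$. The only cosmetic difference is that you fix $s$ explicitly as a maximum, whereas the paper defines $s$ implicitly as the least integer making the remainder $\leq\varepsilon$ and then derives the same dichotomy from \eqref{f65}.
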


\begin{proof} Applying Lemma \ref{lem:taylorapprox}, (i) together with \eqref{f60} we obtain for any $f\in C^{\infty}(\Omega)$ with 
$\|f\|_{C^{\infty}(\Omega)}\leq 1$ and every $s\in \N$ the relation
\begin{align}\nonumber
    |f(x)-T_{s,0}f(x)| &\leq \frac{2}{(s-1)!}\|x\|_1^s\quad,\quad x\in \Omega\,,\\\nonumber
    &\leq \frac{2d^{s/2}}{(s-1)!} \,.
\end{align}
Let $s\in \N$ be the smallest integer such that $2d^{s/2}/(s-1)! \leq \varepsilon$. This leads to
$$
   \frac{1}{\sqrt{d}}\Big(\frac{s-2}{e\sqrt{d}}\Big)^{s-2} \leq \frac{(s-2)!}{d^{\frac{s-1}{2}}} \leq \frac{2}{\varepsilon}\,
$$
which implies 
\be\label{f65}
    (s-2)\ln\Big(\frac{s-2}{e\sqrt{d}}\Big) \leq \ln(2/\varepsilon)+\frac{1}{2}\ln(d)\,.
\ee
To approximate the Taylor polynomial $T_{s,0}f$ with arbitrary precision (uniformly in $f$) we need $\binom{d+s}{s}$ function values, see
\cite[p.\ 4]{V2013}. Let us distinguish two cases. If $(s-2) \leq e^2\sqrt{d}$ we obtain 
$$
  \ln n(\varepsilon,d) \leq s\ln(e(d+1)) \leq (e^2\sqrt{d}+2)\cdot\ln(e(d+1))
$$
and hence \eqref{eq:wtrac}\,. If $s-2 > e^2\sqrt{d}$ then \eqref{f65} yields $s-2 \leq \ln(2/\varepsilon)+\ln(d)$. Thus,
$$
  \ln n(\varepsilon,d) \leq s\ln(e(d+1)) \leq (\ln(2/\varepsilon)+\ln(d)+2)\cdot \ln(e(d+1))
$$
and again \eqref{eq:wtrac} holds true. This establishes weak tractability. 
\end{proof}

\begin{remark} {(i)} The result in Theorem \ref{wtrinf} is also a consequence of the arguments in  \cite[Sections 5.2,
5.3, and Section 6]{HiNoUlWo13} by putting $L_{j,d} = d^{j/2}$. \\
{(ii)} Recently, Vyb\'iral \cite{V2013} showed that there is quasi-polynomial tractability if one replaces the classical norm $\sup_{ \gamma \in \N_0^d} \|D^{\gamma} f\|_{\infty}$ by $\sup_{k\in \N_0}\sum_{|\gamma| = k}\|D^{\gamma}f\|_{\infty}/\gamma !$ in $C^{\infty}([0,1]^d)$. In contrast to that, Theorem \ref{res:infty} shows weak tractability for the classical norm on the unit ball. 

\end{remark}

\textbf{Acknowledgments}
The authors would like to thank Aicke Hinrichs, Erich Novak, and Mario Ullrich for pointing out relations to the paper \cite{HiNoUlWo13}, as well as Sjoerd Dirksen, Thomas K\"uhn, and Winfried Sickel for useful comments and discussions.
The last author acknowledges the support by the DFG Research Center {\sc Matheon} ``Mathematics for key technologies'' in Berlin.


\begin{thebibliography}{00}

\bibitem{BP} M. D. Buhmann and A. Pinkus. Identifying linear combinations of ridge functions. Adv. in Appl. Math. 22, 1999, no. 1, 103--118.

\bibitem{BvG} P. B\"uhlmann and S. van de Geer. Statistics for high-dimensional data. Springer, Heidelberg, 2011.


\bibitem{C2} E. J. Cand\'es. Harmonic analysis of neural networks. Appl. Comput. Harmon. Anal. 6, 1999, no. 2, 197--218.

\bibitem{CD} E. J. Cand\'es and D. L. Donoho. Ridgelets: a key to higher-dimensional intermittency?.
Philos. Trans. R. Soc. Lond. Ser. A Math. Phys. Eng. Sci. 357, 1999, no. 1760, 2495--2509.


\bibitem{CaSt90} B. Carl and I. Stefani. Entropy, compactness and the approximation of operators. Cambridge Tracts in Mathematics,
vol. 98, Cambridge University Press, Cambridge, 1990.


\bibitem{CT} V. Cevher and H. Tyagi. Active learning of multi-index function models. NIPS (The Neural Information Processing Systems), Lake Tahoe, Reno, Nevada, 2012.



\bibitem{CDDKP} A. Cohen, I. Daubechies, R. A. DeVore, G. Kerkyacharian, and D. Picard. Capturing ridge functions in high dimensions from point queries. Constr. Approx. 35, 2012, no. 2, 225--243.

\bibitem{CDMR} J.\ Creutzig, S.\ Dereich, T.\ M\"uller-Kronbach, and K.\ Ritter. Infinite-dimensional quadrature and approximation of distributions. Foundations of Computational Mathematics 9.4, 2009, 391--429.

\bibitem{CZ} F. Cucker and D.-X. Zhou. Learning theory: an approximation theory viewpoint. Cambridge Monographs on Applied and Computational Mathematics, vol. 24, Cambridge University Press, Cambridge, 2007.

\bibitem{DL} R. A. DeVore and G. G. Lorentz. Constructive approximation. Springer, Berlin, 1993.

\bibitem{EdTr96} D.E. Edmunds and H. Triebel. Function Spaces, Entropy Numbers, Differential Operators. Cambridge Tracts in
Mathematics, vol. 120, Cambridge University Press, Cambridge, 1996.

\bibitem{FHKS} H. J. Flad, W. Hackbusch, B. N. Khoromskij, and R. Schneider. Concepts of data-sparse tensor-product approximation in many-particle modeling. In Matrix Methods: Theory, Algorithms and Applications, ed. V. Olshevsky and E. Tyrtyshnikov, World Scientific, 2010.

\bibitem{FPRU10} S. Foucart, A. Pajor, H. Rauhut, and T. Ullrich. The Gelfand widths of lp-balls for $0 < p \leq 1$. J. Complexity, 26:629--640, 2010.

\bibitem{FSV12} M. Fornasier, K. Schnass and J. Vyb\'iral. Learning functions of few arbitrary linear parameters in high dimensions.
Found. Comput. Math. 12 (2), 2012, 229--262.

\bibitem{FrSt81} J. H. Friedman and W. Stuetzle. Projection Pursuit Regression. Journal of the American Statistical Association, Volume 76, Issue 376, 1981, 817 -- 823.

\bibitem{G} G. K. Golubev. Asymptotically minimax estimation of a regression function in an additive model. Problemy Peredachi Informatsii 28, 1992, 101--112.

\bibitem{GS} R.\ Graham and N.\ Sloane. Lower bounds for constant weight codes. IEEE Trans.\ Inform.\ Theory, 26(1):37--43, 1980.


\bibitem{HTF} T. Hastie, R. Tibshirani, and J. Friedman. The elements of statistical learning. Springer, New York, 2001.

\bibitem{HiNoUlWo13} A. Hinrichs, E. Novak, M. Ullrich, and H. Wo\'zniakowski. The curse of dimensionality for numerical integration
of smooth functions II. To appear in J. Compl.

\bibitem{HiMa13} A. Hinrichs, S. Mayer. Entropy numbers of spheres in Banach and quasi-Banach spaces. Work in progress.

\bibitem{HJS} M. Hristache, A. Juditsky, and V. Spokoiny. Direct estimation of the index coefficient in a single-index model.
Ann. Statist. 29, 2001, no. 3, 595--623.




\bibitem{LS75} B.\ P.\ Logan and L.\ A.\ Shepp. Optimal reconstruction of a function from its projections. Duke Math.\ J.\ 42, 1975, 645--659.

\bibitem{lomavo96} G.\ Lorentz, M.\ von Golitschek, and Y.\ Makovoz. Constructive Approximation: Advanced Problems. Volume 304 of {G}rundlehren der {M}athematischen {W}issenschaften, {S}pringer, Berlin, 1996.

\bibitem{Ku01} T. K\"uhn. A lower estimate for entropy numbers. J.\ Approx.\ Theory 110 (1), 2001, 120--124.


\bibitem{NW1} E.\ Novak and H.\ Wo\'zniakowski. Tractability of Multivariate Problems, Volume I: Linear Information.
EMS Tracts in Mathematics, Vol. 6, Eur. Math. Soc. Publ. House, Z\"urich, 2008.

\bibitem{NW_2009_2} E.\ Novak and H.\ Wo\'zniakowski. Approximation of infinitely differentiable multivariate functions is intractable. J.\ Compl.\ 25, 2009, 398--404.

\bibitem{NW2} E. Novak and H. Wo\'zniakowski. Tractability of Multivariate Problems, Volume II: Standard Information for Functionals.
EMS Tracts in Mathematics, Vol. 12, Eur. Math. Soc. Publ. House, Z\"urich, 2010. 

\bibitem{PT} S.\ Paskov and J.\ Traub. Faster evaluation of financial derivatives. Journal of Portfolio Management 22, 1995, 113--120.

\bibitem{Pinkus:ApproxRidge} A.\ Pinkus. Approximating by ridge functions. Surface Fitting and Multiresolution Methods, 1997, 279--292.

\bibitem{Pinkus:NeuralNetworks} A.\ Pinkus. Approximation theory of the MLP model in neural networks. Acta Numerica 8.1, 1999, 143--195.


\bibitem{RWY} G.\ Raskutti, M.\  J.\ Wainwright, and B.\ Yu. Minimax-optimal rates for sparse additive models over kernel classes via convex programming. J.\ Mach.\ Learn.\ Res.\ 13, 2012, 389--427.

\bibitem{SPSSW} B. Sch\"olkopf, J. C. Platt, J. Shawe-Taylor, A. J. Smola, and R. C. Williamson. Estimating the support of a high-dimensional distribution.
Neural Comput. 13, 2001, no. 7, 1443--1471.

\bibitem{Sch84} C. Sch\"utt. Entropy numbers of diagonal operators between symmetric Banach spaces. J.\ Approx. Theory 40, 1984, 121--128.

\bibitem{Schwab11} C.~Schwab and C.~J.~Gittelson. Sparse tensor discretizations of high-dimensional parametric and stochastic PDEs. Acta Numerica 20, 2011, 291--467.

\bibitem{Tr97} H.~Triebel. Fractals and Spectra. Birkh\"auser, Basel, 1997. 



\bibitem{TWW} J.~Traub, G.~Wassilkowski, and H.~Wozniakowski. \emph{ Information-Based Complexity}. Academic Press, New York, 1988.

\bibitem{V2013} J.~Vyb\'iral. Weak and quasi-polynomial tractability of approximation of infinitely differentiable functions. To appear in J.\ Compl.


\end{thebibliography}
\end{document}